\documentclass[11pt]{article}

\usepackage{layout, graphicx, amsmath, amsthm, amssymb, euscript, enumerate, bbold, bbm, graphicx, fancyhdr,xcolor,mathtools, tikz,url,mathrsfs}
\usepackage{float}
\usepackage{subcaption}
\usetikzlibrary{arrows,shapes}
\usepackage{verbatim}
\definecolor{lila}{RGB}{150,51,255}
\definecolor{verdesi}{RGB}{0,153,0}
\usepackage{yfonts} 
\usetikzlibrary{calc,trees,positioning,arrows,chains,shapes.geometric,%
	decorations.pathreplacing,decorations.pathmorphing,shapes,%
	matrix,shapes.symbols,plotmarks,decorations.markings,shadows}
\usepackage{caption}
\usepackage{subcaption}

\usepackage[pdftex,bookmarks,colorlinks]{hyperref}
\hypersetup{colorlinks=false}

\makeatletter                                         
\def\th@newremark{\th@remark\thm@headfont{\bfseries}}  
\makeatletter                                          

\theoremstyle{newremark}                               

\newtheorem{theorem}{Theorem}[section]
\newtheorem{corollary}[theorem]{Corollary}

\newtheorem{lemma}[theorem]{Lemma}
\newtheorem{proposition}[theorem]{Proposition}
\newtheorem{remark}[theorem]{Remark}

\newtheorem{notation}{Notation}
\DeclareMathOperator*{\esssup}{ess\,sup}
\DeclareMathOperator*{\argmin}{arg\,min}

\definecolor{burgundy}{rgb}{0.5, 0.0, 0.13}
\definecolor{linkblue}{RGB}{0,20,128}
\definecolor{linkred}{RGB}{128, 0, 6}
\definecolor{citegreen}{RGB}{46, 126, 42}
\definecolor{urlmagenta}{RGB}{138, 0, 135}
\hypersetup{
	linkcolor  = linkblue,
	citecolor  = citegreen,
	urlcolor   = urlmagenta,
	colorlinks = true,
}
\makeindex
\begin{document}
	\title{Microscopic Weak Selection Principle for the Logistic Branching Brownian Motion with selection}
	
	\author{%
		F.~E. Bravo Lozano\thanks{Facultad de Ciencias,  
			UNAM, M\'exico. Email: {frank.bravo@ciencias.unam.mx}}
		\and 
		M.~C. Fittipaldi\thanks{Facultad de Ciencias, UNAM, Mexico. Email: {mcfittipaldi@ciencias.unam.mx} }}
\maketitle		
		\begin{abstract}
			In this work, we present the \emph{logistic branching Brownian motion with selection} (Log-BBM), a modification of the $N$-BBM defined by Groisman et. al (2020), in which birth and competition events are decoupled to allow for a variable population size that follows the branching process with logistic growth defined by Lambert (2005). We study the representation of the Log-BBM as a microscopic model for a FKPP-type equation. In the large population limit, the renormalised empirical measure of the Log-BBM converges weakly to a probability measure whose density solves a nonlocal version of the FKPP equation, while its  cumulative distribution function solves the classical FKPP equation. We also show that this model exhibits behaviour that is characteristic of the Brunet–Derrida family of systems, in particular the so-called \emph{weak selection principle}. Indeed, we show that, in the Log-BBM, the particles select the minimal propagation speed, in contrast to the FKPP equation, where the selected front speed depends on the initial condition.
		\end{abstract}

\textit{2010 Mathematics Subject Classification}. 82C44, 60K35, 60G70.

\textit{Key words and phrases.} Branching-selection, competition, particle systems, velocity, F-KPP equation. 
		
\section{Introduction}
The study of branching processes, a term originally coined by A. N. Kolmogorov and N. A. Dimitriev \cite{KolmogorovDmitriev1947} in 1947 to describe stochastic processes arising when probability theory is applied to population mathematics, originated from the work of Bienaym\'e \cite{Bienayme1845}, and Galton and Watson\cite{GaltonWatson1874}. However, it was not until 1958 that Sevast’yanov \cite{Sevastyanov1958} considered a population in which members diffuse according to a Gaussian process within a region bounded by an absorbing boundary. In the early 1960s,  Adke and Moyal \cite{Adke1963}, seeking to describe both growth and dispersal of a population in an unrestricted linear habitat, removed the boundary constraint and instead introduced deaths at a constant rate. Skorokhod \cite{Skorokhod1964} further generalized this framework by allowing a more general diffusion for the particles. While all of these earlier processes can be regarded as particular instances of what is now called {\it branching Brownian motion (BBM)}, the term itself appeared for the first time in \cite{Ikeda1968a}.

The dynamics of branching Brownian motion can be described as follows. A single particle starts at position 
$x \in \mathbb{R}$ at time $t=0$ and evolves according to a standard Brownian motion. The particle carries an exponential clock with rate 
$\beta > 0$. When the clock rings, the particle reproduces into 
$k$ particles (the offspring) with probability 
$p(k)$. Each offspring then begins an independent standard Brownian motion from its birth site equipped with its own exponentially distributed clock with the same rate $\beta$, and the process continues recursively.

The discrete (in space and time) analogue of BBM is the \emph{branching random walk (BRW)}. In this model, the population size evolves according to a discrete-time Galton–Watson process, and 
particles move independently on the lattice according to a prescribed random walk. The study of both models has developed significantly and both serve as examples of tree-indexed Markov processes. Interest in BBM grew considerably after McKean \cite{McKean1975} established a fundamental connection with a reaction–diffusion equation in the mid-1970s. Beyond ongoing research related to McKean’s link, the process has since been extensively studied as a probabilistic object.


A natural modification of the original BBM is to introduce interactions between particles. In this direction, Brunet and Derrida \cite{Brunet1997,Brunet1999,Brunet2001} proposed a discrete-time model with fixed population size $N$. The dynamics are as follows: consider $N$ particles moving on the line, with positions $Y_1(t), \ldots, Y_N(t)$. For each particle $i \in [1,N]$, two indices $j_i$ and $j'_i$ are chosen at random from among the $N$ particles (possibly including $i$ itself). The position of particle $i$ at time $t+1$ is then given by
\begin{equation*}
	Y_i(t+1) = \max\left[Y_{j_i}(t)+\alpha_i,Y_{j'_i}(t)+\alpha_i'\right],  
\end{equation*}
where $\alpha_i$ and $\alpha'_i$ are independent Bernoulli random variables with parameter $p$. 
For this model, Brunet and Derrida conjectured that as $N$ goes to infinity, the function $h(t,x)$ which represents the fraction of particles to the right of $x$ at time $t$, evolves according to a reaction–diffusion equation. They further used this observation, empirically, to investigate the microscopic fluctuations of the moving wavefront, which led to a series of conjectures about the behavior of the model. In particular, these conjectures are expected to hold for a broad class of processes now referred to as \emph{Brunet–Derrida particle systems}.
In a later work, these authors introduced a number of models (some previously known and others newly proposed) that belong to this family (see \cite{Brunet2006}). One example is the \emph{coalescing BBM}, in which particles, in addition to branching, coalesce at a small rate $\varepsilon^2$ during their intersection local time. Two further examples are the \emph{$N$-BBM}, first rigorously defined and studied in \cite{Maillard2016}, and the \emph{$L$-BBM}, first studied in \cite{Pain2016}. In both models, particles evolve as in standard BBM but they are also subject to an additional selection mechanism. In the former model, the leftmost particle is removed at every branching event. In the latter model, particles are removed whenever they fall more than a distance $L$ behind the rightmost particle.

Groisman et. al \cite{Groisman2020} defined a continuous generalization of the Brunet-Derrida model that can be viewed as a modification of the $N$-BBM.  In their version, the population size remains fixed according to the following dynamics: at rate one, each particle randomly selects another particle, and the particle in the lower position jumps over the other particle. 
Later, Groisman and Soprano-Loto \cite{Groisman2021} generalized this model by considering more general interaction rates and jump mechanisms. This allows for a broader class of fixed-size particle systems with selection dynamics.

In the above‑described setting, it is natural to ask what happens when branching and competition do not occur simultaneously and, consequently, the population size is no longer fixed. In this case, particles would branch, and each pair would compete at a logistic rate, meaning there would be a quadratic rate of deaths in the system. This underlying branching structure resembles the one proposed by Lambert \cite{Lambert2005} for the \emph{branching process with logistic growth} (LB-process). 
In the present work we study a modification of the $N$-BBM with the following dynamics. We start with an initial number of particles, the count of which follows the stationary distribution given in \cite{Lambert2005} in the absence of natural death. Each particle then moves independently as a Brownian motion. At rate one, each particle reproduces, and its offspring start from the position of their parent and continue to move independently as Brownian motions. In addition, at rate $c$, each particle selects another particle uniformly at random, and the particle with the leftmost position is removed from the system. We will call this process the {\bf Logistic Branching Brownian Motion with selection} ({\bf Log-BBM} for short). The formal definition of this process and its construction using a coupling with the BBM are given in Section \ref{Log}.  

\subsection{Probabilistic representations of KPP-type equations via branching particle systems}\label{nexus}
In particular, we are interested in describing the connection between the Log-BBM and KPP-type equations. Fisher
\cite{Fisher1937} introduced the so-called FKPP equation  while describing the dynamic of a population in which a mutation occurs. 
In this scenario, individuals with the new trait have a survival advantage. In the same year Kolmogorov et. al \cite{Kolmogorov1937} arrived at a variation of the FKPP equation under similar conditions. 
The authors gave a detailed analysis of the existence of traveling wave solutions for a general version of the equation studied by Fisher, given by
\begin{equation}\label{eq: fkpp}
	\partial_t u = k\partial_{xx}u + H(u),
\end{equation}
whose forcing term $H$ is assumed to be in $C^1([0,1])$ and satisfy $H(0)=H(1)=0, H(u)> 0$ for $0<u<1$, and  $H'(0)=1, H'(u)\leq 1$ for $0<u\leq 1$. In particular, the FKPP equation is a semilinear equation, where diffusion is coupled with logistic growth throughout the reaction term, namely $H(u) = m u (1-u)$. In this probabilistic setting, it is a convention to take $k = 1/2$ and consider the transformation $u \to 1 -u$. 

The KPP family of equations emerges as the macroscopic limit of the underlying microscopic dynamics or through duality relations in a wide variety of branching particle systems, which is why there is so much interest in the probability literature.
The first connection between the branching Brownian motion and the Kolmogorov equation is attributed to McKean \cite{McKean1975}. 
Consider $\mathbf{X}(t) = (X_u(t), u\in\mathcal{N}_t)$ the original BBM with reproduction law $(p(k))_{k\geq 0}$ and branching rate $\beta$. McKean proved that the solutions of the Kolmogorov equation with reaction term $H(u) = \beta(f(u)-u)$ (where $f(u)$ is the generating function of the offspring distribution) and initial condition $g(x)$ can be written as
\[
u(t,x) = \mathbb{E}\bigg[\prod_{v\in\mathcal{N}_t}g(x - X_v(t))\bigg].
\]
In particular, in the \emph{dyadic case}, when $p(2)=1$, the solution when $g(x) = \mathbb{1}_{\{x>0\}}$ is exactly the cumulative distribution function of the position of the rightmost particle, namely 
\begin{align*}
	u(t,x) = \mathbb{P}\Big(\max_{v\in\mathcal{N}_t}X_v(t)<x\Big).
\end{align*} 
On the other hand, if we consider the BBM starting with $N$ particles and take its (normalized) empirical measure, i.e. 
\begin{align*}
	\mu_t^N:=\frac{1}{N}\sum\limits_{v\in\mathcal{N}_t}\delta_{X_u(t)},    
\end{align*}
then, as the initial number of particles goes to infinity, the measure $(\mu_t^N)_{t\geq 0}$ converges weakly to a measure $(\mu_t)_{t\geq 0}$ whose density $u(t,\cdot)$ solves the linear Kolmogorov equation (see \cite{Beckman2019}). In the same fashion, for the $N$-BBM, the selection dynamics generate a free boundary at the position of the leftmost particle, which is described by a stochastic process $(L_t)_{t\geq 0}$. In this case, the pair $(u(t,\cdot),L_t)$ satisfies a free‑boundary problem in which the governing equation is the linear Kolmogorov equation (see \cite{DeMasi2019}). 

Meanwhile, for the process defined in \cite{Groisman2020}, the authors showed that the empirical distribution of the system (i.e. $\mu_t^N((-\infty,x])$) converges to the solution of the FKPP equation. 
For the generalizations proposed in \cite{Groisman2021}, they proved a similar result; however, in these cases, the reaction term is determined by the specific jump mechanisms introduced in the model.

In line with some of the cited works, in Section \ref{hydrodynamic} we prove a hydrodynamic limit for our model. Specifically, we show that, in the large‑population limit, the empirical measure converges weakly to a measure whose density function satisfies a nonlocal version of the FKPP equation. Before stating our result, we introduce the necessary notation.

Let $K\in\mathbb{N}$ be a scale parameter to the competition rate. We set $c_K:=cK^{-1}$, which tends to zero when $K$ tends to infinity. 
For each $K\in\mathbb{N}$, let $\mathbf{X}^K=(\mathbf{X}^K(t))_{t\geq 0}$ be the (rescaled) Log-BBM($1, c_K$), given by $\mathbf{X}^K(t)=\{X^K_u(t), u \in \mathcal{N}^K(t)\}$, where the  initial number of particles is given by the stationary distribution of the LB-process.  Let 
\begin{equation}\label{eq: defemp0}
	\mu^K_t (\cdot) := \frac{1}{m_K} \sum\limits_{u \in \mathcal{N}^K_t} \delta_{X^K_u(t)}(\cdot), \qquad t\geq 0,
\end{equation}
be the empirical measure of the system $\boldsymbol{X}^K$, where $m_K$ is the mean of the stationary distribution in the number of particles.

\begin{notation}\label{not}
	In what follows, we denote by $M_F(\mathbb{R})(\text{resp. }M^v_F(\mathbb{R})$) the space of positive and finite measures on $\mathbb{R}$ endowed with the weak (resp. vague) topology. We denote by $D\big([0,T], S\big)$ the space of \emph{c\`adl\`ag} functions with values in $S$, which can be $M_F(\mathbb{R})$,  $M^v_F(\mathbb{R})$ or $\mathbb{R}$. Likewise, we denote by $C^{1,2}_b(\mathbb{R}_+\times \mathbb{R})$ (resp.  $C^{1,2}_0(\mathbb{R}_+\times\mathbb{R})$, $C^{1,2}_c(\mathbb{R}^+\times\mathbb{R})$) the space of bounded (resp. vanishing at infinity, of compactly supported) functions which are continuously differentiable up to the order $2$ in the space variable and continuously differentiable in the time variables with bounded partial derivatives.    
\end{notation}

We can now establish the following result, whose proof can be found in Section \ref{hydrodynamic}.
\begin{theorem}\label{thm: convemp}
	For each $T>0$, the sequence of probability measures $\{\mu^K\}_{K\in\mathbb{N}}$ defined by \eqref{eq: defemp0} converges in law in $D\left([0,T], M_F(\mathbb{R})\right)$ to a continuous deterministic function $\mu:=(\mu_t)_{t\in[0,T]}$. In particular, the measure-valued function $\mu$ is the unique solution of the following PDE, written in its weak form
	\begin{equation}\label{eq: eqmu0}
		\left<\phi, \mu_t\right> = \left<\phi, \mu_0\right> + \int_0^t\int_{\mathbb{R}}\Bigg[\partial_t \phi(s,x) + \frac{1}{2}\partial_{xx} \phi(s,x)
		+ \phi(s,x) \left(1- 2 \int_x^{\infty} \mu_s(dy)\right)\Bigg]\mu_s(dx)ds,
	\end{equation}
	for all bounded functions $\phi(t,x) \in C^{1,2}_b(\mathbb{R}_+\times\mathbb{R})$.
	
	Moreover, if $\mu_0$ has a density with respect to the Lebesgue measure, then for every $t \in [0,T]$ the finite measure $\mu_t$ has a density $u$ with respect to the Lebesgue measure, such that $\mu_t(dx) = u(t,x)dx$, and $u$ is a weak solution of the (nonlocal and nonlinear) partial integro-differential equation
	\begin{equation}\label{eq: densitymu0}
		\begin{split}
			\partial_t u(t,x)=  \frac{1}{2}\partial_{xx} u(t,x)
			+ u(t,x)\Big(1 - 2  \int_x^{\infty} u(t,y)dy\Big)\
		\end{split} .
	\end{equation}
\end{theorem}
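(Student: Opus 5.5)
The plan is the classical tightness--identification--uniqueness scheme for measure-valued processes (in the style of Fournier--M\'el\'eard).

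\textbf{Semimartingale decomposition.} For $\phi\in C^{1,2}_b$, apply the generator of the Log-BBM$(1,c_K)$ to $\langle \phi(t,\cdot),\mu^K_t\rangle$. Branching at rate one adds a copy of $\phi(X_u)$. For competition, each particle picks another at rate $c_K$ and the leftmost one is removed. A pair $\{u,v\}$ with $X_u<X_v$ therefore removes $u$ at total rate $2c_K$. This gives the drift
\[
\frac{1}{m_K}\sum_u\Big[\partial_t\phi+\tfrac12\partial_{xx}\phi+\phi\Big](X_u)-\frac{2c_K}{m_K}\sum_u \phi(X_u)\,\#\{v: X_v>X_u\}.
\]
The second term equals $\int\phi(x)\,2c_Km_K\,\mu^K_t((x,\infty))\,\mu^K_t(dx)$. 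The scaling must satisfy $c_K m_K\to1$; indeed, for Lambert's stationary law without natural death the mean is of order $K/c$. Write $c_Km_K=1+o(1)$, with any constant absorbed as in \eqref{eq: eqmu0}. Ties have probability zero because the particles are Brownian.

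The martingale part has quadratic variation of order $\frac{1}{m_K}\int_0^t\langle \phi^2+(\partial_x\phi)^2,\mu^K_s\rangle(1+c_Km_K\langle1,\mu^K_s\rangle)\,ds$. This vanishes as $K\to\infty$ once moments are controlled.

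\textbf{Moment bounds and tightness.} Use the coupling with BBM from Section~\ref{Log}: the Log-BBM is dominated by a BBM. Hence $\mathbb E\sup_{t\le T}\langle1,\mu^K_t\rangle^p\le C_T$ for $p=1,2$, using the stationary initial law, whose normalized moments are bounded. Alternatively, the logistic drift gives these bounds directly. With this domination, apply Aldous--Rebolledo to $\langle\phi,\mu^K\rangle$ for $\phi$ in a dense countable set of $C^2_b$. This gives tightness in $D([0,T],M^v_F)$. To upgrade to the weak topology (Méléard--Roelly), also check that the total mass $\langle1,\mu^K\rangle$ is tight and has no mass loss. This follows by using $\phi=1$ together with a compact containment bound. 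That bound comes from $\mathbb E\langle |x|,\mu^K_t\rangle\le C$, again via the BBM domination. Jumps are of size $1/m_K$, so every limit point is continuous.

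\textbf{Identification and uniqueness.} The map $\nu\mapsto\int\phi(x)\nu((x,\infty))\nu(dx)$ is not weakly continuous at atoms. However, limit points have no atoms: since $\mu_0$ is atomless, the heat part regularizes for $t>0$. One can prove this by bounding $\mu_t\le e^{t}(\mu_0*p_t)$, a domination that passes to the limit from the BBM coupling. This suffices for continuity of the functional along the limit path. The martingale vanishes, so any limit point solves \eqref{eq: eqmu0}.

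For uniqueness, let $F_t(x)=\mu_t((x,\infty))$. Take two solutions and use the mild formulation with the heat semigroup ($\phi(s,x)=P_{t-s}f(x)$). Bound the difference in total variation: $|\mu_t-\nu_t|_{TV}\le C\int_0^t|\mu_s-\nu_s|_{TV}\,ds$, since the nonlinearity is Lipschitz in TV on bounded-mass sets. Gronwall then gives uniqueness, and uniqueness of the limit yields convergence in law to a deterministic $\mu$.

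\textbf{Density statement.} Existence of a density follows from the domination $\mu_t\le e^t P_t\mu_0$, which is absolutely continuous. Equation \eqref{eq: densitymu0} is then just \eqref{eq: eqmu0} rewritten.

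\textbf{Main obstacle.} I expect the main obstacle to be the nonlocal competition term: its discontinuity in the weak topology and controlling it uniformly. Using the BBM domination to rule out atoms is the step I would try first.
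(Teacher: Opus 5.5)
Your proposal is correct and shares the paper's architecture. Both use the semimartingale decomposition of Lemma~\ref{lem: martngl} (your drift agrees with it up to $c_Km_K=(1-e^{-K/c})^{-1}\to1$), Roelly's criterion with Aldous--Rebolledo in the vague topology, the M\'el\'eard--Roelly upgrade, and uniqueness via \eqref{eq: mild} with a total-variation Gronwall argument.

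The real difference is that you push the BBM coupling to the level of measures: $\mu^K_t\le\bar\mu^K_t$, where $\bar\mu^K_t$ is the $1/m_K$-normalized empirical measure of the dominating BBM and $\bar\mu^K_t\to e^t(\mu_0*p_t)$. You use this one bound for three things: moments and compact containment, ruling out atoms of $\mu_t$ for $t>0$, and obtaining the density (bounded by $e^t\langle1,\mu_0\rangle/\sqrt{2\pi t}$).

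The paper instead handles each of these separately:
\begin{itemize}
\item compact containment comes from a Gronwall estimate on $\langle\psi_n,\mu^K\rangle$ (Lemma~\ref{lem: vagueapprox});
\item the density comes from Fubini in \eqref{eq: mild}, which uses only the limit equation and not the particle coupling;
\item the limit is identified, while still in the vague topology, by asserting that $\Phi^{1,\phi}_t$ is continuous.
\end{itemize}
That continuity assertion is not true in general. The map $\nu\mapsto\langle\phi,\nu_t\rangle$ with $\phi\in C_b$ is not vaguely continuous. Also, the kernel $\phi(x)\mathbb{1}_{\{y>x\}}$ is discontinuous on the diagonal, so the competition term is not even weakly continuous at measures with atoms. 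Your ordering (upgrade to the weak topology first, then identify) together with the no-atom domination is exactly what makes that passage to the limit rigorous.

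Two things to tighten:
\begin{itemize}
\item Compact containment must hold with $\sup_{t\le T}$ inside the expectation. A fixed-time bound on $\mathbb{E}\langle|x|,\mu^K_t\rangle$ does not give this, but applying many-to-one to $\sup_{s\le T}|X_u(s)|$ along ancestral paths of the dominating BBM does.
\item You need not assume $\mu_0$ is atomless. The domination covers every $t>0$, and $s=0$ is a null set in the time integral.
\end{itemize}
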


\begin{remark}
	Note that equation \eqref{eq: densitymu0} can be seen as a non local FKPP. There has been a growing interest in the study of this type of equations, particularly in understanding the similarities and differences between the behavior of its solutions and those of the local version. Particulary, \cite{Berestycki2009,Hamel2014,Penington2018} consider the equation
	\begin{equation} \label{eq: nonlocal}
		u_t-\Delta u =\mu u(1-\phi*u), 
	\end{equation}
	where $\phi\in L^1(\mathbb{R})$ is a given convolution kernel satisfying  $\int_{\mathbb{R}}\phi(x)dx=1$.
	These authors studied the existence and stability of steady states and traveling wave solutions for certain choices of $\mu$ and $\phi$, however, these aspects are not the focus of the present work. From the particle‑system literature, it has been shown that equation~\eqref{eq: nonlocal} arises as the hydrodynamic limit of a branching Brownian motion with decay of mass \cite{AddarioBerry2017,AddarioBerry2019}, that is, a standard branching Brownian motion with added local competition. This connection was used to derive results on the position of the front of the particle system.
\end{remark}

\begin{remark}
	The proof of Theorem \ref{thm: convemp} is based on techniques and ideas adapted from the works of  \cite{Fournier2004},  \cite{Champagnat2007},  \cite{Meleard2015}, and \cite{Beckman2019}, among others. 
	The dynamics of the Log-BBM can be viewed as a particular case of the nonlocal Lotka–Volterra cross-diffusion systems introduced by Mel\'eard and Fontbona in \cite{Fontbona2013}. However, while Mel\'eard and Fontbona  consider general Lipschitz continuous interaction kernels, our interaction kernel is given by $K(x) = c\mathbb{1}_{\{x\leq 0\}}$, which is not Lipschitz. Fontbona and Muñoz-Hernández \cite{Fontbona2022} studied a closely related model in which particles move according to a more general diffusion process, reproduction is homogeneous in space, and logistic competition is uniform throughout space. In this process, the population size evolves as in the logistic branching Brownian motion, but there is no selective advantage for particles with higher positions. They established a convergence‑rate result for the particle system toward the large‑population limit proved in \cite{Fontbona2013}, which in this particular case, the reaction term is also non-local.
\end{remark}

Let  
\begin{equation}\label{eq: cdf0}
	F^K(t,x):=\mu^K_t((-\infty,x]), \qquad t\geq 0, x\in\mathbb{R}
\end{equation}
be the (random) cumulative distribution function of the random measure given by \eqref{eq: defemp0}. As a consequence of Theorem~\ref{thm: convemp}, we have that for any $t \geq 0$,  $F^K(t,\cdot)$ converges in distribution to $F(t,\cdot)$, the cumulative function of  the solution of \eqref{eq: eqmu0}. 
As $\partial_x F(t,x) = u(t,x)$, for every test function $\phi\in C^{1,3}_c(\mathbb{R}^+\times\mathbb{R})$, the function $F$ satisfies the weak form of the FKPP equation, namely
\begin{equation}\label{eq: weakFKPP}
	\begin{split}
		\int_{\mathbb{R}}F\partial_x \phi   dx = \int_{\mathbb{R}}F_0\partial_x \phi_0   dx+ \int_0^t\int_{\mathbb{R}}\Big(\partial_{t}\partial_{x} \phi + \partial_{xxx} \phi  - (1- F )\partial_x \phi \Big)Fdxds.
	\end{split}
\end{equation}
Note that, applying the same tools as in Lemma \ref{lem: uniqueness}, we can show that the equation above admits a unique solution on $[0,T]$. In \cite{Kolmogorov1937} it was shown that a classical solution exists for the FKPP equation \eqref{eq: fkpp} with Heaviside initial condition, that is a function $u\in C^{1,2}([0,T]\times \mathbb{R})$ that satisfies equation \eqref{eq: fkpp} with forcing term $H(u)= u(1-u)$. As a classical solution is a solution of the same equation in its weak form, we have that it must coincide with $F$ on the interval $[0,T]$.
Therefore, we can deduce the following result.   
\noindent
\begin{corollary}\label{Col: cdf}
	For any $T>0$, the cumulative distribution function associated with the empirical measures defined in \eqref{eq: defemp0} converges in probability to a deterministic limit $F(t,\cdot)$, which is a weak solution of the FKPP equation, i.e. for a  given initial CDF $F_0$,
	\begin{equation}
		\left\{
		\begin{aligned}
			\partial_t F (t,x)  &=  \frac{1}{2}\partial_{xx} F (t,x)  - F(t,x)  (1-  F(t,x)), \qquad t\leq T, x\in\mathbb{R}\\
			F(0,x) & =  F_0(x), \qquad x\in\mathbb{R}         
		\end{aligned}
		\right..
	\end{equation}
	Furthermore, if $F_0(x) = \mathbb{1}_{\{x>0\}}$ and $U$ is a classical solution to the FKPP equation in the interval $[0,T]$, then $F$ coincides with $U$ in that time interval, i.e. $U(t,\cdot) = F(t,\cdot)$ for $ t\in[0,T]$.
\end{corollary}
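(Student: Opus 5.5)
The plan is to derive the corollary from Theorem~\ref{thm: convemp} in three steps: pass from weak convergence of measures to convergence of distribution functions, read off the equation for $F$ by choosing a suitable test function, and conclude with uniqueness.

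\textbf{Step 1: convergence of $F^K$.} By Theorem~\ref{thm: convemp}, $\mu^K\to\mu$ in law in $D([0,T],M_F(\mathbb{R}))$, and the limit is deterministic and continuous in time. Convergence in law to a deterministic limit is the same as convergence in probability. Since the limit is continuous, Skorokhod convergence upgrades to uniform convergence on $[0,T]$ in a metric inducing the weak topology, so for each $t$ we get $\mu^K_t\to\mu_t$ weakly in probability. Fix $t$ and suppose $\mu_t$ has no atom at $x$. Then the portmanteau theorem gives $F^K(t,x)=\mu^K_t((-\infty,x])\to\mu_t((-\infty,x])=F(t,x)$ in probability.

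The absence of atoms follows from the second part of Theorem~\ref{thm: convemp} whenever $\mu_0$ has a density. In the Heaviside case one argues via the heat-kernel smoothing in the mild form of \eqref{eq: eqmu0}: $\mu_t$ is absolutely continuous for $t>0$. Continuity of $F(t,\cdot)$ then lets us bootstrap pointwise convergence to locally uniform convergence in $x$, using monotonicity in a P\'olya-type argument.

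\textbf{Step 2: the equation for $F$.} Given $\psi\in C^{1,3}_c$, take $\phi=\partial_x\psi$ as the test function in \eqref{eq: eqmu0}. Write $\langle\partial_x\psi,\mu_s\rangle=-\int\psi\,u\,dx$ up to sign, and integrate by parts to move derivatives onto $F$. The nonlocal term is handled by the identity
\[
2u(s,x)\int_x^\infty u(s,y)\,dy = 2u(s,x)\bigl(m_s-F(s,x)\bigr),\qquad m_s=\mu_s(\mathbb{R}),
\]
where $m_s$ is the total mass. Since $u\,F=\tfrac12\partial_x(F^2)$, integrating in $x$ turns the reaction term into $F(1-F)$ up to the total-mass normalization. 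The total mass $m_s$ itself satisfies $\dot m=m-m^2$ (take $\phi\equiv1$ in \eqref{eq: eqmu0}), so $m_s\equiv1$ when $m_0=1$. Since $m_K$ is the stationary mean, $m_0=1$ holds, and we recover \eqref{eq: weakFKPP}, the weak form of the stated FKPP equation.

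\textbf{Step 3: identification with the classical solution.} The weak FKPP equation \eqref{eq: weakFKPP} has a unique solution on $[0,T]$; this is proved by a Gronwall estimate in the mild formulation, using that $F\mapsto F(1-F)$ is Lipschitz on $[0,1]$, as in Lemma~\ref{lem: uniqueness}. A classical solution $U$ with Heaviside data is also a weak solution: multiply by the test function and integrate by parts, where the boundary terms vanish by compact support and the data are attained in $L^1_{loc}$ as $t\downarrow0$. Hence $U=F$ on $[0,T]$.

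The main obstacle is Step 1 at times $t$ and points $x$ where $\mu_t$ might have atoms, in particular $t=0$ in the Heaviside case, together with justifying uniqueness for a merely bounded measurable $F_0$. I would handle the first by restricting to $t>0$ and invoking the smoothing property, and the second by working with the mild (Duhamel) formulation against the heat kernel.
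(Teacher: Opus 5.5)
Your argument is the paper's: weak convergence of $\mu^K_t$ gives convergence of $F^K(t,\cdot)$, rewriting \eqref{eq: eqmu0} in terms of $F$ by integration by parts gives \eqref{eq: weakFKPP}, and uniqueness for \eqref{eq: weakFKPP} (asserted in both texts via the tools of Lemma~\ref{lem: uniqueness}) plus ``classical $\Rightarrow$ weak'' identifies $F$ with $U$; your handling of atoms, the P\'olya-type upgrade and the mass identity $m_s\equiv 1$ make explicit points the paper leaves implicit. One slip in Step~2: to get \eqref{eq: weakFKPP} you should test \eqref{eq: eqmu0} with $\psi$ itself and use $\langle\psi,\mu_s\rangle=\int\psi\,dF(s,\cdot)=-\int F(s,x)\,\partial_x\psi(x)\,dx$, since $\langle\partial_x\psi,\mu_s\rangle$ equals $-\int F\,\partial_{xx}\psi\,dx$ rather than $\mp\int\psi u\,dx$, so testing with $\partial_x\psi$ only yields the weak form against derivatives.
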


\subsection{Microscopic weak selection principle}\label{weakselection}
The study of the FKPP equation has focused on the fact that it is a partial differential equation that admits traveling wave solutions. It is known that it has an unstable constant solution ($u\equiv0$) and a stable constant solution ($u\equiv 1$) (see \cite{Aronson1975}). 
Also, for initial conditions such that $u(0,x)$ goes to one as $x$ goes to $-\infty$ and $u(0,x)$ goes to zero as $x$ goes to $\infty$, there exists a one parameter family $\{U_v\}_{v}$ of traveling wave solutions, of the form $u(t,x) = U_v(x-vt),$ such that if we set $z= x-vt$ then $U_v(z)\to1$ as $z\to-\infty$ and $U_v(z)\to 0$ as $z\to\infty$. Moreover, it is known that when the initial condition ranges between one and zero, there exists traveling wave solutions only for velocities $v \geq \sqrt{2}$. In particular, for initial conditions that decays sufficiently fast, as 
\begin{align*}
	\int u(0,x)xe^{\sqrt{2}x}dx< \infty,    
\end{align*}
the associated front travels at minimal speed $v_0= \sqrt{2}$ (see \cite{Bramson1983}). 

A classical special case exhibiting this behavior was already established in \cite{Kolmogorov1937}:
when the FKPP equation is started from a step (Heaviside-type) initial condition, the solution $u(t,x)$ develops into a traveling front. That is, there exists a non  trivial final shape $w_{\sqrt{2}}$ and a centering term $m(t)$ such that 
\begin{equation}\label{conv}
	u(t,m(t)+z) \underset{t\to\infty}{\longrightarrow} w_{\sqrt{2}}(z).
\end{equation}
In this case, the centering term is not unique, but it is usually taken such that $u(t,m(t))=1/2$. \cite{Bramson1978, Bramson1983} gave an entirely probabilistic proof that \begin{equation}\label{centering}
	m(t) = 	\sqrt{2}t - \frac{3}{2\sqrt{2}}\log t + O(1) \text{ when } t\to\infty, \text{ and }\quad w_{\sqrt{2}}(z) = Aze^{-\sqrt{2}z} \quad \text{for large} \ z.
\end{equation} 
On the other hand, if the initial condition behaves as $p(x)e^{-\sqrt{2}x}$ for some polynomial $p(x)$, the front still propagates at the minimal speed $v_0 = \sqrt{2}$, although the logarithmic correction in \eqref{centering} differs from the fast-decaying case. In turn, if the initial condition decays as $e^{-\gamma}$ for $\gamma< \sqrt{2}$ the front propagates with a speed larger than $\sqrt{2}$. 
Therefore, we can conclude that in all these cases there exists a limiting front profile $w_v(x)$ and a centering function $m(t)$ such that 
\[u(t,m(t)+z)\to w_v(z) \quad \text{and}\quad  \frac{m(t)}{t}\to v,\]
where, for large $z$  
\begin{equation}
	w_v(z)=\left\{\begin{array}{@{}l@{}}
		A_ve^{-\gamma(v)z} \ \text{if}\ v> \sqrt{2}  \\
		A_v z e^{-\sqrt{2}z}  \ \text{if}\ v= \sqrt{2}
	\end{array}\right.\,.
\end{equation}
Here,  $\gamma<\sqrt{2}$ is the smallest solution to the equation $\frac{\gamma}{2}+\frac{1}{\gamma}=v$ (see \cite{Brunet2016} for more on this subject).


Thus, we have seen that, when the initial condition decays sufficiently fast, deterministic PDEs select the minimal traveling wave speed and profile. 
As mentioned earlier, these noiseless equations typically represent a mean-field description of an underlying microscopic reaction–diffusion system (as seen in subsection \ref{nexus}). The study of how the introduction of a microscopic amount of random noise affects this mean-field picture was pioneered by  Brunet and Derrida. 
They considered two main approaches: adding a multiplicative noise term to the equation to represent external noise, rather than fluctuations inherent to the microscopic model, and working directly with the microscopic model itself. 
Independent of the approach,  Brunet and Derrida \cite{Brunet1997,Brunet1999,Brunet2001} found that, in contrast to the macroscopic equations, which admit a continuous family of traveling wave solutions, the microscopic models select a single front velocity (the minimal) for arbitrary initial conditions as one approaches the mean field limit. In the literature, this phenomenon is known as the \emph{microscopic weak selection principle}. 

The first weak microscopic selection principle was proven by Bramson et. al in \cite{Bramson1986}. 
They studied a model on the lattice with state space $\Omega=\{0,1\}^{\mathbb{Z}}$, where at any time a point can be empty or occupied by at most one particle. 
Particles can jump or create new ones only at the nearest neighbor (if it is empty), at a rate parameterized by some parameter $\theta$ . 
The authors show that this model, as $\theta$ goes to infinity, has a hydrodynamic limit given by a reaction-diffusion equation. 
They also showed that, for all $\theta< \infty$, the system has a unique invariant distribution when seen from its rightmost particle. Furthermore, the average velocity of this particle  converges to the minimal wave speed of the corresponding reaction-diffusion equation. 
Following ideas from \cite{Bramson1986}, Groisman and Soprano- Loto \cite{Groisman2021} proved a microscopic weak selection principle for their generalization of the $N$-BBM.

Along the same lines, we were able to establish the weak selection principle for our model and to show that the asymptotic velocity of the system coincides with the speed of the rightmost particle in the BBM. Our approach builds on the ideas in  \cite{Groisman2020} and \cite{Groisman2021}, making use of the monotonicity of the spacings between the particles ordered by fitness  within the system. 

Let $\xi_{N_0^c}$ denote some initial configuration of particles, where $N_0^c \sim \pi$. The following result then follows, the complete proof of which can be found in Section~\ref{microweak}.
\begin{theorem} \label{thm: speed}
	Given the Log-BBM($1,c$) $(\boldsymbol{X^c}(t))_{t\geq 0}$,
	\begin{itemize}
		\item[i)] \label{it: vmaxequalvmin} there exists $v_{c} \in \mathbb{R}$ such that for every $\xi_{N_0^c}$,   
		\begin{equation*}
			\lim_{t\rightarrow\infty} t^{-1}\max_{u\in\mathcal{N}^c_t} X^c_u(t) = \lim_{t\rightarrow\infty} t^{-1}\min_{u\in\mathcal{N}^c_t} X^c_u(t) =v_{c} \quad \mathbb{P}_{\xi_{N_0^c}}\text{-a.s and in } \ L^1.
		\end{equation*}
		\item[ii)]  \label{it: speed0}       Furthermore, we have that 
		\begin{equation*}
			v_0:=\lim_{c\rightarrow0} v_{c} = \sqrt{2}.
		\end{equation*}   
	\end{itemize}    
\end{theorem}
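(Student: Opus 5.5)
Part i) is a law-of-large-numbers statement for the front, and I will prove it with a subadditivity argument in the spirit of \cite{Groisman2020,Groisman2021}, built on a monotone coupling. Part ii) then follows by sandwiching $v_c$ between the BBM speed from above and a lower bound that tends to $\sqrt{2}$ as $c\to0$.

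\textbf{Monotone coupling.} I will compare two Log-BBM($1,c$) configurations $\xi\preceq\xi'$, where $\preceq$ means that the $k$-th rightmost particle of $\xi$ lies to the left of the $k$-th rightmost particle of $\xi'$, with the convention that missing particles sit at $-\infty$. To couple them, I order the particles of each system by rank, drive particles of equal rank by the same Brownian motions, and use common exponential clocks for branching and for the competition pairs. As in the spacing argument of \cite{Groisman2021}, I will then check that $\preceq$ is preserved by every event. Since the competition always kills the leftmost particle of the chosen pair, it acts on ranks in the same way for both systems. Because the population sizes differ, I will couple the two population processes so that $N_t\le N'_t$ whenever $N_0\le N'_0$; this is the monotone coupling of Lambert's LB-process, in which extra particles only add births. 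The main delicacy is the competition rate: it is proportional to $N$, so a larger population also suffers more deaths. I expect to handle this by letting the extra particles of $\xi'$ compete only among themselves and with particles of rank beyond $N_t$. Since $\pi$ is stationary and $\mathcal{N}^c$ is positive recurrent, I also get a regeneration structure.

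\textbf{Subadditivity and a single speed.} Let $M_t=\max_u X^c_u(t)$, started from a single particle at $0$. Using the monotone coupling together with translation invariance, I aim for the bound $M_{t+s}\le M_t+\tilde M_s$, where $\tilde M_s$ is an independent copy. To obtain it, I dominate the configuration at time $t$ by $N_t$ particles stacked at $M_t$ and then use the fact that the population process returns to size $1$ in finite expected time. This is the step I expect to be the main obstacle: with a random population one can only compare with a configuration that has all particles at the maximum, so the copy $\tilde M_s$ has to be started from $N_t$ particles. I will therefore apply Kingman's subadditive ergodic theorem along the renewal times at which $N=1$. Integrability comes from domination by a BBM. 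This gives $M_t/t\to v_c$ almost surely and in $L^1$.

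For the minimum, I will show that the diameter $M_t-\min_u X^c_u(t)$ is tight, i.e. $O(\log t)$ almost surely. The argument is that every particle is killed at rate at least $c$ times the number of particles to its right. Any particle lagging a distance $L$ behind has ancestors that were outcompeted only with exponentially small probability in $L$, because BBM grows exponentially while the front population stays bounded in law under $\pi$. Independence of the limit from $\xi_{N_0^c}$ follows because any two initial configurations are sandwiched between shifted single-particle configurations up to a finite random shift.

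\textbf{Part ii).} The upper bound $v_c\le\sqrt{2}$ comes from coupling with the BBM, since deletions only remove particles, together with the classical fact $\max/t\to\sqrt{2}$. For the lower bound, I fix $v<\sqrt{2}$ and use the standard fact that BBM has, with probability bounded away from $0$, at least $e^{\delta T}$ descendants above $vT$ at time $T$ while staying within a growth budget. During $[0,T]$, the number of competition events affecting such descendants is of order $c\,e^{2T}T$. Choosing $T$ so that $c\,e^{2T}\ll1$, i.e. $c\to0$ with $T$ large, none of these descendants is killed with high probability. Iterating over blocks of length $T$ via the monotone coupling, restarting from one particle above $vT$, gives a supermartingale/renewal lower bound $v_c\ge v-o(1)$. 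Letting $c\to0$ and then $v\uparrow\sqrt{2}$ yields $v_0=\sqrt{2}$.
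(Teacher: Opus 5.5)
Part i) of your proposal is in substance the paper's argument. Between successive returns of $N^c$ to $1$, the increments of the maximum are exactly i.i.d. So what you really use is the renewal law of large numbers, with integrability from the BBM coupling as in Lemma~\ref{lem3}; Kingman adds nothing. The minimum then follows at once, because $\max=\min$ at each renewal time. For the same reason you need neither the diameter heuristic nor the sandwich argument for independence of $\xi_{N_0^c}$. The upper half of that sandwich is false anyway: a configuration with two or more particles is never $\preceq$ a single particle. The upper bound in ii) is also the paper's.

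Your lower bound in ii), however, takes a genuinely different route. The paper argues macroscopically:
\begin{itemize}
\item it starts the process seen from the minimum in its stationary law (Proposition~\ref{prop: harris}), so that $\partial_t\mathbb{E}(\overline{\mathbf{X}}^K(t))=v_K$;
\item it writes this derivative via Lemma~\ref{lem: empmsreq} as a weighted sum of spacings;
\item it compares that sum with the start from all particles at the origin, using the gap monotonicity of Lemma~\ref{gaps};
\item it passes to the FKPP limit with Heaviside data (Corollary~\ref{Col: cdf}) and concludes with $\int w_{\sqrt2}(1-w_{\sqrt2})=\sqrt2$.
\end{itemize}
You argue microscopically. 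The monotone coupling lets you replace the configuration at any time by a single particle at its maximum. This gives $\mathbb{E}M_{t+s}\ge\mathbb{E}M_t+\mathbb{E}M_s$ for the one-particle start, hence $v_c\ge\mathbb{E}M_T/T$ for every $T$. Take $T=\frac14\log(1/c)$ and let $N_s$ be the Yule population. The probability of any competition on $[0,T]$ is at most $c\int_0^T\mathbb{E}[N_s(N_s-1)]\,ds\le ce^{2T}=\sqrt c$. So the Log-BBM equals the BBM on that window, and $\mathbb{E}M_T/T\ge\sqrt2-O(\log T/T)-O(c^{1/4})$.

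What each route buys:
\begin{itemize}
\item Yours avoids the hydrodynamic limit, the Harris-recurrence analysis, and the interchange of $K\to\infty$ with $t\to\infty$. It also yields an explicit rate, $v_c\ge\sqrt2-O(\log\log(1/c)/\log(1/c))$.
\item The paper's identifies $v_0$ with the minimal FKPP front speed through the travelling wave. Yours gives this up, since $\sqrt2$ enters only through the BBM.
\end{itemize}

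Two points need tightening.

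\begin{itemize}
\item \textbf{The coupling with unequal population sizes.} It works for a cleaner reason than the one you give. Rank particles from the top. The $k$-th one dies at rate $2c(k-1)$, regardless of how many particles lie below it. A competition between a top-ranked particle and an extra particle always kills the extra one. So you should use common rank-indexed branching and death clocks. Between events, add the comparison for ranked Brownian systems with extra particles at the bottom (a Sarantsev-type comparison, in the spirit of Lemma~\ref{gaps}). Do not restrict whom the extra particles compete with: that would change the law of $\xi'$.
\item \textbf{The block argument.} A success probability merely bounded away from $0$ does not give $v_c\ge v-o(1)$. You need it to tend to $1$, which is true because $M^{\mathrm{BBM}}_T/T\to\sqrt2$. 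You also need control of the loss in failed blocks. The expectation version above handles both automatically.
\end{itemize}
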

\noindent 
The paper is organized as follows. In Section~\ref{Log}, we introduce the model and provide an alternative construction via a coupling with branching Brownian motion. In Section~\ref{hydrodynamic}, we prove Theorem~\ref{thm: convemp}, namely the convergence of the empirical measure of the log-BBM to a probability measure whose density solves the nonlocal PDE \eqref{eq: densitymu0}. Finally, in Section~\ref{microweak}, we prove Theorem~\ref{thm: speed}. In particular, Subsection~\ref{subsec: existence} establishes the existence of the limiting velocity, while Subsection~\ref{subsec: speedselection} shows that, as $c \to 0$, this limiting speed converges to the minimal velocity of the FKPP travelling wave solution.

\section{Logistic Branching Brownian Motion}\label{Log}

\subsection{Branching process with logistic growth}\label{LB}
Let $(N_t)_{t\geq 0}$ be a \emph{branching process with logistic growth (LB-process)}, that is a continuous time Markov chain taking values in $\mathbb{N}_0$, with the following dynamics.
\begin{itemize}
	\item At rate one, each particle produces $k$ children with probability $\rho_{k}$.
	\item At rate $d$, each particle dies naturally.
	\item At rate $c$, each particle selects another fixed particle  and kills it.	 
\end{itemize}
Therefore, when the total population has size $n$, the transitions of the process are given by
\begin{equation*}
	n \mapsto \left\{
	\begin{array}{cc}
		n+k     &\text{at rate } n\rho_k,  \qquad  \text{if } k \in \mathbb{N}\\
		n-1     &\text{at rate } n(d + c(n-1))  
	\end{array}	
	\right. . 
\end{equation*}
We will assume that the logistic branching process does not explode in finite time a.s., i.e.
\begin{equation} \label{cond L} \tag{Condition L}
	\sum\limits_{k\in \mathbb{N}} \rho_k \log (k) < \infty. 
\end{equation}
Under this condition, Lambert \cite{Lambert2005} showed that when $d>0$ the process goes extinct almost surely. On the other hand, when $d = 0$ (i.e., in the absence of natural deaths), the LB-process is positive recurrent in $\mathbb{N}$ and converges in distribution to a stationary measure $\pi$. From now on we will work with the dyadic case, that is $\rho_1 = 1$ and $d=0$. In this case, the stationary distribution $\pi$ takes the form 
\begin{equation}\label{eq: mu}
	\pi_k=\dfrac{c^{-k}}{ k!}\dfrac{e^{-1/c}}{1 - e^{-1/c}} \qquad k\in\mathbb{N},
\end{equation} 
which correspond to a Poisson random variable with mean $c$ conditioned to be positive, that we denote by $N^c_0$. Note that 
\begin{equation}\label{eq: exp}
	\mathbb{E}[N^c_0]= \dfrac{1}{c(1-e^{-1/c})}.
\end{equation}
\begin{remark}\label{re: nacpuro}
	Note that in the absence of competition, that is $c=0$, the LB-process defined above is a continuous-time Bienayme-Galton-Watson (BGW) process. If in addition $d=0$ and $\rho_1=1$, it becomes a Yule process.    
\end{remark}

\subsection{Definition of the Logistic Branching Brownian Motion with selection}
Our object of study will be a modification of the $N$-BBM model with the following dynamics:
\begin{itemize}
	\item At time $t=0$ we have an amount of particles given by the random variable $N^c_0$ with distribution $\pi$, given by \eqref{eq: mu}. The particles are positioned on the real line according to a (spatial) density $\boldsymbol{\psi}_{\pi}$.
	\item Each particle moves independently as a standard Brownian motion from this initial configuration.
	\item (\emph{Reproduction event.}) At rate one, each particle has a new offspring, that moves independently as standard Brownian motion from its birthplace.
	\item (\emph{Competition event.}) At rate $c$, each particle selects another particle, and the one with the \emph{lower fitness}, determined by its position, is removed from the system. Formally, let $\mathcal{N}^c(t)$ denote the set of indexes of alive particles at time $t$ and let $X^c_u(t)$ be the position of the particle indexed by $u$. Therefore, if the system selects particles  $u, v \in \mathcal{N}^c(t)$ to compete, the one that gets removed is the particle $k_{u,v}(t) := \argmin_{u,v}\{X^c_u(t),X^c_v(t)\}.$ 
\end{itemize}
Clearly, the process $(|\mathcal{N}^c(t)|)_{t\geq 0}$ correspond to a LB-process, and we refer to the process $(\mathbf{X}^c(t))_{t\geq 0}$, given by $\mathbf{X}^c(t) = \Big(X^c_u(t), u\in\mathcal{N}^c_t\Big)$, described above as a (dyadic) \textbf{logistic branching Brownian motion with selection} with branching rate one and competition rate $c$. In the remainder of the paper, we will refer to it as \textbf{Log-BBM$(1,c)$}.  
\noindent
Indeed, the Log-BBM$(1,c)$ is a Markov process in $\mathbb{S}=\bigcup\limits_{n\in \mathbb{N}} \mathbb{R}^n $, with infinitesimal generator given by   
\begin{equation}\label{InfGenLogBBM}
	\mathcal{G} F(\boldsymbol{x}) = \frac{1}{2} \sum_{i=1}^{n} \frac{\partial^2}{\partial x_i^2} F(\boldsymbol{x}) +  \sum_{i=1}^{n}\big[ F(\Pi^{+}_{i}(\boldsymbol{x})) - F(\boldsymbol{x})\big] +c \sum_{i=1}^{n}\sum_{i\neq j} \big[F(\Pi^{-}_{ij}(\boldsymbol{x})) - F(\boldsymbol{x})\big]
\end{equation}
for every $F:  \mathbb{S} \rightarrow \mathbb{R}$ with $F \in C^2(\mathbb{S})$. Here, for each $\boldsymbol{x} \in \mathbb{R}^n \subset \mathbb{S}$ and $i \in \{1,\dots,n\}$, $\Pi^{+}_i(\boldsymbol{x})$ is the configuration obtained from $\boldsymbol{x}$ by replacing $x_i$ with two particles at the same location, that is
\begin{equation}
	\Pi^{+}_i ((x_1,\cdots,x_n)) := (x_1,\cdots,x_i,x_i,\cdots,x_n) \in \mathbb{R}^{n+1};
\end{equation}
and for $i,j \in \{1,\dots,n\}$, $\Pi^{-}_{ij} (\boldsymbol{x})$ correspond to the configuration obtained from $\boldsymbol{x}$ by removing $k_{i,j}(t)$, that is
\begin{equation}
	\Pi^{-}_{ij} ((x_1,\cdots,x_n)) := (x_1,\cdots,x_{k_{i,j}-1},x_{k_{i,j}+1},\cdots,X_n) \in \mathbb{R}^{n-1}.
\end{equation}
\subsubsection{Construction of the Log-BBM using the BBM}\label{subsec: coupling}

Let $\mathcal{T}$ be a continuous-time dyadic tree, associated with a Yule process. We label the nodes of the tree using the Ulam-Harris notation, in the space   
\begin{align*}
	\mathcal{U}:= \bigcup_{n\geq0}\mathbb{N}^n ,
\end{align*}
where $\mathbb{N}^0= \{\emptyset\} $. For any element $u  = (u_1, \cdots, u_n) \in \mathcal{U}$ we write $|u|= n$ for the generation of $u$, and for $0\leq k< n$, $u|k = (u_1,\cdots, u_k)$ is the $k$-th ancestor of $u$. Particularly, $u|0=u_0$ is the root.  For any $u\in \mathcal{T}$, we denote by $\tau^{b}_u$ the birth time of particle $u$ ($\tau_{u_0}^b = 0$) and we can assign an independent standard Brownian motion $B_u = \{B_u(s): s\geq 0\}$, which describes the motion of $u$ relative to its birth-position. With this notation we can write the position of $u$ at time $t\geq \tau_u^b$ by 
\begin{equation} \label{eq: position}
	X_u(t) = X_{u||u|-1}(\tau^b_{u}-) + B_u(t-\tau^b_{u})= X_{u_0}(0) + \sum_{k=0}^{|u|-1} B_{u|k}(\tau^b_{u|k+1}-\tau^b_{u|k}) + B_u(t-\tau^b_{u}).     
\end{equation}
On the other hand, we will abuse the notation used to denote the position of $u$ at time $t< \tau_u^b$ by $X_u(s) :=X_v(s)$, where $v$ is the (only) ancestor of $u$ at time $s$. 
If for each $u$, $\tau^b_{u|k+1}-\tau^b_{u|k}\sim\text{Exp}(1)$ for all $k< |u|$, the Markov process $\big(\mathbf{X}(t)\big)_{t\geq 0}: = \big(X_u(t), u\in\mathcal{N}_t\big)_{t\geq 0}$ is the dyadic branching Brownian motion, where $\mathcal{N}_t:= \{u\in\mathcal{T}: \tau^b_u\leq t\}\subset \mathcal{U}$ is the set of particles alive at time $t$. In particular, $N_t :=|\mathcal{N}_t|$ follows a geometric distribution with parameter $e^{-t}$, so $\mathbb{E}(N_t) = e^t$. 

Consider now, an initial configuration $\mathbf{\xi}_{N_0^c}\in \mathbb{S}$ with $N_0^c\sim \pi$ and a system of $N_0^c$ independent dyadic branching Brownian motions $\big(\mathbf{X}^1(t)\big)_{t\geq 0}, \cdots,$ $\big(\mathbf{X}^{N_0^c}(t)\big)_{t\geq 0}$, starting at $\xi_{N_0^c}^i$ for $i\in[1,N_0^c]$ respectively. Then, we can define the set of particles alive at time $t$ on the system by 
\[\mathcal{M}_t := \bigcup_{n=1}^{N_0^c} \mathcal{N}^n_t.\]
Note that the process $(|\mathcal{M}_t|)_{ t\geq 0}$ is the  sum of $N_0^c$ i.i.d. geometric random variables with success parameter $e^{- t}$, that is, $|\mathcal{M}_t|\sim NB(N_0^c,e^{-t})$ conditional on $N_0^c$.

We can construct a coupling between the  Log-BBM($1,c$)  a system of $N^c_0$ dyadic BBMs. 
Indeed, consider the following dynamics: at time $t=0$ there are $N^c_0$ blue particles which start moving as Brownian motions. Each  particle give birth at rate one to a particle of the same color, that move from its birthplace as its parent. Additionally, at rate $c$ each blue particle selects another blue particle, and the leftmost particle changes its color to red.  Both particles evolve independently as BBMs with a branching rate one. This coupled construction gave us a Log-BBM($1,c$) (formed by the blue particles) and a BBM (formed by the red and blue particles), both starting with $N^c_0$ individuals (see Figure \ref{fi:fig2} below).Under this coupling, we have that
\begin{equation}
	|\mathcal{N}^c_t|\leq |\mathcal{M}_t|,\quad\text{ and } \quad\max_{u\in \mathcal{N}^c_t}X^c_u(t)\leq \max_{u\in \mathcal{M}_t}X_u(t)\quad \text{a.s. for}\quad t\geq 0. 
\end{equation}
Particularly, if we consider the logistic forest $\mathbb{T}^{c}:=\bigcup_{n=1}^{N_0^c}\mathcal{T}^{c,n}$, associated with the Log-BBM($1, c$), is easy to see that it is a sub-forest of $\bigcup_{n=1}^{N_0^c}\mathcal{T}^n$,  the forest associated with the $N_0^c$ dyadic BBMs. In addition to the birth time $\tau^{b}_u$, each particle $ u \in \mathbb{T}^{c}$  is assigned a death time  $\tau^{d}_u$. With this notation, 
\[
\mathcal{N}^c_t := \big\{u\in\mathbb{T}^{c}: \tau^b_u\leq t<\tau^d_u\big\} \subset \mathcal{N}_t,
\]
is the set of particles alive at time $t$, hence for  each $u\in\mathcal{N}^c_t $, their position at time $t$ is given by $X^c_u(t):=X_u(t)$, as defined in \eqref{eq: position}.

\begin{figure}[htbp]
	\centering 
	\includegraphics[scale=0.24]{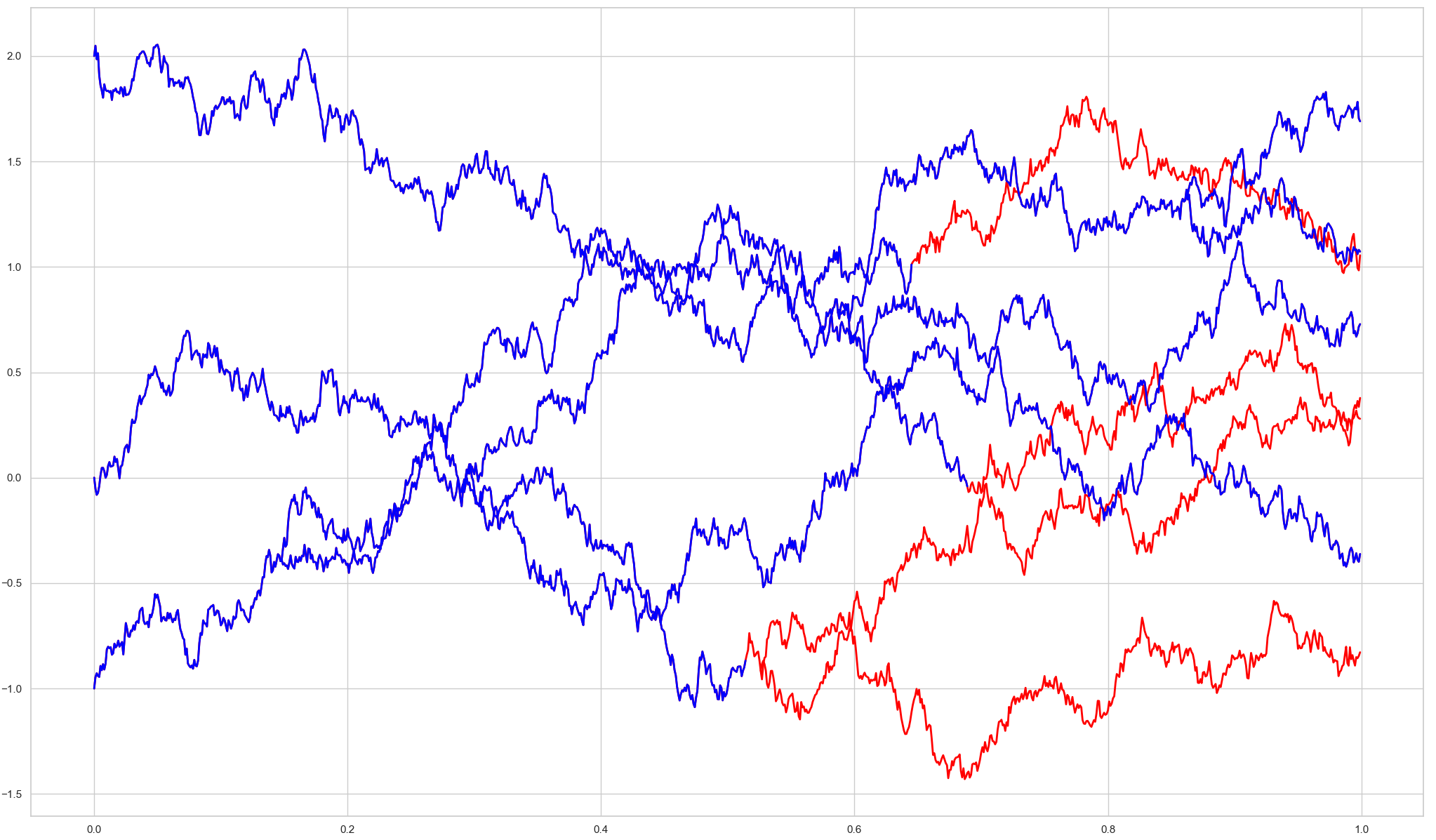}
	\caption{Simulation of three simultaneous BBMs with rate one and of the coupled Log-BBM$(1,0.25)$ on the interval $[0,1]$. Particles in blue belong to the Log-BBM (with selection) and all particles red and blue belong only to the original BBMs.}
	\label{fi:fig2}
\end{figure}

\section{Proof of Theorem \ref{thm: convemp}: Log-BBM as microscopic model for a Fisher-KPP type equation}\label{hydrodynamic}

We will start this section by describing the key elements of the proof of Theorem \ref{thm: convemp}.  First, in Lemma \ref{lem: martngl},  we show that  any measure $(\mu^{K}_t)_{t\in [0,T]}$ satisfies the SDE \eqref{eq: stochdiff}. Next, in Lemma \ref{lem: tighness}, we prove the tightness of the sequence of empirical measures $\big\{\left(\mu^{K}_t\right)_{t \in [0,T]}\big\}_{K\in\mathbb{N}} $, on the space $D\big([0,T], M^v_F(\mathbb{R})\big)$ defined in Notation \ref{not}, for any $T>0$. This ensures that any subsequence converges in law to some limit $(\mu_t)_{t\in[0,T]}$ in the same space. 
We then show that this limit measure, $(\mu_t)_{t\in[0,T]}$, solves equation \eqref{eq: eqmu0}, and is indeed its unique solution. Finally, we extend this result to the space $D\big([0,T], M_F(\mathbb{R})\big)$ and establish that  $(\mu_t)_{t\in [0,T]}$ is absolutely continuous and that its density satisfies \eqref{eq: densitymu0}. In what follows, the symbol $K$ is used as a superscript to represent a competition rate $c_K$.

Before stating the next lemma, let us first introduce some notation, using the construction from Subsection \ref{subsec: coupling}. We define the spatio‑temporal birth and death point measure of the process defined in $\mathbb{R}^+\times \mathbb{R}$ by
\begin{equation}\label{hl.12.1}
	\nu^b := \sum_{ u\in\mathbb{T}^K} \delta_{(\tau_u^b, X^K_u(\tau_u^b))},\qquad
	\nu^d := \sum_{ u\in\mathbb{T}^K }\delta_{(\tau_u^d, X^K_u(\tau_u^d))},
\end{equation}
respectively. For any $A\in\mathcal{B}(\mathbb{R}^+\times \mathbb{R})$, $\nu^b(A)(\text{resp.} \ \nu^d(A))$ counts the number of birth (resp. death) events occurring in $A$, i.e. for $A = [0,t]\times B$, $\nu^b(A) =\# \big\{u\in\mathbb{T}^K: \tau_u^b\leq t, X^K_u(\tau_u^b)\in B\big\}$ (resp. $\nu^d(A) =\# \big\{u\in\mathbb{T}^K: \tau_u^d\leq t, X^K_u(\tau_u^d)\in B\big\}$). With the notation introduced above, we can now state our first lemma, which provides a semimartingale representation of 
$\big<\phi,\mu^K\big>$ for every test function $\phi \in C_b^{1,2}(\mathbb{R}^+\times \mathbb{R})$ 
\begin{lemma}\label{lem: martngl}
	For any $K \in \mathbb{N}$, the measure $(\mu^{K}_t)_{t\geq 0}$ defined in \eqref{eq: defemp0} satisfies the following stochastic differential equation
	\begin{equation}\label{eq: stochdiff}
		\begin{split}
			\big< \phi, \mu^{K}_t\big>  =&  \big< \phi, \mu^{K}_0\big>
			+\int_{0}^{t}\int_{\mathbb{R}}  \Big(\partial_t\phi(s, x) +\frac{1}{2} \partial_{xx}\phi(s, x ) \Big) \mu^{K}_s(dx)ds\\
			&+\int_0^t \int_{\mathbb{R}}\phi(s,x)\Big(1-2\int_x^\infty\mu^{K}_s(dy)+2c_K\Big)\mu^{K}_s(dx)ds +L^{K,\phi}_t 
		\end{split},
	\end{equation}
	for all bounded functions $\phi(t,x) \in C^{1,2}_b(\mathbb{R}_+\times\mathbb{R})$. Here, $(L^{K,\phi}_t )_{t\geq 0}$ is a c\`adl\`ag martingale defined by 
	\begin{align*}
		L^{K,\phi}_t : = B^{K,\phi}_t + M^{K,\phi}_t  - D^{K,\phi}_t,
	\end{align*} 
	where $(B^{K,\phi}_t )_{t\geq 0}$, $(D^{K,\phi}_t )_{t\geq 0}$ and $(M^{K,\phi}_t)_{t\geq 0}$ are square-integrable martingales given by 
	\begin{equation}\label{eq: martings}
		\begin{split}
			&B^{K,\phi}_t  = \int_0^t\int_{\mathbb{R}} \phi(s,x)\nu^b(ds,dx) - \int_0^t \frac{1}{m_K}\sum_{u \in \mathcal{N}^K_s}\phi(s,X^K_u(s))ds \, ;\\
			& D^{K,\phi}_t  = \int_0^t\int_{\mathbb{R}} \phi(s,x)\nu^d(ds,dx) - \int_0^t  \frac{2c_K}{m_K}  \sum_{u \in \mathcal{N}^K_w} \phi(w,X^K_u(w))\Bigg(\sum_{v \in \mathcal{N}^K_w}\mathbb{1}_{\{X^K_u(w)\leq X^K_v(w)\}}-1\Bigg)dw \, \text{; and}\\
			& M^{K,\phi}_t  = \frac{1}{m_K}\sum_{u\in\mathbb{T}^K: \tau^b_u \leq t }\int_{0}^{t} \mathbb{1}_{\{\tau^{b}_u\leq s< \tau^{d}_u\}}\partial_x\phi(s, X^K_u(s)) dX^K_u(s), 
		\end{split}
	\end{equation}
	respectively. Furthermore, $L^{K,\phi} $ has  predictable quadratic variation 
	\[
	\begin{split}
		\big<L^{K,\phi} \big>_t & = \int_0^t  \frac{2c_K}{m^2_K}  \sum_{u \in \mathcal{N}^K_w} \phi^2(w,X^K_u(w))\Bigg(\sum_{v \in \mathcal{N}^K_w}\mathbb{1}_{\{X^K_u(w)\leq X^K_v(w)\}}-1\Bigg)dw \\&- \int_0^t \frac{1}{m_K^2}\sum_{u \in \mathcal{N}^K_s}\phi^2(s,X^K_u(s))ds - \frac{1}{m_K^2}\sum_{u\in\mathbb{T}^K: \tau^b_u \leq t}\int_{0}^{t} \mathbb{1}_{\{\tau^{b}_u\leq s <\tau^{d}_u\}}(\partial_x\phi(s, X^K_u(s)))^2 ds
	\end{split},\]
	and converges to $0$ when $K$ goes to infinity in $L^2$.
\end{lemma}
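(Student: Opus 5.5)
The plan is to obtain \eqref{eq: stochdiff} by a pathwise decomposition of $\langle\phi,\mu^K_t\rangle$ into its continuous motion part and its jump parts. The jump parts are then compensated using the known intensities of births and deaths. Between consecutive jump times the configuration is a finite collection of independent Brownian motions. For each $u\in\mathbb{T}^K$ and $s\in[\tau^b_u,\tau^d_u)$ we apply It\^o's formula to $\phi(s,X^K_u(s))$. Summing over particles and dividing by $m_K$, the contribution of the motion on $[0,t]$ equals
\begin{equation*}
\int_0^t\int_{\mathbb{R}}\Big(\partial_t\phi+\tfrac12\partial_{xx}\phi\Big)(s,x)\,\mu^K_s(dx)ds+M^{K,\phi}_t .
\end{equation*}
Each birth at $(s,x)$ adds $\phi(s,x)/m_K$, and each death at $(s,x)$ subtracts $\phi(s,x)/m_K$. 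Hence
\begin{equation*}
\langle\phi,\mu^K_t\rangle=\langle\phi,\mu^K_0\rangle+\text{(motion part)}+\frac1{m_K}\int_0^t\!\!\int\phi\,d\nu^b-\frac1{m_K}\int_0^t\!\!\int\phi\,d\nu^d .
\end{equation*}
The normalisation $1/m_K$ is understood to be included in $B^{K,\phi}$ and $D^{K,\phi}$. To justify this identity rigorously we localise with the stopping times $T_n=\inf\{t:|\mathcal{N}^K_t|\ge n\}$. By the coupling of Subsection \ref{subsec: coupling}, $|\mathcal{N}^K_t|\le|\mathcal{M}_t|$, and $\mathcal{M}_t$ has all moments finite. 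Therefore $T_n\to\infty$ and the localisation can be removed.

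Next we compensate the jumps. Births occur at rate one per particle at the particle's location, so the compensator of the birth term is $\frac1{m_K}\int_0^t\sum_u\phi(s,X^K_u(s))ds$, and $B^{K,\phi}$ is a martingale. For deaths, particle $u$ is removed whenever some pair $(u,v)$ or $(v,u)$ with $v\ne u$ is selected and $X_u\le X_v$. Each ordered pair is selected at rate $c_K$, so the total rate at which $u$ dies is $2c_K\big(\sum_v\mathbb{1}_{\{X_u\le X_v\}}-1\big)$, which gives the compensator in $D^{K,\phi}$. Ties have probability zero since the motions are Brownian. Rewriting in terms of $\mu^K$,
\begin{equation*}
\frac{2c_K}{m_K}\sum_u\phi(X_u)\Big(\sum_v\mathbb{1}_{\{X_u\le X_v\}}-1\Big)=2c_Km_K\int\phi(x)\int_x^\infty\mu^K(dy)\mu^K(dx)-2c_K\langle\phi,\mu^K\rangle .
\end{equation*}
The scaling should be such that $c_Km_K\to1$; I would normalise so that $c_Km_K=1$, up to the factor $(1-e^{-K/c})$ coming from \eqref{eq: exp}. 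The two compensators then combine into the reaction term $\phi(1-2\int_x^\infty\mu^K_s(dy)+2c_K)$ of \eqref{eq: stochdiff}.

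Square integrability comes from the moment bounds. We have $\mathbb{E}\sup_{s\le t}|\mathcal{N}^K_s|^2\le\mathbb{E}|\mathcal{M}_t|^2<\infty$, where $\sup_s|\mathcal{M}_s|=|\mathcal{M}_t|$ since $\mathcal{M}$ is monotone. It also uses $\mathbb{E}\sum_{s\le t}\Delta\nu^b\le\int_0^t\mathbb{E}|\mathcal{M}_s|ds$, and the death count is bounded by the birth count plus $N^K_0$. For the quadratic variation, births, deaths and Brownian increments have no simultaneous jumps or common continuous parts, so the three martingales are orthogonal and the brackets add. Each jump has size $\pm\phi/m_K$, so the compensator of $\sum(\Delta)^2$ is the rate integral with $\phi^2/m_K^2$. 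The Brownian part gives $\frac1{m_K^2}\sum_u\int(\partial_x\phi)^2ds$. I would state the result with all signs positive, since a predictable quadratic variation is a sum of nonnegative terms and the minus signs in the statement look like a misprint.

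The main obstacle is the final claim, $\mathbb{E}\langle L^{K,\phi}\rangle_t\to0$. Each term is at most $\frac{C}{m_K}\int_0^t\mathbb{E}[\langle1,\mu^K_s\rangle(1+\langle1,\mu^K_s\rangle)]ds$. Here I would use the fact that $|\mathcal{N}^K|$ is a stationary LB-process, since it starts from $\pi$. Under stationarity, $\mathbb{E}\langle1,\mu^K_s\rangle=1$ and $\mathbb{E}\langle1,\mu^K_s\rangle^2=\mathbb{E}[(N^K_0)^2]/m_K^2$. For a conditioned Poisson law this is $1+O(1/m_K)$, uniformly in $s$. The bound is therefore $O(t/m_K)\to0$. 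If $L^2$ convergence of the bracket itself is required, I would use the same argument with fourth moments of $N^K_0$, which remain of order $m_K^4$.
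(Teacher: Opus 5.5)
Your proposal is correct, and its skeleton matches the paper's. You write $\langle\phi,\mu^K_t\rangle-\langle\phi,\mu^K_0\rangle$ as an It\^o term for each particle on $[\tau^b_u,\tau^d_u\wedge t)$ plus birth and death jumps, and then compensate the jumps with intensities $1$ and $2c_K\big(\sum_v\mathbb{1}_{\{X_u\le X_v\}}-1\big)$.

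You differ from the paper in the two supporting ingredients.

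For the martingale property of $D^{K,\phi}$, the paper computes $\frac{d}{dt}\mathbb{E}\big(D^{K,\phi}_t-D^{K,\phi}_s\mid\mathcal{F}^K_s\big)$ by a first-event expansion on $[t,t+h]$ and checks that it vanishes. You instead use the standard compensator of a point process with stochastic intensity, together with the localisation $T_n\uparrow\infty$. This is shorter and more rigorous. One small correction: ties do occur at birth instants, but only on a Lebesgue-null set of times, and that is all the compensator requires.

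For the quantitative bounds, the paper goes through the coupling with $N^K_0$ independent BBMs and Lemma \ref{lem: order}. This gives, for instance, $\mathbb{E}[(D^{K,\phi}_t)^2]\le\frac{2tc_K}{m_K^2}\|\phi\|_\infty^2\,\mathbb{E}|\mathcal{M}_t|^2$ and $\mathbb{E}[(M^{K,\phi}_t)^2]\lesssim te^t/m_K$. You instead use that $N^K_t$ is an LB-process started from its stationary law, so its first two moments are constant in time. This yields $\mathbb{E}\langle L^{K,\phi}\rangle_t=O(t/m_K)$, with constants linear in $t$ and no need for Lemma \ref{lem: order}. The paper's route would survive a non-stationary initial population with $\sup_K K^{-p}\mathbb{E}(N^K_0)^p<\infty$, which is the setting of Lemma \ref{lem: vagueapprox}. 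Yours is simpler and sharper in the setting the lemma actually assumes.

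Your three remarks on the statement are also correct, and the paper's proof does not address them:
\begin{itemize}
\item The jump integrals in $B^{K,\phi}$ and $D^{K,\phi}$ need the factor $1/m_K$.
\item The death compensator produces the coefficient $2c_Km_K=2/(1-e^{-K/c})$, not $2$, in front of $\int_x^\infty\mu^K_s(dy)$.
\item $M^{K,\phi}$ is continuous, while $B^{K,\phi}$ and $D^{K,\phi}$ are purely discontinuous with no common jumps. Hence $\langle L^{K,\phi}\rangle=\langle B^{K,\phi}\rangle+\langle M^{K,\phi}\rangle+\langle D^{K,\phi}\rangle$, with all signs positive.
\end{itemize}
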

\noindent
The proof is omitted here and can be found in Appendix \ref{sec: AppdxA}. The following lemma will help us extend convergence under the vague topology to convergence under the weak topology. The proof is postponed to Appendix \ref{appendix: vagueapprox}

\begin{lemma}\label{lem: vagueapprox}
	Consider the function $\psi(x) := 6x^5-15x^4+10x^3$ and define a sequence of functions 
	$\{\psi_n\}_{n\geq 0}\subset C^2_b(\mathbb{R})$ such that 
	\[\psi_n(x): = \psi(0\vee(|x|-(n-1))\wedge1),\qquad n\in \mathbb{N}.\]
	For every $T>0$, the following holds
	\begin{itemize}
		\item[a)]\label{it: a} For every $p\in\mathbb{N}$, there exists a constant $\tilde{C}:=\tilde{C}(p,T)$ such that
		\[\sup_K\mathbb{E}(\sup_{t\leq T}K^{-p}(N_t^K)^p)\leq \tilde{C}\sup_K [K^{-p}\mathbb{E}(N_0^K)^p].\]
		\item[b)]\label{it: b} Let $\mu_t^K$ be a convergent subsequence of the sequence defined by \eqref{eq: defemp0}, then we have   
		\begin{equation}
			\lim_{n\to \infty}\sup_{K} \mathbb{E}\Big(\sup_{t\in[0,T]}\big<\mu^K_t,\psi_n\big>\Big)=0.
		\end{equation}
		\item[c)]\label{it: c} Furthermore, if $u_t$ is the limit in distribution of $\mu_t^K$ in the space $D([0,T],M_F^v(\mathbb{R}))$ then
		\begin{equation}
			\lim_{n\to \infty} \mathbb{E}\Big(\sup_{t\in[0,T]} \big<\mu_t,\psi_n\big>\Big)=0.
		\end{equation}
	\end{itemize}
	
\end{lemma}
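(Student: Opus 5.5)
For a), I will compare with a pure-birth process. Under the coupling of Subsection~\ref{subsec: coupling}, $N^K_t:=|\mathcal{N}^K_t|\le |\mathcal{M}_t|$, and $|\mathcal{M}_t|$ is nondecreasing in $t$. Hence
\[
\sup_{t\le T}(N^K_t)^p\le |\mathcal{M}_T|^p .
\]
Conditionally on $N^K_0=n$, $|\mathcal{M}_T|$ is a sum of $n$ i.i.d.\ geometric variables with parameter $e^{-T}$. For these, $\mathbb{E}[|\mathcal{M}_T|^p\mid N_0^K=n]\le C(p,T)\,n^p$ for $n\ge1$. This follows by expanding the $p$-th power, or from Rosenthal's inequality, or by a Gronwall argument on $\mathbb{E}[(N_t)^p]$ using the generator of the Yule process, whose drift satisfies $(n+1)^p-n^p\le C_p n^{p-1}$. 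Taking expectations and multiplying by $K^{-p}$ gives $\mathbb{E}\sup_{t\le T}K^{-p}(N_t^K)^p\le C(p,T)K^{-p}\mathbb{E}(N_0^K)^p$. Since $C$ does not depend on $K$, I can take the supremum in $K$. The same bound holds with $m_K$ in place of $K$, because $m_K\asymp K$ by \eqref{eq: exp}.

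For b), I plan to apply the semimartingale decomposition of Lemma~\ref{lem: martngl} to $\phi=\psi_n$, which belongs to $C^2_b$ with $\|\psi_n''\|_\infty,\|\psi_n'\|_\infty$ bounded uniformly in $n$. The reaction term is at most $(1+2c_K)\langle\psi_n,\mu^K_s\rangle$, since $-2\int_x^\infty\mu^K_s\le0$ and $\psi_n\ge0$. The diffusion term is bounded by $\tfrac12\|\psi''\|_\infty\,\langle \mathbb{1}_{\{n-1\le|x|\le n\}},\mu^K_s\rangle\le C\langle\psi_{n-1},\mu^K_s\rangle$, because $\psi_{n-1}=1$ on $\{|x|\ge n-1\}$. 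This gives
\[
\mathbb{E}\sup_{t\le T}\langle\psi_n,\mu^K_t\rangle\le \mathbb{E}\langle\psi_n,\mu^K_0\rangle + C\int_0^T \mathbb{E}\sup_{r\le s}\langle\psi_{n-1},\mu^K_r\rangle\,ds+\mathbb{E}\sup_{t\le T}|L^{K,\psi_n}_t| .
\]
By Doob's inequality the martingale term is bounded by $2\,\mathbb{E}[\langle L^{K,\psi_n}\rangle_T]^{1/2}$, which is $O(m_K^{-1/2})$ uniformly in $n$ by a) with $p=2$. The index shift $n\to n-1$ makes Gronwall awkward. To avoid it, I will instead use the cruder bound $|\psi_n''|\le C\psi_{n-1}$ and iterate $j$ times from level $n$ down to $n-j$. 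This produces a sum of terms $\frac{(CT)^i}{i!}\sup_K\mathbb{E}\langle\psi_{n-i},\mu_0^K\rangle$, plus a remainder $\frac{(CT)^j}{j!}\sup_K\mathbb{E}\sup_t\langle 1,\mu^K_t\rangle$, which is finite by a).

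I will then choose $j$ large so that the remainder is small, and then $n$ large, using the tightness of the initial conditions: $\sup_K\mathbb{E}\langle\psi_n,\mu^K_0\rangle\to0$, which is an assumption on $\mu_0^K$ that I take to be implicit. The martingale contributions give terms of order $\sum_i \frac{(CT)^i}{i!}m_K^{-1/2}$, which are handled separately for $K$ large. The finitely many small $K$ are handled directly, since for fixed $K$ the expectation $\mathbb{E}\sup_t\langle\psi_n,\mu_t^K\rangle\to0$ by dominated convergence, the paths being continuous Brownian particles with $\sup_t|X|<\infty$.

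For c), I will use that $\mu\mapsto\sup_{t\le T}\langle\psi_n,\mu_t\rangle$ is lower semicontinuous in $D([0,T],M_F^v)$: $\psi_n$ is a monotone limit of compactly supported functions $\psi_n\chi_m$, each giving a continuous functional, and the supremum of continuous functionals is lower semicontinuous. By Fatou's lemma under convergence in law (via Skorokhod representation), $\mathbb{E}\sup_t\langle\psi_n,\mu_t\rangle\le\liminf_K\mathbb{E}\sup_t\langle\psi_n,\mu^K_t\rangle\le\sup_K(\cdot)$, and b) concludes.

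I expect the main obstacle to be b): the diffusion term leaks mass from level $n-1$ into level $n$, so closing the estimate requires the iterated bound above together with uniform control of the martingale term.
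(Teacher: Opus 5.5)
Your proposal is correct. For a) and b) it follows a different route from the paper; for c) it is essentially the same idea.

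\textbf{Part a).} The paper localizes with $T_n=\inf\{t>0:K^{-1}N^K_t\ge n\}$ and runs a Gronwall argument on $\mathbb{E}\sup_{t\le T\wedge T_n}K^{-p}(N^K_t)^p$. You instead use the monotone coupling $\sup_{t\le T}N^K_t\le|\mathcal{M}_T|$ together with the $p$-th moment of a sum of i.i.d.\ geometric variables; convexity alone gives $\mathbb{E}[(G_1+\dots+G_n)^p]\le n^p\,\mathbb{E}[G_1^p]$. This is more elementary and gives an explicit constant.

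\textbf{Part b).} This is where the difference matters. The paper bounds the Laplacian and competition terms by $C'\big(K^{-1}N^K_s+K^{-2}(N^K_s)^2\big)$, and the martingale by a quantity of the same size. After taking expectations it therefore carries an additive term $CT\,\mathbb{E}\big(1+K^{-2}(N^K_0)^2\big)$ that does not depend on $n$. Gronwall then yields only $e^{CT}\big(\sup_K\mathbb{E}\langle\psi_n,\mu^K_0\rangle+CT\,\mathrm{const}\big)$, not the stated bound $\tilde C\sup_K\mathbb{E}\langle\psi_n,\mu^K_0\rangle$. So the limit $n\to\infty$ does not follow as written.

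Your argument supplies exactly what is missing:
\begin{itemize}
\item The competition term is dropped because it is nonpositive.
\item The Laplacian is localized through $|\psi_n''|\le\|\psi''\|_\infty\mathbb{1}_{\{n-1\le|x|\le n\}}\le C\psi_{n-1}$, and $\psi_n\le\psi_{n-1}$ handles the birth term.
\item The index shift is absorbed by iterating $j$ times. Since $m\mapsto\mathbb{E}\langle\psi_m,\mu^K_0\rangle$ is monotone, the resulting sum is at most $e^{CT}\sup_K\mathbb{E}\langle\psi_{n-j+1},\mu^K_0\rangle$.
\item The martingale is $O(K^{-1/2})$ uniformly in $n$, because its predictable bracket is at most $C m_K^{-2}\int_0^T\big(N^K_s+c_K(N^K_s)^2\big)\,ds$. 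All three contributions to the bracket are nonnegative; the minus signs in the bracket displayed in Lemma~\ref{lem: martngl} are sign errors.
\item The finitely many small $K$ are handled by dominated convergence.
\end{itemize}
The cost is the iteration and the split in $K$, but the estimate closes.

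Your hypothesis $\sup_K\mathbb{E}\langle\psi_n,\mu^K_0\rangle\to0$ is the same implicit assumption the paper makes. The paper obtains it from $\mu^K_0\Rightarrow\mu_0$ together with uniform integrability from a).

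\textbf{Part c).} Your lower semicontinuity plus Fatou argument is the same idea as the paper's truncation $\psi_{n,l}=\psi_n(1-\psi_l)$, and it avoids the paper's uniform integrability step.
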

\noindent
We now establish the tightness of the rescaled empirical measures in the following lemma
\begin{lemma}\label{lem: tighness}
	For any $T>0$, the sequence of rescaled empirical measures $\big\{\big(\mu^{K}_t\big)_{t\in[0,T]}\big\}_{K\in\mathbb{N}} $, defined by equation \eqref{eq: defemp0}, is tight on the space  $D\big([0,T], M^v_F(\mathbb{R})\big)$. 
\end{lemma}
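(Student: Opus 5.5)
We will use the standard criterion of Roelly-Coppoletta for measure-valued processes. Since the vague topology on $M_F(\mathbb{R})$ is generated by a countable dense family $\{\phi_j\}\subset C^2_c(\mathbb{R})$, tightness of $\{\mu^K\}_K$ in $D([0,T],M^v_F(\mathbb{R}))$ reduces to tightness of the real-valued processes $\{\langle \phi,\mu^K_\cdot\rangle\}_K$ in $D([0,T],\mathbb{R})$ for each $\phi\in C^2_c(\mathbb{R})$ (time-independent test functions suffice). For each fixed $\phi$ we verify the two conditions of the Aldous--Rebolledo criterion:
\begin{enumerate}[(i)]
\item compact containment, i.e.\ tightness of $\langle\phi,\mu^K_t\rangle$ for each $t$ in a dense set;
\item the Aldous condition on the finite-variation and martingale parts of the semimartingale decomposition in Lemma \ref{lem: martngl}.
\end{enumerate}

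For (i), note that $|\langle\phi,\mu^K_t\rangle|\le \|\phi\|_\infty N^K_t/m_K$. By \eqref{eq: exp} with $c_K=c/K$, we have $m_K\sim K/c$. Lemma \ref{lem: vagueapprox} a) with $p=2$ then gives $\sup_K\mathbb{E}(\sup_{t\le T}(N^K_t/m_K)^2)<\infty$, provided $\sup_K K^{-2}\mathbb{E}(N^K_0)^2<\infty$. This holds because $N^K_0$ is a conditioned Poisson variable of mean of order $K$, so its second moment is $O(K^2)$. Markov's inequality then yields (i), uniformly in $t\le T$.

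For (ii), we write $\langle\phi,\mu^K_t\rangle=\langle\phi,\mu^K_0\rangle+A^K_t+L^{K,\phi}_t$ as in \eqref{eq: stochdiff}. Let $\tau_K$ be stopping times bounded by $T$ and let $\delta_K\to0$. The drift satisfies the pointwise bound
\[
|A^K_{\tau_K+\delta_K}-A^K_{\tau_K}|\le \delta_K\big(\tfrac12\|\phi''\|_\infty+\|\phi\|_\infty(1+2c_K)\big)\sup_{s\le T}\langle 1,\mu^K_s\rangle+2\delta_K\|\phi\|_\infty\sup_{s\le T}\langle1,\mu^K_s\rangle^2.
\]
Here we used $\int_x^\infty\mu^K_s(dy)\le\langle1,\mu^K_s\rangle$. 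The right-hand side has expectation $O(\delta_K)$ by the moment bound above, so it tends to $0$ in probability.

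For the martingale part, we use the predictable quadratic variation computed in Lemma \ref{lem: martngl}. It is bounded by
\[
C(\phi)\,\delta_K\Big(\tfrac{1}{m_K}\sup_{s\le T}\langle1,\mu^K_s\rangle+\tfrac{c_K}{1}\sup_{s\le T}\langle1,\mu^K_s\rangle^2\Big)
\]
(with the signs read as a sum of nonnegative terms, since each of $B,D,M$ contributes positively). Its expectation is therefore $O(\delta_K/K)$. By the Rebolledo form of Aldous's criterion, $\{\langle L^{K,\phi}\rangle\}$ satisfies the Aldous condition, and hence so does $L^{K,\phi}$. Indeed, Lemma \ref{lem: martngl} already gives $L^{K,\phi}\to0$ in $L^2$, so by Doob's inequality $\sup_{t\le T}|L^{K,\phi}_t|\to0$ in probability, which is even stronger.

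Combining (i) and (ii) gives tightness of each $\langle\phi,\mu^K\rangle$, and Roelly's criterion then yields tightness in $D([0,T],M^v_F(\mathbb{R}))$. The main obstacle is the uniform moment control of the total mass, $\sup_K\mathbb{E}\sup_{t\le T}(N^K_t/m_K)^2$. The quadratic competition term requires a second moment, and the argument above relies entirely on Lemma \ref{lem: vagueapprox} a) together with checking that the stationary initial law has normalized moments bounded in $K$. If needed, we would alternatively dominate $N^K_t$ by the Yule process from the coupling of Subsection \ref{subsec: coupling}, whose normalized moments are $e^{pt}\mathbb{E}(N^K_0/m_K)^p$ up to constants.
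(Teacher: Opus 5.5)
Your proposal is correct and follows essentially the same route as the paper. Both use Roelly-Coppoletta's criterion to reduce to real-valued processes, then apply Rebolledo/Aldous to the drift $A^{K,\phi}$ and to $\langle L^{K,\phi}\rangle$ from Lemma \ref{lem: martngl}, with the uniform mass moments of Lemma \ref{lem: vagueapprox} a) as the key input. Your version is tidier in three places: you control fixed-time tightness through $\|\phi\|_\infty N^K_t/m_K$ and a second-moment bound (the paper's choice $M=\|f\|_\infty/\varepsilon$ ignores the random total mass $\langle 1,\mu^K_t\rangle$), you correctly treat the three contributions to $\langle L^{K,\phi}\rangle$ as nonnegative, and you work directly with $C^2_c$ test functions instead of passing through bounded pointwise limits in $C_b(\mathbb{R})$.
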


\begin{proof}
	For ease of notation, we will denote $\mu^{K}:= \big(\mu^K_t\big)_{t\in[0,T]}$. By \cite[Theorem 2.1]{Roelly‐Coppoletta01041986}, the sequence $\big\{\mu^{K}\big\}_{K\in\mathbb{N}}$ is tight in $D\big([0,T], M^v_F(\mathbb{R})\big)$ if the sequence $\{\big<g_n, \mu^K\big>\}_{K \in \mathbb{N}}$ is tight in $D\big([0,T], \mathbb{R}\big)$ for each $n\in \mathbb{N}$, where $\{g_n\}_{n \in \mathbb{N}}$ is a dense sequence of functions in $C_0( \mathbb{R})$. 
	By Lemma \ref{lem: martngl}, we know that $\big<f, \mu^K\big>$ has a semimartingale representation, for every $f\in C^{1,2}_b(\mathbb{R}_+\times\mathbb{R})$ and $K\in\mathbb{N}$. In particular, for any  $f\in C^{2}_b(\mathbb{R})$ we have that 
	\begin{align*}
		\big<f, \mu^K_t\big> = \big<f, \mu^K_0\big> + A^{K,f}_t + L^{K,f}_t,
	\end{align*}
	where $\big(L^{K,f}_t\big)_{t\in [0,T]}$ is as defined in Lemma \ref{lem: martngl} and the finite variation process  $A_t^{K,f}$ is given by
	\begin{equation}\label{eq : At}
		A^{K,f}_t  :=  \int_{0}^{t}\int_{\mathbb{R}}  \Bigg[\frac{1}{2} \partial_{xx}f(x) + f(x)\Big(1 - 2\int_x^\infty\mu^{K}_s(dy) + 2c_{K}  \Big)\Bigg]\mu^{K}_s(dx)ds. 
	\end{equation}
	\noindent
	Then, by Rebolledo’s criterion \cite[Corollary 2.3.3]{Joffe1986}, to ensure the tightness of the sequence $\big\{\big<f, \mu^K\big>\big\}_{K \in \mathbb{N}}$ in $D\big([0,T], \mathbb{R}\big)$ it is sufficient to show that the sequences  
	$\{A^{K,f}\}_{K\in\mathbb{N}}$ and $\left\{\left<L^{K,f}\right>\right\}_{K\in\mathbb{N}}$ are tight in $D\big([0,T],\mathbb{R}\big)$. Moreover, by Aldous’s tightness criterion (see \cite[Subsection $2.2$]{Joffe1986}), this is guaranteed provided the following conditions hold:
	
	\begin{itemize}
		\item[1.]
		For a dense subset of times, the family $\big\{\big<f, \mu^K_t\big>\big\}_{K\in \mathbb{N}}$ is tight in $\mathbb{R}$; that is, for each $t \in [0,T]\cap\mathbb{Q}$ and every $\varepsilon>0$, there exist a real number $M=M(\varepsilon)>0$ such that
		\begin{align*}
			\sup_{K \in \mathbb{N}} \mathbb{P}\Big(\big| \big<f, \mu^K_t\big> \big|>M\Big)\leq \varepsilon.
		\end{align*}
		
		\item[2.]
		For any collection of stopping times $\{\tau_K\}_{K\in \mathbb{N}}$ that are almost surely bounded by $T$, and every $\varepsilon>0$ 
		\begin{equation}\label{eq: AldousA}
			\lim\limits_{\delta \to 0} \limsup\limits_{K \to \infty}\sup\limits_{\tau_K \leq T} \sup\limits_{h\in [0,\delta]}\mathbb{P}\Big(\big|A_{\tau_K + h}^{K,f} - A_{\tau_K}^{K,f}\big|>\varepsilon\Big)=0,
		\end{equation}
		and 
		\begin{equation}\label{eq: AldousL}
			\lim\limits_{\delta \to 0} \limsup\limits_{K \to \infty}\sup\limits_{\tau_K \leq T} \sup\limits_{h\in [0,\delta]}\mathbb{P}\Big(\big|\big<L^{K,f}_{\tau_K+h}\big> - \big<L^{K,f}_{\tau_K}\big>\big|>\epsilon\Big)=0.
		\end{equation}
	\end{itemize}
	\noindent
	The first condition follows by taking $M= \|f\|_{\infty}/\varepsilon$. We can focus on conditions \eqref{eq: AldousA} and \eqref{eq: AldousL}. Note that for each  $ K \in \mathbb{N}$, $K^{-1}N^K_{t}=\big<1, \mu^K_t\big>$, then there exists constants $C$ and $\tilde{C}$ such that 
	\[\begin{split}
		\mathbb{E}\Big(\Big|A_{\tau_K + h}^{K,f} - A_{\tau_K}^{K,f}\Big|\Big)
		\leq hc\sup_K\Bigg(C\mathbb{E}\Big[\sup_{t\leq T}K^{-1}N^K_{t}\Big]+ 2c\|f\|_\infty\mathbb{E}\Big[\sup_{t\leq T}K^{-2}(N^K_{t})^2\Big]\Bigg),
	\end{split}\]
	and   
	\[\begin{split}
		\mathbb{E}\Big(\Big|\big <L^{K,f}_{\tau_K+h}\big> - \big<L^{K,f}_{\tau_K}\big>\Big|\Big)\leq \frac{c^2h}{K}\sup_K\Bigg(\widetilde{C}\mathbb{E}\Big[\sup_{t\leq T}K^{-1}N^K_{t}\Big]+ 2c\|f^2\|_\infty\mathbb{E}\Big[\sup_{t\leq T}K^{-2}(N^K_{t})^2\Big]\Bigg).
	\end{split}\]  
	Using the Markov inequality and Lemma \ref{lem: vagueapprox}, we can conclude that both conditions \eqref{eq: AldousA} and \eqref{eq: AldousL} hold and consequently $\big\{\big<f, \mu^K\big>\big\}_K$ is tight in $D\left([0,T], \mathbb{R}\right)$ for each $f \in C_b^{2} (\mathbb{R})$. We also know that any function $g\in C_b(\mathbb{R})$ can be obtained as the bounded pointwise limit of a sequence of functions 
	$f_n\in C_b^2(\mathbb{R})$. Hence, we can conclude that the sequence $\big\{\big<g, \mu^K\big>\big\}_K$ is tight on $D\left([0,T], \mathbb{R}\right)$ for each $g \in C_b (\mathbb{R})$. Therefore, $\big\{\big<f, \mu^K\big>\big\}_K$ is tight for a dense sequence $\{g_n\}_{n \in \mathbb{N}}\subset C_0(\mathbb{R})$, and we obtain the desired result.
\end{proof}

For the next lemma, we consider a different representation of equation \eqref{eq: eqmu0}. For $t\leq T$ and $x\in\mathbb{R}$ we define the process $X_t := x+ B_t$, where $(B_t)_{t\geq 0}$ is a standard Brownian motion, and denote by $\mathbb{P}_x$ its law. The associated transition semi-group $(P_t)_{t\geq 0}$ is given by $P_t(x,dy) =\mathbb{P}_x (X_t\in dy)$. Now, for each $s\in [0,t]$ and $f\in C_b(\mathbb{R})$, let $\phi(s,x) = P_{t-s}f(x) = \mathbb{E}_x(f(X_t))$ . By the Feynman-Kac formula, the function $\phi(s,x)\in C^\infty_b([0,t]\times \mathbb{R})$ satisfies $\partial_t\phi(s,x) + \frac{1}{2}\partial_{xx}\phi(s,x)=0$ with final condition $\phi(t,x) = f(x)$. Substituting  this $\phi(s,x)$ into \eqref{eq: eqmu0} we obtain
\begin{equation}\label{eq: mild}
	\begin{split}
		\big<f, \mu_t\big> = \big<P_{t}f, \mu_0\big> + \int_0^t \int_{\mathbb{R}} P_{t-s}f(x)\Big(1  - 2  \int_x^{\infty} \mu_s(dy)\Big)\mu_s(dx)ds
	\end{split}  
\end{equation}

\begin{lemma}\label{lem: uniqueness}
	Let $\mu_0 \in M_F(\mathbb{R})$ be an initial condition.  Let 
	$\mu^1$ and $\mu^2$ be two solutions of \eqref{eq: eqmu0} in $C\left([0,T], M_F(\mathbb{R})\right)$ starting from $\mu_0$. Then, $\mu^1 =\mu^2$ on $[0,T]$.
\end{lemma}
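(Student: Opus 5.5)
Uniqueness will follow from a Gronwall estimate on the mild formulation \eqref{eq: mild}, measured in a norm that controls the nonlocal term. The interaction $\int_x^\infty\mu_s(dy)$ has the non-Lipschitz kernel $\mathbb{1}_{\{y\ge x\}}$, so total variation is the natural choice: $\|\mu-\nu\|_{TV}=\sup_{\|f\|_\infty\le 1}|\langle f,\mu-\nu\rangle|$. Indicators are bounded by $1$, so the kernel is Lipschitz with respect to this norm.

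\textbf{Step 1: a priori mass bound.} Taking $\phi\equiv 1$ in \eqref{eq: eqmu0} gives
\[
\langle 1,\mu^i_t\rangle \le \langle 1,\mu_0\rangle+\int_0^t\langle 1,\mu^i_s\rangle ds ,
\]
since $1-2\int_x^\infty\mu_s(dy)\le 1$. Hence $\sup_{t\le T}\langle1,\mu^i_t\rangle\le \langle1,\mu_0\rangle e^T=:A$ for $i=1,2$.

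\textbf{Step 2: splitting the difference.} Fix $f$ with $\|f\|_\infty\le 1$ and subtract \eqref{eq: mild} for $\mu^1$ and $\mu^2$. The initial terms cancel. Since $\|P_{t-s}f\|_\infty\le1$, the linear part contributes at most $\int_0^t\|\mu^1_s-\mu^2_s\|_{TV}ds$. For the quadratic part, write $G_i(s,x)=\int_x^\infty\mu^i_s(dy)$ and $g=P_{t-s}f$, and decompose
\[
\int gG_1\,d\mu^1_s-\int gG_2\,d\mu^2_s=\int gG_1\,d(\mu^1_s-\mu^2_s)+\int g(G_1-G_2)\,d\mu^2_s .
\]
The first term is at most $\|gG_1\|_\infty\|\mu^1_s-\mu^2_s\|_{TV}\le A\|\mu^1_s-\mu^2_s\|_{TV}$. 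In the second term, $|G_1(s,x)-G_2(s,x)|=|\langle \mathbb{1}_{[x,\infty)},\mu^1_s-\mu^2_s\rangle|\le\|\mu^1_s-\mu^2_s\|_{TV}$ uniformly in $x$, so that term is also at most $A\|\mu^1_s-\mu^2_s\|_{TV}$.

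\textbf{Step 3: Gronwall.} Taking the supremum over $f$ gives
\[
\|\mu^1_t-\mu^2_t\|_{TV}\le(1+4A)\int_0^t\|\mu^1_s-\mu^2_s\|_{TV}ds .
\]
Gronwall's lemma then yields $\mu^1_t=\mu^2_t$ for all $t\le T$.

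\textbf{Main obstacle.} The mild formulation \eqref{eq: mild} was derived only for $f\in C_b$, while the TV supremum and the indicator $\mathbb{1}_{[x,\infty)}$ require bounded measurable test functions. Two points need care.
\begin{itemize}
\item[(a)] Extending \eqref{eq: mild} to bounded measurable $f$. Both sides are finite measures applied to $f$, because $P_{t-s}f$ is again bounded. By a monotone class argument, agreement on $C_b$ gives agreement for all bounded Borel $f$; alternatively, one can approximate by bounded pointwise limits and use dominated convergence.
\item[(b)] Measurability in $s$ of $\|\mu^1_s-\mu^2_s\|_{TV}$, which is needed to integrate it. I would avoid this issue by working with $h(t)=\sup_{s\le t}\|\mu^1_s-\mu^2_s\|_{TV}$, which is bounded by $2A$ and nondecreasing, and applying Gronwall to $h$.
\end{itemize}
The identity $\int_x^\infty\mu_s(dy)=\langle\mathbb{1}_{[x,\infty)},\mu_s\rangle$ is exactly why total variation, rather than a weak (e.g.\ bounded-Lipschitz) metric, is the appropriate norm: a weak metric would not control the discontinuous kernel.
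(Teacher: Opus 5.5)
Your proposal is correct and follows essentially the same route as the paper: the total variation norm, the mild formulation \eqref{eq: mild}, the same add-and-subtract splitting of the quadratic term, and Gronwall's lemma. The differences are technical refinements. You derive the mass bound from \eqref{eq: eqmu0} with $\phi\equiv 1$ instead of assuming it. You pass to bounded Borel test functions by a monotone class argument instead of the paper's Lusin-type approximation. You run Gronwall on $\sup_{s\le t}\|\mu^1_s-\mu^2_s\|_{TV}$. Your constant $1+4A$ is a valid one, whereas the paper's $1+3C_T$ uses $1+C_T$ where $|1-2\int_x^\infty\mu^1_s(dy)|$ is only bounded by $1+2C_T$, a harmless slip.
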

\begin{proof}
	Assume $\mu^1, \mu^2 \in C\left([0,T], M_F(\mathbb{R})\right)$ be two solutions of \eqref{eq: eqmu0} such that $\sup_{t\leq T} \left<1,\mu_t^1+\mu_t^2\right>= C_T<\infty$. We define the variation norm by 
	\[\|\mu^1_t - \mu^2_t\|_{TV} := \sup_{\|f\|_{\infty}\leq 1}\big|\big<f, \mu^1_t - \mu^2_t\big>\big|,\]
	where $\|f\|_{\infty} = \esssup_{x\in \mathbb{R}}|f(x)|$. We want to show that $\|\mu^1_t - \mu^2_t\|_{TV} = 0$ for all $t \in [0,T]$. To prove this, let $g \in C_b(\mathbb{R})$ be such that $\|g\|_{\infty}\leq 1$, this implies that  $\|P_{t-s}g\|_{\infty}\leq 1$. For every $t\leq T$, by \eqref{eq: mild} we have that
	\[
	\begin{split}
		\Big|\big<g,\mu_t^1 -\mu_t^2 \big>\Big| =  \Bigg|&\int_0^t\int_{\mathbb{R}}  P_{t-s}g(x)\Big(1-2\int_x^{\infty} \mu^2_s(dy)\Big)\mu^2_s(dx)ds \\&- \int_0^t\int_{\mathbb{R}} P_{t-s}g(x) \Big(1-2\int_x^{\infty} \mu^1_s(dy)\Big)\mu^1_s(dx)ds\Bigg|
	\end{split}.
	\]
	Adding and subtracting $\int_0^t\int_{\mathbb{R}}  P_{t-s}g(x)\Big(1-2\int_x^{\infty} \mu^1_s(dy)\Big)\mu^2_s(dx)ds$ we obtain the following
	\[
	\begin{split}
		\Big|\big<g,\mu_t^1 -\mu_t^2 \big>\Big| \leq  &\int_0^t\Bigg|\int_{\mathbb{R}} 2P_{t-s}g(x)\int_x^{\infty} \big[\mu^1_s(dy) - \mu^2_s(dy)\big]\mu^2_s(dx)\Bigg|ds \\+ &\int_0^t\Bigg|\int_{\mathbb{R}} P_{t-s}g(x)\Big(1-2\int_x^{\infty} \mu^1_s(dy)\Big)\big[\mu^2_s(dx)-\mu^1_s(dx)\big]\Bigg|ds\\
		& \leq 2C_T\|P_{t-s}g\|_{\infty} \int_0^t \|\mu^2_s-\mu^1_s\| ds+ \|P_{t-s}g\|_{\infty}(1+C_T)\int_0^t \|\mu^2_s-\mu^1_s\|_{TV} ds
	\end{split},
	\]
	taking the supremum over all functions $g$ continuous and bounded, we have 
	\[
	\sup_{g\in C_b(\mathbb{R}),\|g\|_{\infty}\leq 1}|\big<g, \mu_t^1 -\mu_t^2\big|  \leq 
	\big(1+ 3C_T \big) \int_0^t\|\mu^1_s-\mu^2_s\|_{TV}ds.
	\]
	Consider now a general function such that $\|f\|_{\infty}\leq 1$. By a corollary of Lusin's theorem (see \cite[Corollary $2.24$]{Rudin1986}) we know that there exists a sequence $(g_n)_n \subset C_c(\mathbb{R})$ satisfying $\|g_n\|_{\infty}<1$ for every $n$, such that $f(x) = \lim_{n\to\infty}g_n(x)$ for almost every $x\in\mathbb{R}$. Therefore, by the dominated convergence theorem, we have
	\begin{equation}
		\sup_{f \in L^{\infty}(\mathbb{R}),\|f\|_{\infty}\leq 1}|\big<f, \mu_t^1 -\mu_t^2\big>  \leq 
		\big(1+ 3C_T \big) \int_0^t\|\mu^1_s-\mu^2_s\|_{TV}ds
	\end{equation}
	Finally, by Gronwall’s lemma, we conclude that $\|\mu_t^1 -\mu_t^2 \|_{TV} = 0$ for all $t\leq T$. Therefore, uniqueness holds in $[0,T]$.  
\end{proof}

We have now established all the necessary tools to prove Theorem \ref{thm: convemp}
\begin{proof}[Proof of theorem \ref{thm: convemp}]
	Due to Lemma \ref{lem: tighness} and by Prohorov's theorem \cite[Theorem 5.1]{billing}, every sequence has a sub-sequence convergent in law in $D\left([0,T], M^v_F(\mathbb{R})\right)$. We denote also by $\big\{\mu^K\big\}_K$ the convergent subsequence (for notational convenience), and suppose it converges in law to $(\mu_t)_{t\leq T}$ in the space $D\left([0,T], M^v_F(\mathbb{R})\right)$. For any $\nu \in D\left([0,T], M_F^v(\mathbb{R})\right)$, $\phi\in C_b^{1,2}([0,T]\times\mathbb{R})$ and $K>0$, we can define the function
	\begin{equation}\label{hl.26}
		\begin{split}
			\Phi^{1,\phi}_t(\nu) :=& 	\left< \phi, \nu_t\right> - \left< \phi, \nu_0\right>
			-\int_{0}^{t}\int_{\mathbb{R}}  \Big(\partial_t\phi(s, x) +\frac{1}{2} \partial_{xx}\phi(s, x ) \Big) \nu_s(dx)ds   \\
			&-\int_0^t \int_{\mathbb{R}}\phi(s,x)\nu_s(dx)ds  + \int_0^t \int_{\mathbb{R}} 2\int_x^\infty\nu_s(dy)  \phi(s,x)\nu_s(dx)ds \\
		\end{split},
	\end{equation}
	and for the subsequence $\{\mu^K\}_K$, we define 
	\begin{equation}\label{eq: phi2}
		\Phi^{2,\phi,K}_{t}(\mu^K) := -2c_K\int_0^t \int_{\mathbb{R}}   \phi(s,x)\mu_s^K(dx)ds - L_t^{K,\phi}.
	\end{equation}
	From Lemma \ref{lem: martngl}, we know that $\Phi^{1,\phi}_t(\mu^K) + \Phi^{2,\phi,K}_{t}(\mu^K) =0$ for every $K \in \mathbb{N}$, and also that the martingale $L_t^{K,\phi}$ vanish as $K\to \infty$ in mean. In turn, for the integral term on the right hand side of \eqref{eq: phi2} we have  
	\[\mathbb{E}\Bigg(\Bigg|2c_K\int_0^t \int_{\mathbb{R}}   \phi(s,x)\mu_s^K(dx)ds\Bigg|\Bigg) \leq 2c_K\|\phi\|_{\infty}\sup_K \Big[K^{-1}\mathbb{E}\Big(\sup_{t\leq T}(N_t^K)\Big)\Big],\]
	and due to Lemma \ref{lem: vagueapprox} we ensure it also vanish as $K\to \infty$.
	Therefore we have
	\[\lim_{K\to\infty}\mathbb{E}\Big(\Big|\Phi^{2,\phi,K}_{t}(\mu^K)\Big|\Big) =0. \]
	On the other hand, for any $\nu \in D\left([0,T], M_F^v(\mathbb{R})\right)$, there exists a constant $C(t,\phi)$, which is an increasing function of $t$, such that
	\begin{equation*}
		\big|\Phi^{1,\phi}_t(\nu)\big| \leq  C(T,\phi) \sup_{s\in [0,T]} \Big(\big<1,\nu_s\big> + \big<1,\nu_s\big>^2\Big).
	\end{equation*}
	Therefore
	\[\begin{split}
		\mathbb{E}\Big(\big|\Phi^{1,\phi}_t(\mu^K)\big|^{3/2}\Big) & \leq C^{3/2}(T,\phi) \mathbb{E}\Bigg(\bigg[\sup_{s\in [0,T]} \Big(\big<1,\mu^K_s\big> + \big<1,\mu^K_s\big>^2\Big) \bigg]^{3/2}\Bigg).
	\end{split}\]
	By taking the supremum in $K$, and from the fact that for $x>0$ the function $x\to x^{3/2}$ is increasing and continuous we obtain
	\[
	\sup_K\mathbb{E}\Big(\big|\Phi^{1,\phi}_t(\mu^K)\big|^{3/2}\Big) \leq \widetilde{C}^{3/2}(T,\phi) \sup_K\bigg[K^{-3}\mathbb{E}\bigg(\sup_{s\in [0,T]}  (N_s^K)^3\bigg)\bigg].
	\]
	In  Lemma \ref{lem: vagueapprox} we showed that the bound above is finite, and therefore the sequence $\{\Phi^{1,\phi}_t(\mu^K)\}_K$ is uniformly integrable. We know also that the function $\Phi^{1,\phi}_t(\cdot)$ is continuous. Therefore, we have
	\[\lim_{K\to\infty}\mathbb{E}\Big(\big|\Phi^{1,\phi}_t(\mu^K)\big|\Big)  =\mathbb{E}\Big(\lim_{K\to\infty}\big|\Phi^{1,\phi}_t(\mu^K)\big|\Big) = \mathbb{E}\Big(\big|\Phi^{1,\phi}_t(\mu)\big|\Big) .\]
	From this, we deduce that
	\[\lim_{K\to\infty}\mathbb{E}\Big(\big|\Phi^{1,\phi}_t(\mu^K) + \Phi^{2,\phi,K}_{t}(\mu^K)\big|\Big)  =\mathbb{E}\Big(\big|\Phi^{1,\phi}_t(\mu)\big|\Big). \]
	Since $\mathbb{E}\Big(\big|\Phi^{1,\phi}_t(\mu^K) + \Phi^{2,\phi,K}_{t}(\mu^K)\big|\Big)= 0$ for every $K \in \mathbb{N}$, it follows that
	$\mathbb{E}\Big(\big|\Phi^{1,\phi}_t(\mu)\big|\Big)=0$ almost surely. Consequently, $\Phi^{1,\phi}_t(\mu)=0$ and hence $\mu$ satisfies the equation \eqref{eq: eqmu0}.

	To extend this to the space $D\left([0,T], M_F(\mathbb{R})\right)$ (see Notation \ref{not}),  we use the following result that was first proved in \cite{meleard1993convergences} and is stated here as presented in \cite{Fontbona2013}. Namely, in addition to the convergence in $D\left([0,T], M_F^v(\mathbb{R})\right)$, the convergence in the space $D\left([0,T], M_F(\mathbb{R})\right)$ is ensured if the following two conditions holds:
	\begin{enumerate}
		\item[i)]  the process $(\mu_t)_{t\in[0,T]}$ is continuous in  $D\left([0,T], M_F(\mathbb{R})\right)$ ,
		\item[ii)]  the sequence $\big\{\big(\big<1,\mu^{K}_t\big>\big)_{t\in[0,T]} \big\}_{K\in\mathbb{N}}$ converges in law to the process $\big(\big<1,\mu_t\big>\big)_{t\in[0,T]} $ in the space $D([0,T],\mathbb{R})$. 
	\end{enumerate}
	We start with condition i).
	For any $f\in C_0(\mathbb{R})$ and $K\in\mathbb{N}$, consider the process $\big(\big<f,\mu_t^K\big>\big)_{t\in[0,T]}$ and the function $J: D([0,T],\mathbb{R})\to \mathbb{R}$  such that
	\[J\Big(\big<f,\mu^K\big>\Big):=\int_0^T e^{-u}\bigg(\sup_{t\leq u}\left|\left<f,\mu_t^K\right> - \left<f,\mu_{t-}^K\right>\right|\wedge 1\bigg)du\]
	Given that we weighted each particle by $1/m_K$, every jump will be of order $c/K$ and then
	\[\sup_{t\leq T}\Big|\left<f,\mu_t^K\right> - \left<f,\mu_{t-}^K\right>\Big|\leq \frac{c\|f\|_{\infty}}{K}\]
	Therefore, $J\big(\left<f,\mu^K\right>\big)\leq cK^{-1}T\|f\|_{\infty}$ which vanish as $K\to \infty$. By \cite[Theorem $10.2$]{Ethier2005}  we can say that $\left<f,\mu\right>\in C([0,T],\mathbb{R})$, i.e. 
	\begin{equation}\label{eq : vague continuity}
		\sup_{t\leq T}|\left<f,\mu_t\right> - \left<f,\mu_{t-}\right>|=0 \qquad \text{a.s.}, \quad f\in C_0(\mathbb{R}). 
	\end{equation}
	To extend the continuity to the space $D\big([0,T], M_F(\mathbb{R})\big)$, consider now a function $g\in C_b(\mathbb{R})$, the sequence $\{\psi_n\}_{n\geq 0}$ as defined in Lemma \ref{lem: vagueapprox}, and also the functions $g(1-\psi_n)\in C_0(\mathbb{R})$ for a sufficiently large $n$.  We have then
	\begin{equation*}
		\begin{split}
			|\left<g,\mu_t\right>-\left<g,\mu_{t-}\right>| &\leq |\left<g,\mu_t\right>-\left<g(1-\psi_n),\mu_{t}\right>|+ |\left<g(1-\psi_n),\mu_t\right>-\left<g(1-\psi_n),\mu_{t-}\right>|  \\
			&+ |\left<g(1-\psi_n),\mu_{t-}\right>-\left<g,\mu_{t-}\right>| \\
			& \leq \|g\|_{\infty}\left<\psi_n,\mu_t\right>+ |\left<g(1-\psi_n),\mu_t\right>-\left<g(1-\psi_n),\mu_{t-}\right>|+ \|g\|_{\infty}\left<\psi_n,\mu_{t-}\right>
		\end{split}.
	\end{equation*}
	By \eqref{eq : vague continuity}, we have that the second term is zero as $n$ goes to infinity. Also,  we know the first and third term go to zero as $n\to\infty$ due to Lemma \ref{lem: vagueapprox}, therefore we can conclude that 
	\[
	\sup_{t\leq T} |\left<g,\mu_t\right>-\left<g,\mu_{t-}\right>| =0, \qquad \text{a.s.},\quad g\in C_b(\mathbb{R}).
	\]
	To stablish condition  ii), consider a bounded $C$-Lipschitz continuous function $F:\mathbb{D}([0,T],\mathbb{R})\to\mathbb{R}$. For every $K,n\geq 1$ we have
	\begin{equation}
		\begin{split}
			\left|\mathbb{E}\left(F\left(\left<1,\mu^K\right>\right)\right)  -\mathbb{E}\left(F\left(\left<1,\mu\right>\right)\right)\right|  & \leq  \left|\mathbb{E}\left(F\left(\left<1,\mu^K\right>\right)\right)  -\mathbb{E}\left(F\left(\left<1-\psi_n,\mu^K\right>\right)\right)\right| \\
			&+ \left|\mathbb{E}\left(F\left(\left<1-\psi_n,\mu^K\right>\right)\right)  -\mathbb{E}\left(F\left(\left<1-\psi_n,\mu\right>\right)\right)\right|\\
			&+ \left|\mathbb{E}\left(F\left(\left<1-\psi_n,\mu\right>\right)\right)  -\mathbb{E}\left(F\left(\left<1,\mu\right>\right)\right)\right|\\
			&\leq  C\sup_K\mathbb{E}\left(\sup_{t\leq T}\left<\psi_n,\mu^K\right>\right) \\
			&+ \left|\mathbb{E}\left(F\left(\left<1-\psi_n,\mu^K\right>\right)\right)  -\mathbb{E}\left(F\left(\left<1-\psi_n,\mu\right>\right)\right)\right|\\
			&+ C\mathbb{E}\left(\sup_{t\leq T}\left<\psi_n,\mu\right>\right)
		\end{split}.
	\end{equation}
	Since $1-
	\psi_n\in C_0(\mathbb{R})$, and the functions $\nu\to \left<f,\nu\right>$ are continuous, the second term vanishes as $K\to \infty$, by vague convergence. For the first and third terms, Lemma \ref{lem: vagueapprox} ensures that both vanish in the limits $n\to \infty$ and $K\to \infty$. So we can conclude
	\[\lim_{K\to\infty} \mathbb{E}\left(F\left(\left<1,\mu^K\right>\right)\right) = \mathbb{E}\left(F\left(\left<1,\mu\right>\right)\right). \]
	
	Having established the weak convergence of the sequence $\{\mu^K\}_K$ to the limiting probability measure $\mu$, we now investigate whether 
	$\mu$ is absolutely continuous w.r.t. the Lebesgue measure. It is known that for each time $t>0$, the transition semigroup $P_t(x,dy)$ (of the Brownian motion) has, for each $x$, a unique density function $p_t(x,y)$ w.r.t. Lebesgue measure. This implies that, for every $f\in C_b(\mathbb{R})$,
	\begin{equation}\label{eq: semigroup}
		P_t f(x) = \int_{\mathbb{R}}f(y)p_t(x,y)dy    .
	\end{equation}
	Assume $\mu_t$ is a solution of \eqref{eq: mild} such that $\mu_0$ has a density w.r.t. the Lebesgue measure. Using the representation in \eqref{eq: semigroup}, we can write the equation \eqref{eq: mild} as follows
	\begin{equation*}
		\begin{split}
			\int_{\mathbb{R}}f(x) \mu_t(dx) = &\int_{\mathbb{R}} \left(\int_{\mathbb{R}}f(y)p_t(x,y)dy\right) \mu_0(dx) \\&+ \int_0^t \int_{\mathbb{R}} \left(\int_{\mathbb{R}}f(y)p_{t-s}(x,y)dy\right)\left(1  - 2  \int_x^{\infty} \mu_s(dy)\right)\mu_s(dx)ds
		\end{split} .   
	\end{equation*}
	Using Fubini's theorem we can ensure there exists a function $u \in L^{\infty}([0,T],L^1(\mathbb{R}))$ that satisfies 
	\[\int_{\mathbb{R}}f(x)\mu_t(dx) = \int_{\mathbb{R}}f(y)u(t,y)dy,\]
	and therefore, by \cite[Theorem $1.2$]{billing}, $\mu_t(dx) = u(t,x)dx$, so we can conclude $\mu_t$ is absolutely continuous  w.r.t. the Lebesgue measure.
\end{proof}

\section{Weak selection principle for Logistic branching Brownian motion}\label{microweak}
In order to prove Theorem \ref{thm: speed}, we show the existence of the limiting velocity for each competition rate $c>0$ by studying the successive return times of the population size to one. These return times define i.i.d. segments of the trajectory, and therefore we use the law of large numbers to prove the existence of an asymptotic velocity. 
Finally, we establish the lower bound of the limit in item ii) of Theorem \ref{thm: speed} by studying the stationary measure of the process $(\boldsymbol{X}^c(t))_{t\geq 0}$ as seen from the leftmost particle. 
The upper bound follows from the monotonous coupling between the Log-BBM and the BBM described in Subsection \ref{subsec: coupling}.
\subsection{The Log-BBM as seen from its minimum}\label{subsec: seen-min}
We will begin by defining the process we are interested in. For each time $t$, we consider the process $\{X^c_{(i)}(t), i=1,\dots,N^c_t\}$  of the order statistics of $\boldsymbol{X}^c(t)$, defined recursively as follows. 
The first order statistic is given by 
\[
X_{(1)}^c(t) = \min_{u \in \mathcal{N}_t^c} X^c_u(t),
\] 
which correspond to the particle indexed by $u_1 = \arg\min_{u \in \mathcal{N}_t^c} X^c_u(t)$. In general, given the $i$-th order statistic, we define the $(i+1)$-th as 
\[
X^c_{(i+1)}(t) = \min_{u \in \mathcal{N}_t^c\setminus\{u_1, \cdots, u_{i}\}} X^c_u(t),
\] 
which correspond to the particle indexed by $u_{i+1} = \arg\min_{u \in \mathcal{N}_t^c\setminus\{u_1, \cdots, u_{i}\}} X^c_u(t)$, and it is clear that the last order statistic $X^c_{N^c_t}(t)$  correspond to the  $\max_{u \in \mathcal{N}_t^c} X^c_u(t)$. Hence, we define the {\bf  Logistic-BBM process as seen from the minimum} by 
\begin{equation*}
	\boldsymbol{Z}^c(t) := \{Z^c_i(t) = X^c_{(i)}(t) - X^c_{(1)}(t), 1 \leq i \leq N_t^c\}, \qquad t\geq 0.
\end{equation*} 
The process $(\boldsymbol{Z}^c(t))_{t\geq 0}$ is also a Markov process, with state space $(\mathbb{E},\mathcal{B}(\mathbb{E}))$, where 
\[
\mathbb{E} := \bigcup_{n\in \mathbb{N}} \big\{ \{n\}\times\{(0, z_2, \cdots, z_n ) \in \mathbb{R}^n: 0\leq z_2\leq \cdots \leq z_n \}\big\}
\]
and $\mathcal{B}(\mathbb{E})$ is the Borel $\sigma$-field generated on $\mathbb{E}$. Note that, endowed with the disjoint union topology, the space $\mathbb{E}$ is a locally compact separable metric space. The following result ensures that $\boldsymbol{Z}$ admits a limit distribution. 
\begin{proposition}\label{prop: harris}
	The process $(\boldsymbol{Z}^c(t))_{t\geq 0}$ has an unique stationary distribution $\boldsymbol{\nu}^c$ on $\mathbb{E}$, such that 
	\[\boldsymbol{\nu}^c(dx_1,\cdots,dx_n,n) = \boldsymbol{\nu}^{c,n}(dx_1,\cdots,dx_n)\pi(n),\]
	such that $\int_{\mathbb{R}^n}  \boldsymbol{\nu}^{c,n}(dx_1,\cdots,dx_n)=1$ for each $n\in \mathbb{N}$. In addition, for any initial distribution $\boldsymbol{\psi}_{\pi}$,
	\begin{equation}
		\lim_{t \rightarrow\infty} \| P_{\boldsymbol{\psi}_{\pi}} \left(\boldsymbol{Z}^c(t) \in \cdot\right) - \boldsymbol{\nu}^c(\cdot)\|_{TV} =0.
	\end{equation}
\end{proposition}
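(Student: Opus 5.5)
The plan is to use the regenerative structure provided by the LB-process $N^c_t=|\mathcal{N}^c_t|$, which by Subsection \ref{LB} is positive recurrent on $\mathbb{N}$ with stationary law $\pi$ given in \eqref{eq: mu}. The key observation is that at any time $t$ with $N^c_t=1$, the state of $\boldsymbol{Z}^c(t)$ is the single point $\{1\}\times\{0\}\in\mathbb{E}$, irrespective of the past. Thus this state is an atom for $\boldsymbol{Z}^c$. Since the jump rates of $N^c$ do not depend on positions (branching at rate $n$, death at rate $cn(n-1)$), the successive entrance times $T_0<T_1<\dots$ of $N^c$ into $\{1\}$ are stopping times for $\boldsymbol{Z}^c$. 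By the strong Markov property, the excursions of $\boldsymbol{Z}^c$ between them are i.i.d. This makes $\boldsymbol{Z}^c$ a classical regenerative (equivalently, Harris recurrent with an accessible atom) process.

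The steps are as follows. \emph{(1)} I would verify $\mathbb{E}[T_1-T_0]<\infty$. This is the mean return time of a positive recurrent birth–death chain to state $1$, equal to $1/(\pi_1 q_1)$ with $q_1=1$ the exit rate from $1$, hence finite. I would also check that the first hitting time $T_0$ of $1$ is a.s. finite from any initial configuration with $N^c_0\sim\pi$. \emph{(2)} I would note that the inter-regeneration time is non-lattice, since it contains an $\mathrm{Exp}(1)$ holding time at state $1$, so its law is spread out. \emph{(3)} I would define the candidate measure by the cycle formula
\[
\boldsymbol{\nu}^c(A):=\frac{1}{\mathbb{E}[T_1-T_0]}\,\mathbb{E}\Big[\int_{T_0}^{T_1}\mathbb{1}_{\{\boldsymbol{Z}^c(s)\in A\}}ds\Big],\qquad A\in\mathcal{B}(\mathbb{E}).
\]
Standard regenerative theory then shows that it is stationary, and uniqueness follows because any stationary law must charge the atom and is therefore given by the same cycle formula. \emph{(4)} For convergence in total variation, I would apply the renewal-theoretic coupling (Thorisson's theorem for regenerative processes with spread-out cycle lengths). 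Two copies started from $\boldsymbol{\psi}_\pi$ and from $\boldsymbol{\nu}^c$ can be coupled so that their regeneration epochs eventually coincide, after which the paths agree. Since $\|P_{\boldsymbol{\psi}_\pi}(\boldsymbol{Z}^c(t)\in\cdot)-\boldsymbol{\nu}^c\|_{TV}$ is bounded by the probability that coupling has not occurred by time $t$, it tends to $0$.

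\emph{(5)} For the product form, I would project onto the first coordinate. The marginal of $\boldsymbol{\nu}^c$ on $n$ is the stationary law of $N^c$, namely $\pi$. Disintegrating then gives $\boldsymbol{\nu}^c(dx,n)=\boldsymbol{\nu}^{c,n}(dx)\pi(n)$, with $\boldsymbol{\nu}^{c,n}$ the conditional law of the spacings given $n$ particles, which is a probability measure.

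The main obstacle I expect is step (4), the non-lattice and coupling argument. It requires care that the regeneration is a true atom in the continuous-time setting, for which the exponential holding time makes the cycle law absolutely continuous. It also requires that the delay $T_0$ be a.s. finite under $\boldsymbol{\psi}_\pi$. If needed, I would alternatively invoke Harris recurrence with the atom as petite set together with aperiodicity, via Meyn–Tweedie, to obtain the total variation convergence.
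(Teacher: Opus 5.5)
Your proof is correct, but it takes a different route from the paper.

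\textbf{The paper's route.} It works with the $T$-skeleton $\{\boldsymbol{Z}^c(kT)\}_k$ and Meyn--Tweedie theory, in four steps.
\begin{enumerate}
\item It shows that the set $R$ of configurations with $m=\lceil\mathbb{E}(N^c_0)\rceil$ particles and gaps in $[0,m]$ is small. The minorization is a multiple of Lebesgue measure on $R$, obtained from Brownian transition densities on the event of no branching and no competition.
\item It shows that $R$ is reached by first collapsing to one particle and then producing $m-1$ births with no competition and small displacements.
\item It deduces Harris recurrence, positive recurrence from a bound on mean return times to $R$, and strong aperiodicity from $\phi_1(R)>0$.
\item It transfers ergodicity from the skeleton to continuous time.
\end{enumerate}

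\textbf{Your route.} You use the one-particle state $\{1\}\times\{0\}$ as a genuine atom. Its hitting times are those of the autonomous birth--death chain $N^c$, so every input reduces to facts about $N^c$: recurrence gives an a.s.\ finite delay; the mean cycle length is $1/\pi_1<\infty$; and the independent $\mathrm{Exp}(1)$ holding time at $1$ makes the cycle law spread out, which is exactly what Thorisson's coupling needs for total variation convergence.

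\textbf{What your approach gains.}
\begin{itemize}
\item No minorization or Brownian density estimates are needed.
\item You need no hitting-probability lower bound that is uniform in the current configuration. The paper's iteration $\mathbb{P}(\tau_R>2n)\le(1-C_3)^n$ needs such a bound at every step, whereas its $C_1(T,c)$ is only an average over $n\sim\pi$. That step therefore rests on a coming-down-from-infinity estimate, which your argument avoids.
\item Convergence holds from every initial law, not only those with $N^c_0\sim\pi$.
\item You get an explicit cycle-formula description of $\boldsymbol{\nu}^c$, and the product form follows by projecting onto $N^c$.
\item It matches the renewal decomposition at returns of $N^c$ to one that the paper uses anyway in Subsection~\ref{subsec: existence}.
\end{itemize}

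\textbf{What the paper's approach offers.} The small-set method is in principle more general. It does not require an atom, so it would cover selection models whose population never drops to a single particle, and it is the natural setting for drift-based convergence rates. In this model, however, the paper's own accessibility argument for $R$ already passes through your atom.
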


\begin{proof}[Proof of Proposition \ref{prop: harris}]    To stablish this result, we will need some notation. Let the Markov chain $\{\mathbf{Z}^c_k\}_{k\in \mathbb{Z}_+}$ be the process obtained by sampling $(\mathbf{Z}^c(t))_{t\geq 0}$ at times $\{kT\}_{ k\in \mathbb{Z}_+}$. Denote by $P^t(\boldsymbol{\xi},A)$ the transition kernel of $(\mathbf{Z}^c(t))_{t\geq 0}$ for a configuration $\boldsymbol{\xi}\in\mathbb{E}$. Then, the sampled chain $\{\mathbf{Z}^c_k\}_{k\in \mathbb{Z}_+}$ has transition kernel $ K(\boldsymbol{\xi},A):= P^T(\boldsymbol{\xi},A).$ For this sampled chain, a set $R\in \mathcal{B}(\mathbb{E})$ is said to be small if there exist $r\in \mathbb{N}$ and a non-trivial measure $\phi_r$ such that  
	\begin{equation}\label{eq: smallset}
		K^r(\boldsymbol{\xi},A) \geq  \phi_r(A), \quad \forall \boldsymbol{\xi} \in R, \quad \ \text{and} \ \forall A\in\mathcal{B}(\mathbb{E})    
	\end{equation}
	with $K^r(\boldsymbol{\xi}, A) := P^{rT}(\boldsymbol{\xi}, A)$. When condition \eqref{eq: smallset} holds we say that $R$ is a $\phi_r$-small set. Set now
	\begin{equation}\label{eq: R}
		m:=\lceil\mathbb{E}(N_0^c)\rceil \quad \text{ and } \quad R := \{ \boldsymbol{\xi} \in \mathbb{E}: |\boldsymbol{\xi}| = m\ \text{and}\ \xi_{i+1} -\xi_i \in [0,m] \quad \forall i=1,\cdots, m-1\}     
	\end{equation}
	
	We begin by outlining the main steps of the proof.
	\begin{itemize}
		\item[i.]    We prove that there exists a non‑trivial measure $\phi_1$ such that the (closed) set $R \in \mathcal{B}(\mathbb{E})$ is $\phi_1-$small. Moreover,
		we show that \[\mathbb{P}_{\boldsymbol{\xi}}(\tau_R < \infty) =1, \quad \ \text{for all}\ \boldsymbol{\xi} \in \mathbb{E},\]
		where $\tau_R = \inf \{ k\geq 1: \mathbf{Z}^c_k \in R\}$. According to \cite[Theorem 1.1]{Meyn1993} these conditions ensure that the Markov chain $\{\boldsymbol{Z}^c_k\}_{k\in \mathbb{Z}_+}$ is Harris recurrent. Furthermore, by \cite[Theorem 3.1]{Meyn1993}, the existence of a Harris‑recurrent skeleton is sufficient for the Harris recurrence of the continuous‑time process. As a consequence, we have that there exists a unique invariant measure $\widetilde{\boldsymbol{\nu}}^c$.\label{en:i}
		\item[ii.] Next, we prove that if  we denote by $\tau_R(s)$ the first return time to $R$ after $s$, for some $s > 0$, then 
		\begin{equation}\label{eq: positiverecurr}
			\sup_{\boldsymbol{\xi} \in R} \mathbb{E}_{\boldsymbol{\xi}}(\tau_R(s)) < \infty. 
		\end{equation}
		Due to \cite[Theorem 1.2]{Meyn1993}, condition \eqref{eq: positiverecurr} ensures that the process $({Z}^c(t))_{t\geq 0}$ is positive Harris recurrent. Consequently, the invariant measure $\widetilde{\boldsymbol{\nu}}^c$ is finite and can therefore be normalized to obtain a probability measure $\boldsymbol{\nu}^c$. \label{en:ii} 
		\item[iii.] In order to prove ergodicity of the process $(\mathbf{Z}^c_t)_{t\geq 0}$, it is sufficient to establish the ergodicity of one of its skeleton chains \cite[Implication 20.25 $\Rightarrow$ 20.26]{Meyn1993c}. Therefore, we focus on the sampled chain $(\boldsymbol{Z}^c_k)_{k\in\mathbb{Z_+}}$ defined previously. For this chain to be ergodic, it must satisfy, in addition to positive recurrence, that it is also strongly aperiodic \cite[Theorem 13.3.1]{Meyn1993c}. We have already proved in Step~\ref{en:i} the existence of a $\phi_1$-small set $R$ such that $\phi_1(R)>0$, and by \cite[Subsection 5.4.3]{Meyn1993c} this is sufficient to establish strong aperiodicity.
		\label{en:iii}  
	\end{itemize}
	
	We now focus on the first part of Step (i).
	We want to prove that $R$ is indeed a small set for the sampled chain $\{\mathbf{Z}^c_k\}_{k\in\mathbb{Z}_+}$. If $A \subset  R$, we have that $K^1(\boldsymbol{\xi},A)$ is bounded from below by the probability that there is no branching nor competition before $T$ and in this time the process $\mathbf{Z}^c$ moves from $\boldsymbol{\xi} \in R$ to $A$. In this case, if we take $\tilde{\boldsymbol{x}}(T):=\boldsymbol{\eta} + \boldsymbol{\widetilde{B}}_T$  where $\boldsymbol{\widetilde{B}}$ is an $m$-dimensional Brownian motion and $\eta\in \mathbb{R}^m$ such that $\xi_i = \eta_{(i)} - \eta_{(1)}$, and we set
	\[\begin{split}
		E_{A}:=\left\{\left(\tilde{\boldsymbol{x}}_{(1)}(T),\dots,\tilde{\boldsymbol{x}}_{(m)}(T)\right)\in A + \left(\tilde{\boldsymbol{x}}_{(1)}(T),\dots,\tilde{\boldsymbol{x}}_{(1)}(T)\right)\right\},   
	\end{split}
	\] we have that 
	\begin{equation} \label{eq: mu }
		K^1(\boldsymbol{\xi},A) \geq  e^{-\kappa_{m} T} \mathbb{P}(E_A),				
	\end{equation}
	where $\kappa_{m} =m( 1  + c(m-1))$.
	Let $f_T(y -x )$ be the transition density for a single
	one-dimensional Brownian motion run for time $T$, starting from $x \in \mathbb{R}$, then 
	\[\begin{split}
		\mathbb{P}(E_A) =\int\sum_{\sigma\in P_m}\prod_{j=1}^{m} \mathbb{1}_{\{y_i - y_1\in A\}}f_T(y_i -\eta_{\sigma(i)})\mathbb{1}_{\{y_1\leq y_2 \leq \dots \leq y_{m} \}}dy_1dy_2\dots dy_{m} 
	\end{split}\]
	where $P_m$ is the set of all permutations of $\{1,\cdots,m\}$. Making the change of variables $u_i = y_i - y_1$ and $z=y_1 - \eta_{(1)}$, and 
	using that $\xi_{\sigma(i)} =\eta_{\sigma(i)}-\eta_{(1)}$, we obtain 
	\[\begin{split}
		\mathbb{P}(E_A) =\sum_{\sigma\in P_m}\int_{-\infty}^{\infty}f_T(z -\xi_{\sigma(1)})\int_A\prod_{j=2}^{m} f_T(u_i +z -\xi_{\sigma(i)})\mathbb{1}_{\{0\leq u_2 \leq \dots \leq u_{m} \}}du_2\dots du_{m}dz
	\end{split}\]
	Furthermore, we know that on $A$, $u_i \leq m(i-1)\leq m(m-1)$,  and that the same bound holds for $\xi_{\sigma(i)}$ on $R$. Therefore, \[f_T(u_i + z -\xi_{\sigma(i)})\geq \inf_{|w|\leq 2m(m-1)}f_T(w +|z|)=f_T(2m(m-1) +|z| ).\]
	Hence,
	\[\begin{split}
		\mathbb{P}(E_A) &\geq \sum_{\sigma\in P_m}\int_{-\infty}^{\infty}f_T(z -\xi_{\sigma(1)})f_T(2m(m-1) +|z| )^{m-1}\int_A \mathbb{1}_{\{0\leq u_2 \leq \dots \leq u_{m} \}}du_2\dots du_{m}dz\\
		& \geq\lambda(A)\int_{-\infty}^{\infty}f_T(z )f_T(2m(m-1) +|z| )^{m-1}dz
	\end{split}\]
	where in the last inequality we use that the sum over all permutations is bounded below by any particular term; here we choose one such that $\xi_{\sigma(1)}=0$. Setting 
	\begin{equation}
		p =  \int_{-\infty}^{\infty}f_T(z )f_T(2m(m-1) +|z| )^{m-1}dz
	\end{equation}
	we can take $\phi_1(\cdot) = p \lambda(R \cap \cdot)$ with  $\lambda$ the Lebesgue measure. Therefore, $R$ is a $\phi_1$-small set.
	
	Having established the existence of the small set, we now turn to the second part of Step (i), that is, showing that the chain hits the set $R$ in finite time almost surely, regardless of the initial configuration. Starting from any initial configuration $\boldsymbol{\xi}_{N^c_0}$ (which means that $|\boldsymbol{\xi}| =N^c_0$ and $N^c_0 \sim \pi$) the process could get into $R$ by killing all the initial particles (but one) and from that point have at least $m-1$ births and no deaths and the corresponding movement of the particles does not leave $(-1/2,1/2)$ between the birth times of the particles.
	
	Denote now by $\boldsymbol{\xi}^n$ an initial configuration of the particles when $|\boldsymbol{\xi}|=n, n>1$. Each initial particle $u \in \mathcal{N}^c_0$ has $n-1$  i.i.d. competition times $\{J_{u,v}^c\}_{v \in \mathcal{N}_0 \setminus \{u\}}$, where $J_{u,v}^c \sim \text{Exp}(c)$; and a given  reproduction time $J_u^{b} \sim \text{Exp}(1)$. This reproduction time can be equivalently written as
	\[J_u^{b} = \inf\{t>0: \exists v\in \mathbb{T}^c\ \text{such that}\ v \ \text{is a child of} \ u \ \text{with}\ \tau_v^b=t \}\] 
	in agreement with the construction given in Subsection \ref{subsec: coupling}. Define the discrete time process $\{\widetilde{N}^c_k\}_{k\geq 1}$
	such that $\widetilde{N}^c_k := N^c_{kT}$ for $k\geq 0$ and \[\tau_{1} := \inf\{k>0: \widetilde{N}^c_k=1\}.\] 
	We want to find a lower bound for the probability of the event $\{\tau_1 = 1\}$. If we define for the process $\boldsymbol{Z}(t)$ the events 
	\begin{equation*}
		\begin{split}
			A_1 &:=\left\{\text{ there is no reproduction event before time $T$}\right\}
			,\\
			A_2&:=\left\{\text{there are exactly $n - 1$ competition events in the time interval $[0,T]$}\right\} ; 
		\end{split}
	\end{equation*}
	then it is clear that $A_1 \cap A_2 \subset\{\tau_1 = 1\}$. Note that $\mathbb{P}_{\boldsymbol{\xi}^n}(A_1) = \exp\{-T n\}$.
	
	On the other hand, let $S_1^c$ be the first time at which a competition event occurs, and let $\{S_k^c\}_{k \geq1}$ be the subsequent competition times. We write $A_2 = \{S_{n -  1}^c \leq T\}$, and hence
	\[\mathbb{P}_{\boldsymbol{\xi}^n}(\tau_1 = 1) \geq \mathbb{P}_{\boldsymbol{\xi}^n}\big(A_1\big)\mathbb{P}_{\boldsymbol{\xi}^n}\big(S_{n -  1}^c \leq T \big|A_1\big).\] 
	For $ i \geq 1$, define the inter-competition times by $\Delta^c_i := S_{i}^c - S_{i-1}^c$. Conditional on $A_1$, the random variables $\{\Delta^c_i\}_{i\geq 1}$ are independent, with  $\Delta^c_i \sim \text{Exp}(c(n-i+1))(n-i))$. In particular, $S_{n-1}^c =\sum_{k=1}^{n -  1}\Delta^c_k$, and we have that
	\begin{equation}
		\begin{split}
			\mathbb{P}_{\boldsymbol{\xi}^n}\big(S_{n -  1}^c \leq T \big|A_1\big) 
			\geq \mathbb{P}_{\boldsymbol{\xi}^n} \Bigg(\max\limits_{k=1,\dots,n-1} \Delta^c_k \leq \frac{T}{n-1}\Bigg| A_1 \Bigg)
			= \exp\Bigg\{- \dfrac{cTn(n+1)}{3}  \Bigg\} 
		\end{split}	
	\end{equation} 
	Therefore, this allows us to conclude that  
	\begin{equation}
		\begin{split}
			\mathbb{P}_{\boldsymbol{\xi}_{N^c_0}}(\tau_1 = 1)  \geq  \frac{1}{1-e^{-1/c}}\sum_{n=1}^{\infty} \frac{\Bigg(\dfrac{1}{c} \exp \Bigg\{-\Bigg(1 + \dfrac{c(n-1)}{3}\Bigg)T \Bigg\} \Bigg)^n}{n!}=: C_1(T, c).
		\end{split}
	\end{equation} 
	with $C_1(T, c)\in (0,1)$. From this point on, we want to know if there exists a strictly positive lower bound for the probability $\mathbb{P}_{(0)}(\tau_R = 1)$, where $\mathbb{P}_{(0)}$ denotes the law of the process started with a single particle at  $0$. To this end, let $S_1^b$ denote the reproduction time of the remaining particle at time $\tau_1$, and let $\{S_k^{b}\}_{k \geq1}$ be the subsequent reproduction times. Define the inter‑reproduction times by $\Delta^{b}_i := S_{i}^{b} - S_{i-1}^{b}$ for $ i \geq 1$. Furthermore, for particles $u_i$ and $u_j$ born after time $\tau_1$, let 
	$J_{u_i,u_j}^c \sim \text{Exp}(c)$ denote their competition time. We are interested in the events for the process $\boldsymbol{Z}(t)$,  
	\begin{equation*}
		\begin{split}
			A_3 &:=\left\{\text{there are exactly $m-1$ branching events  before time $T$} \right\}=\left\{ S_{m-1}^{b} \leq T <S_{m-1}^{b}+ \Delta_m^{b}\right\}
			,\\
			A_4&:=\left\{\text{there are no competition events in the time interval $[0,T]$}\right\} = \left\{\min_{i,j\in [ 1, m], i \neq j}  J^{c}_{u_i, u_j} > T\right\}\\
			A_5 &:=\left\{\text{particles does not displace more than $1/2$ between the birth times of each new particle}\right\}.
		\end{split}
	\end{equation*}
	Using the variables previously defined, $A_5$ can be represented by
	\[
	A_5= \Bigg\{\bigcap_{\stackrel{i\in [ 2, m-1]}{r\in [ 1, i]}} B^r(\Delta^{b}_{i}) \in\Big(-\frac{1}{2},\frac{1}{2}\Big)\Bigg\} 
	\bigcap \Bigg\{\bigcap\limits_{q\in [ 1, m]}  B^q(T-S_{m-1}^{b}) \in\Big(-\frac{1}{2},\frac{1}{2}\Big)\Bigg\}, 
	\]
	where all $B^{q}, B^{r}$ are standard Brownian motions, independent of each other. It is clear that $A_3 \cap A_4 \cap A_5 \subset\{\tau_R = 1\} $. Then, we have that
	\begin{equation}
		\begin{split}
			\mathbb{P}_{(0)}(\tau_R = 1) \geq  \mathbb{P}(A_3,A_4,A_5)=\mathbb{P}(A_4)\mathbb{P}\Big( A_3, A_5 \Big | A_4\Big).
		\end{split}		
	\end{equation}
	We know that
	$\mathbb{P}(A_4) = e^{-Tcm(m-1)}$ and that, conditional on $A_4$, the random variables $\{\Delta^{b}_i\}_{i\geq 1}$ are independent with $ \Delta^{b}_i \sim \text{Exp}(i)$. Let $f_i$ denote the density of $\Delta^{b}_{i}$ for $i \in [ 1, m]$, conditional on  $A_4$. Then, for the last term on the right‑hand side of the inequality, we have
	\begin{equation*}
		\begin{split}
			\mathbb{P}\Big( A_3,A_5 \Big | A_4\Big)
			= &\int_0^\infty \cdots \int_0^\infty \mathbb{1}_{\left\{\sum_{i=1}^{m-1} z_i \leq T \leq \sum_{i=1}^{m} z_i\right\}}\prod_{i=2}^{m-1}\mathbb{P}\Big( B(z_i)\in\Big(-\frac{1}{2},\frac{1}{2}\Big)\Big)^i\\
			&\mathbb{P}\Big(B\big(T-\sum_{i=1}^{m-1} z_i\big) \in\Big(-\frac{1}{2},\frac{1}{2}\Big)\Big)^m f_1(z_1)f_2(z_2)\cdots f_m(z_m) dz_1 \cdots dz_m 
		\end{split}		
	\end{equation*} 
	By standard analysis arguments we can prove that there exists a constant $C_2(T,c) \in (0,1)$ such that bounds from below the probability $\mathbb{P}_{(0)}(\tau_R = 1)$. By the Strong Markov property
	\begin{equation*}
		\mathbb{P}_{\boldsymbol{\xi}_{N^c_0}}(\tau_R = 2) = \mathbb{P}_{\boldsymbol{\xi}_{N^c_0}}(\tau_1 = 1) \mathbb{P}_{(0)}(\tau_R = 1)  \geq C_1(T, c)C_2(T, c).
	\end{equation*}
	If we denote by $C_3(T, c) = C_1(T, c)C_2(T, c)$ then $\mathbb{P}_{\boldsymbol{\xi}_{N^c_0}}(\tau_R > 2) \leq 1- C_3(T, c)$ and using the strong Markov property again we deduce that 
	\begin{equation*}
		\begin{split}
			\mathbb{P}_{\boldsymbol{\xi}_{N^c_0}}(\tau_R > 2n)\leq \left(1-C_3(T, c)\right)^n
		\end{split}
	\end{equation*}
	Let $E_n=\{ \tau_R > 2n\}$ 
	\begin{equation}
		\sum_{n=1}^\infty \mathbb{P}_{\boldsymbol{\xi}_{N^c_0}}(E_n) \leq \sum_{n=1}^\infty \left(1-C_3(T, c)\right)^n <\infty
	\end{equation}
	therefore by Borel-Cantelli lemma $\mathbb{P}_{\boldsymbol{\xi}_{N^c_0}}(\limsup_n E_n) =0$, and  therefore $\mathbb{P}_{\boldsymbol{\xi}_{\pi}}(\tau_R = \infty)=0 $. So we can conclude that $\mathbb{P}_{\boldsymbol{\xi}_{\pi}}(\tau_R < \infty) =1$. 
	
	We now turn to Step (ii). We know that $\mathbb{P}_{\boldsymbol{\xi}_{N^c_0}}(\tau_R \geq 2n) \leq (1-C_3(c))^n$ and  the set $R$ is closed  on $(\mathbb{E},\mathcal{B}(\mathbb{E}))$ with the disjoint union topology. Using the tail integral representation of the mean, and given that on $[2(n-1),2n)$,  $\mathbb{P}_{\boldsymbol{\xi}_{N^c_0}}(\tau_R \geq x ) \leq \mathbb{P}_{\boldsymbol{\xi}_{N^c_0}}(\tau_R \geq 2(n-1))$, we have 
	\begin{equation}
		\mathbb{E}_{\boldsymbol{\xi}_{N^c_0}}(\tau_R) \leq 	  \sum_{n=1}^{\infty} \int_{0}^{\infty} \mathbb{P}_{\boldsymbol{\xi}_{N^c_0}}(\tau_R \geq 2(n-1) ) \mathbb{1}_{\{x\in[2(n-1),2n) \}}dx =  \sum_{n=1}^{\infty}  \left(1-C_3(c)\right)^{n-1} ,
	\end{equation}
	which converges. As a consequence we can secure that 
	\begin{equation}
		\sup_{\boldsymbol{\xi}_{N^c_0}\in \mathbb{E}} \mathbb{E}_{\boldsymbol{\xi}_{N^c_0}}(\tau_R)  	   <\infty, \ \text{and therefore}\ \sup_{\boldsymbol{\xi}\in R} \mathbb{E}_{\boldsymbol{\xi}}(\tau_R(1))  	   <\infty.
	\end{equation} 
\end{proof}

\subsection{Existence of the limiting velocity}\label{subsec: existence}
In this subsection, we prove that the asymptotic velocity of the rightmost particle exists and is finite and non-random. 
To this end, we fix $c > 0$ and let $(N_t^c)_{t\geq 0}$ be the LB-process associated to our spatial dynamic. Following the approach of \cite{Pain2016}, our strategy is to apply the law of large numbers to the renewal structure induced by the successive return times of $N^c_t$ to one. In order to apply the law of large numbers, we first need to show that, between return times, the maximum of $(\boldsymbol{X}^c(t))_{t\geq 0}$ does not grow too much in expectation. 

\begin{lemma} \label{lem3}
	Let $(\boldsymbol{X}^c(t))_{t\geq 0}$ denote the Log-BBM($1,c$) process. Given $\varepsilon_1 \sim$ Exp$(1)$, define the times 
	\[
	T_1 := \inf \{t\geq 0: N^c_t = 1\} \quad \text{ and } \quad
	T_2 := \inf \{ t \geq T_1 + \varepsilon_1: N_t^c = 1 \},
	\]
	which are the hitting time of one and the first return time to one of the process $N_t^c$, respectively. Set $T:=T_2 - T_1$, and we define 
	\begin{equation*}
		\gamma := \max\limits_{t \in [0,T]}\Big|\max\limits_{u \in \mathcal{N}^c_t} X^c_u(t) \Big|.
	\end{equation*}
	Then $\mathbb{E}_{(0)}(\gamma)<\infty$, where $\mathbb{E}_{(0)}$ denote expectation under the law of the process starting from one particle at position zero. 
\end{lemma}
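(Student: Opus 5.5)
Starting from one particle at $0$, the process is at $T_1=0$ already, so $T=T_2$ and $T_2$ is a return time of the LB-process to one after the exponential delay $\varepsilon_1$. I will bound $\gamma$ by the maximal displacement of the dominating BBM through the coupling of Subsection \ref{subsec: coupling}, and then use Cauchy--Schwarz to separate the spatial fluctuations from the (random) time horizon $T$.

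\textbf{Step 1 (domination).} Under the coupling, every Log-BBM particle is a particle of the BBM $(X_u(t),u\in\mathcal{N}_t)$ started from one particle at $0$. Hence
\[
\gamma\le \sup_{t\le T}\max_{u\in\mathcal{N}_t}|X_u(t)| \le \sum_{k\ge 0}\mathbb{1}_{\{k\le T<k+1\}}\, Y_{k+1},
\qquad Y_k:=\sup_{t\le k}\max_{u\in\mathcal{N}_t}|X_u(t)|.
\]

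\textbf{Step 2 (Cauchy--Schwarz).} Taking expectations and applying Cauchy--Schwarz to each term gives
\[
\mathbb{E}_{(0)}(\gamma)\le\sum_k \mathbb{P}(T\ge k)^{1/2}\,\mathbb{E}(Y_{k+1}^2)^{1/2}.
\]

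\textbf{Step 3 (second moment of the BBM maximum).} The first moment is controlled by the many-to-one formula: by reflection, each path has $\mathbb{P}(\sup_{s\le k}|B_s|>x)\le 4e^{-x^2/2k}$, so
\[
\mathbb{P}(Y_k>x)\le \min\bigl(1,\,4e^{k}e^{-x^2/2k}\bigr).
\]
Strictly, this uses the expected number of paths of the genealogical tree up to time $k$, which is bounded by $\mathbb{E}|\mathcal{N}_k|=e^k$ applied to the time-$k$ particles whose ancestral lines cover all earlier ones. Integrating this tail gives $\mathbb{E}(Y_k^2)\le C k^2$ for $k\ge1$. The polynomial constant is irrelevant here; even an exponential bound $e^{ak}$ would suffice if $a$ is smaller than the decay rate obtained in Step 4.

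\textbf{Step 4 (exponential tail of $T$).} I need $\mathbb{P}(T\ge k)\le Ce^{-\delta k}$. This is the step I expect to be the main obstacle. The argument mirrors the proof of Proposition \ref{prop: harris}, but applied only to the population size $N^c$. First, $T\le \varepsilon_1+\inf\{t\ge\varepsilon_1:N^c_t=1\}$. From any state $n$, the bound $C_1(1,c)$ (no births and $n-1$ competitions in a unit of time) is not uniform in $n$, because it decays like $e^{-cn^2/3}$. I will therefore handle large and small states separately:
\begin{itemize}
\item[(a)] \emph{Coming down from large $n$.} The logistic death rate $cn(n-1)$ dominates the birth rate $n$. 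Comparison with a pure-death chain of rates $cn(n-1)-n$ shows that the chain enters $\{1,\dots,n_0\}$, with $n_0=\lceil 2/c\rceil+1$, within unit time with probability bounded below uniformly in the starting state, since $\sum_n 1/(cn^2)<\infty$ (coming down from infinity).
\item[(b)] \emph{Hitting one from small $n$.} From $\{1,\dots,n_0\}$, the chain hits $1$ within one unit of time with probability at least $\min_{n\le n_0}e^{-n}e^{-cn(n+1)/3}>0$.
\end{itemize}
Combining (a) and (b) with the strong Markov property gives geometric tails for the return time, and $\varepsilon_1$ has an exponential tail as well.

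\textbf{Conclusion.} The series in Step 2 is then bounded by $\sum_k Ce^{-\delta k/2}(k+1)<\infty$, which proves $\mathbb{E}_{(0)}(\gamma)<\infty$.
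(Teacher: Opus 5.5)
Your proof is correct, but it is organized differently from the paper's.

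\textbf{How the paper does it.} The paper works with the tail of $\gamma$ directly. It writes $\mathbb{E}_{(0)}(\gamma)=\int_0^\infty\mathbb{P}_{(0)}(\gamma\ge x)\,dx$ and splits according to whether $T>\sqrt{x}$, which replaces the random horizon by the deterministic horizon $\sqrt{x}$. On $\{T\le\sqrt{x}\}$ it uses the same ingredients as your Step 3 (BBM domination, symmetry, many-to-one, reflection) and gets a bound of order $e^{\sqrt{x}-x^{3/2}/2}$. The other piece integrates to $\mathbb{E}_{(0)}(T^2)$.

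\textbf{How you differ.} You slice on $\{k\le T<k+1\}$ and decouple $T$ from the displacement by Cauchy--Schwarz. This costs you a second-moment estimate $\mathbb{E}(Y_k^2)=O(k^2)$. It also requires more of $T$: you need $\sum_k k\,\mathbb{P}(T\ge k)^{1/2}<\infty$ rather than only $\mathbb{E}(T^2)<\infty$. So the paper's threshold trick is the more economical device.

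\textbf{Where your version is stronger.} Your Step 4 is more careful than the paper at the point where the real work lies. The paper simply asserts $\mathbb{P}_{(0)}(T\ge 2n)\le(1-K)^n$ ``by the Markov property''. Iterating the Markov property, however, needs a lower bound on the probability of reaching one within a fixed time that is uniform in the current population size. The route used in Proposition~\ref{prop: harris} (no births and $n-1$ competitions) degenerates as $n\to\infty$. Your separation into coming down from large states and hitting one from $\{1,\dots,n_0\}$ supplies exactly this uniformity, and it would justify the paper's argument as well.

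\textbf{Details to tighten.}
\begin{enumerate}
\item As written, $T\le\varepsilon_1+\inf\{t\ge\varepsilon_1:N^c_t=1\}$ is vacuous. What you actually use is $T=\varepsilon_1+H\circ\theta_{\varepsilon_1}$, with $H$ the hitting time of one, together with the Markov property at the independent time $\varepsilon_1$.
\item Your chain is not stochastically dominated by a pure-death chain with rates $cn(n-1)-n$, since it can move up. Step (a) is better justified by a Lyapunov bound. Take $V(n)=\sum_{m=n_0+1}^{n}2/(cm^2)$. It is bounded and satisfies $n\bigl(V(n+1)-V(n)\bigr)+cn(n-1)\bigl(V(n-1)-V(n)\bigr)\le-1$ for $n>n_0$ once $n_0\ge\max(4,4/c)$. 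Dynkin's formula then gives $\sup_n\mathbb{E}_n\bigl(\tau_{\{1,\dots,n_0\}}\bigr)<\infty$. Markov's inequality yields a uniform positive probability of entering $\{1,\dots,n_0\}$ within some fixed time $t_0$ (not necessarily $1$), which is all you need.
\item The explicit constant $e^{-n}e^{-cn(n+1)/3}$ in (b), inherited from the paper's computation, is not a valid lower bound. From $n=2$, the probability of reaching one before time $1$ is $O(c)$ as $c\to0$. Only its positivity for each of the finitely many $n\le n_0$ is used, however, and that is clear.
\end{enumerate}
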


\noindent
The proof closely follows that of  \cite{Pain2016}, however, we include it in Appendix \ref{appendix: lem3} for completeness. Now we are ready to prove Theorem \ref{thm: speed} \ref{it: vmaxequalvmin}, that is the existence of the asymptotic speed of the cloud of particles of the Log-BBM.

\begin{proof}[Proof of Theorem \ref{thm: speed} \ref{it: vmaxequalvmin}]
	Consider the sequence $\{T_i\}_{i\in \mathbb{N}}$ such that 
	$T_1 := \inf \{t\geq 0: N^c_t = 1\}$ and  $T_{i+1} := \inf \{t\geq T_i + \varepsilon_i: N^c_t = 1\}$, with  i.i.d random variables $\varepsilon_i \sim \text{Exp}(1)$, $i\in \mathbb{N}$.  By the strong Markov property $\{T_{i+1}-T_{i}\}_{i\in \mathbb{N}}$ is a sequence of i.i.d. random variables with the same law as some random variable $T$. Define now the sequence $\{M_i\}_{i\in \mathbb{N}}$ such that 
	\[M_i := \max\limits_{u \in \mathcal{N}^c_{T_{i+1}}} X^c_u\left(T_{i+1}\right) -\max\limits_{u \in \mathcal{N}^c_{T_{i}}} X^c_u\left(T_{i}\right), \quad i \in \mathbb{N}.\]
	Note that at each time $T_i$ there is just one particle $u^*$ in the system, so by the strong Markov property  $\{M_i\}_{i\in \mathbb{N}}$ is a sequence of i.i.d. random variables such that, under $\mathbb{P}_{(0)}$
	\[M_i \stackrel{\text{d}}{=}  \max\limits_{u \in \mathcal{N}^c_{T_{2}}} X^c_u\left(T_{2}\right) -\max\limits_{u \in \mathcal{N}^c_{T_1}} X^c_u\left(T_1\right)= X^c_{u^*}\left(T_2\right)-X^c_{u^*}\left(T_1\right)
	\stackrel{\text{d}}{=} X^c_{u^*}\left(T\right)=\max\limits_{u \in \mathcal{N}^c_T} X^c_u\left(T\right) \quad i\in \mathbb{N} \ 
	\] 
	Recall that the underlying LB-process is positive recurrent, so $\mathbb{E}_{\xi_{N_0^c}}\Big(T_{i+1} - T_{i}\Big )=\mathbb{E}_{(0)}\Big(T\Big )$ is finite. On the other hand, by lemma \ref{lem3} we have that $\mathbb{E}_{\xi_{N_0^c}}\Big(M_i\Big )=\mathbb{E}_{(0)}\Big(\max\limits_{u \in \mathcal{N}^c_{T}} X^c_u(T)\Big )<\infty$, so we can use the Law of Large Numbers to obtain that  
	\begin{equation*}
		\lim_{n\rightarrow\infty} \frac{T_{n} -T_1}{n}  =    \lim_{n\rightarrow\infty}\sum_{i=1}^{n} \frac{T_{i+1}-T_{i}}{n} =  \mathbb{E}_{(0)}(T) \quad  \mathbb{P}_{\xi_{N_0^c}}\text{-a.s. } \text{ and in } L^1,
	\end{equation*} 
	and
	\[
	\lim_{n\rightarrow\infty}  \frac{ \max\limits_{u \in \mathcal{N}^c_{T_n}} X^c_u\big(T_n\big) - \max\limits_{u \in \mathcal{N}^c_{T_1}} X^c_u\big(T_1\big)}{n}=
	\lim_{n\rightarrow\infty}\sum_{i=1}^n \frac{M_i}{n} = \mathbb{E}_{(0)}\Big(\max\limits_{u \in \mathcal{N}^c_T} X^c_u(T)\Big ) \quad  \mathbb{P}_{\xi_{N_0^c}}\text{-a.s. } \text{ and in } L^1.\]
	Given that $T_1 $ and $\max\limits_{u \in \mathcal{N}^c_{T_1}} X^c_u(T_1) $ are finite $\mathbb{P}_{\xi_{N_0^c}}$-a.s. , we have that both limits holds $\mathbb{P}_{\xi_{N_0^c}}$-a.s. and in $L^1$, by Lemma \ref{lem3}. Thus, we have 
	\begin{equation}\label{eq: LGNest}
		\lim_{n\rightarrow\infty}  \frac{\max\limits_{u \in \mathcal{N}^c_{T_n}} X^c_u\left(T_n\right)}{T_n} =\frac{\mathbb{E}_{(0)}\Big(\max\limits_{u \in \mathcal{N}^c_T} X^c_u(T)\Big )}{\mathbb{E}_{(0)}(T)} =: v_c \quad \mathbb{P}_{\xi_{N_0^c}} \text{-a.s  and in}\ L^1
	\end{equation}
	We want to prove the limit 
	\begin{equation}\label{eq: LGNpdq}
		\lim\limits _{t\rightarrow\infty} \frac{\max\limits_{u \in \mathcal{N}^c_t} X^c_u(t)}{t} = \lim\limits _{t\rightarrow\infty} \frac{\max\limits_{u \in \mathcal{N}^c_t} X^c_u(t) - \max\limits_{u \in \mathcal{N}^c_{T_{k(t)}}} X^c_u\left(T_{k(t)}\right)}{t} + \frac{ \max\limits_{u \in \mathcal{N}^c_{T_{k(t)}}} X^c_u\left(T_{k(t)}\right)}{T_{k(t)}}\frac{T_{k(t)}}{t},   
	\end{equation}
	for $k(t) = \sum\limits_{k=0}^{\infty} k \mathbb{1}_{T_{k} \leq t < T_{k+1}}$, exists and is equal to $v_c$.  
	For the second term in the r.h.s. of the equation above, it is clear that 
	\[
	\lim\limits_{t\rightarrow\infty}  \frac{ \max\limits_{u \in \mathcal{N}^c_{T_{k(t)}}} X^c_u\left(T_{k(t)}\right)}{T_{k(t)}} =v_c < +\infty
	\] and   
	$T_{k(t)} \leq t < T_{k(t)+1}$ implies that  $T_{k(t)} /k(t) \leq t/k(t) < T_{k(t)+1} /k(t)$ and consequently \\ $\lim_{t\rightarrow\infty} t/k(t) = \mathbb{E}_{(0)}(T)$. Therefore, 
	\[
	\lim\limits_{t\rightarrow\infty} \frac{T_{k(t)}}{t} = \lim\limits_{t\rightarrow\infty} \frac{T_{k(t)}}{k(t)}\frac{k(t)}{t} =1,
	\] thus the second term in the r.h.s. of \eqref{eq: LGNpdq} in the  goes to $v_c$  $\mathbb{P}_{\xi_{N_0^c}}$-a.s.
	For the first term in the r.h.s. of \eqref{eq: LGNpdq}, we have that 
	\begin{equation}
		\max\limits_{u \in \mathcal{N}^c_{t}} X^c_u(t) - \max\limits_{u \in \mathcal{N}^c_{T_{k(t)}}} X^c_u\big(T_{k(t)}\big)\leq  \max_{t\in \big[T_{k(t)},T_{k(t)+1}\big)}\Bigg| \max\limits_{u \in \mathcal{N}^c_t} X^c_u(t) - \max\limits_{u \in \mathcal{N}^c_{T_{k(t)}}} X^c_u\big(T_{k(t)}\big)\Bigg|
	\end{equation}
	If we set 
	\[
	\gamma_k:=\max_{t\in \big[T^{k}_1,T^{k+1}_1\big)}\Bigg| \max\limits_{u \in \mathcal{N}^c_t} X^c_u(t) - \max\limits_{u \in \mathcal{N}^c_{T_{k}}} X^c_u\big(T_{k}\big)\Bigg|,
	\]
	the sequence $(\gamma_k)_{k\geq 1}$, is a sequence of independent and equally distributed  random variables such that $\gamma_k \stackrel{\text{d}}{=} \gamma$ under $\mathbb{P}_{(0)}$, then by Lemma \ref{lem3} we know that $\gamma_k$ is finite for each $k\geq 1$, so as $t \rightarrow \infty$, $\gamma_k/t \rightarrow 0$  $\mathbb{P}_{(0)}$-a.s, and therefore the first term in the r.h.s. of \eqref{eq: LGNpdq} goes to zero $\mathbb{P}_{\xi_{N_0^c}}$-a.s. as $t\rightarrow\infty$.
	Moreover, given that at times $T_k$ both $\max\limits_{u \in \mathcal{N}^c_{T_k}} X^c_u(T_k)$ and $\min\limits_{u \in \mathcal{N}^c_{T_k}} X^c_u(T_k)$ coincide we can conclude that
	\begin{equation*}
		\lim\limits _{t\rightarrow\infty} \frac{\max\limits_{u \in \mathcal{N}^c_t} X^c_u(t)}{t} = \lim\limits _{t\rightarrow\infty} \frac{\min\limits_{u \in \mathcal{N}^c_t} X^c_u(t)}{t} =v_c \quad \mathbb{P}_{\xi_{N_0^c}}\text{-a.s. and in}\ L^1
	\end{equation*}
\end{proof}

\subsection{Velocity Selection}\label{subsec: speedselection}

To prove that equation Theorem \ref{thm: speed} \ref{it: speed0} holds true, we will need two auxiliary results. The first concerns the evolution of the empirical measure of the Log-BBM, and the second deals with the gaps between the particles. First, we will establish some notation that we will need.

For each $K \in \mathbb{N}$, let  $\big(\widetilde{\mu}_t^K\big)_{t\geq 0}$ be the empirical measure (mass normalized) of the Log-BBM($1,c_K$) process, defined by
\begin{equation}\label{eq: massempmeas}
	\widetilde{\mu}_t^K: = \frac{1}{N^K_t}\sum_{u\in\mathcal{N}_t^K}\delta_{X_u^K(t)}, \qquad t\geq 0, 
\end{equation}
where $\mathbf{X}^K=(\mathbf{X}^K(t))_{t\geq 0}$ is the (rescaled) Log-BBM($1, c_K$).
We denote by $(\overline{\mathbf{X}}^K(t))_{t\geq 0}$ the empirical mean of $\mathbf{X}^K$   and by $\widetilde{F}^K(t,x)$ the (random) cumulative distribution function of $\widetilde{\mu}_t^K$, i.e.
\begin{equation}\label{eq: empmean}
	\overline{\mathbf{X}}^K(t):= \left<\text{Id},\widetilde{\mu}_t^K\right>,\quad\quad \widetilde{F}^K(t,x) := \widetilde{\mu}_t^K((-\infty,x]) \quad t\geq 0,\ x\in\mathbb{R}. 
\end{equation}
\begin{remark}\label{rem: equalmeas}
	Both empirical measures $\mu^K$ and  $\widetilde{\mu}_t^K$,  defined in \eqref{eq: defemp0} and \eqref{eq: massempmeas} respectively, have the same weak limit. Namely, let $\mu_t$ be the limiting measure of $\mu_t^K$, defined in Theorem \ref{thm: convemp}. Then for all $\epsilon>0$ and all  $f\in C_b^2(\mathbb{R})$ we have 
	\[\mathbb{P}\left[\left|\left< f, \widetilde{\mu}_t^K\right> - \left< f, \mu_t\right>\right|>\epsilon\right] \leq  \mathbb{P}\left[\left|\left< f, \widetilde{\mu}_t^K\right>-\left< f, \mu_t^K\right>\right|>\epsilon/2\right]+\mathbb{P}\left[\left|\left< f, \mu_t^K\right> - \left< f, \mu_t\right>\right|>\epsilon/2\right]\]
	We know that $\mathbb{P}\left[\left|\left< f, \mu_t^K\right> - \left< f, \mu_t\right>\right|>\epsilon/2\right]\to 0$ as $K\to \infty$, and using  Markov and Cauchy-Schwarz inequalities we obtain 
	\[ \begin{split}  
		\mathbb{P}\left[\left|\left< f, \widetilde{\mu}_t^K\right>-\left< f, \mu_t^K\right>\right|>\epsilon/2\right] \leq \frac{2\|f\|_{\infty}}{m_K\epsilon}\sqrt{\text{Var} \Big( N_0^K \Big)}= \frac{2\|f\|_{\infty}}{m_K\epsilon}\sqrt{m_K-m_K^2 e^{-K/c}}
	\end{split}\]
	which vanishes as $K\to \infty$. Hence, we can conclude that  the limit in probability $\widetilde{F}$ of $\widetilde{F}^K(t,x)$ exists and satisfies $\widetilde{F} = F$ a.s., where $F$ is defined in Corollary \ref{Col: cdf}.
\end{remark}
The quantity $\mathbf{\overline{X}}^K(t)$ can be expressed in terms of  $\widetilde{F}^K(t,x)$ by employing the integral representation of the positive and negative parts of each $X^K_u$, as shown below. 
\[\begin{split}
	\overline{\mathbf{X}}^K(t) &= \frac{1}{N^K_t} \sum_{u\in \mathcal{N}^K_t}\bigg( \int_0^\infty \mathbb{1}_{\{X^K_u(t)>x\}}dx - \int_{-\infty}^0 \mathbb{1}_{\{X^K_u(t)\leq x\}}dx\bigg)\\
	& = \int_0^\infty \big(1-\widetilde{F}^K(t,x)\big)dx - \int_{-\infty}^0 \widetilde{F}^K(t,x)dx.
\end{split}\]
Assuming any initial distribution $\psi_{\pi}$, we take expectation on both sides and then differentiate with respect to $t$ to obtain 
\begin{equation}\label{eq: dervempmean}
	\begin{split}
		\partial_t	\mathbb{E}_{\psi_{\pi}}\Big(\overline{\mathbf{X}}^K(t)\Big) &= -\int_{-\infty}^\infty \partial_t\mathbb{E}_{\psi_{\pi}}\Big(\widetilde{F}^K(t,x)\Big)dx \\
		&= \int_{-\infty}^\infty \mathbb{E}_{\psi_{\pi}}\bigg[\frac{c_K(N^K_t)^2}{N^K_t-1}\Big(\widetilde{F}^K(t,x) 
		- \widetilde{F}^K(t,x)^2\Big)\bigg] dx
	\end{split},
\end{equation}
thanks to the following Lemma, whose proof is added in Appendix \ref{appendix: empmsreq}, and the fact that the integral of $\partial_{xx} \mathbb{E}_{\psi_{\pi}}\big(\widetilde{F}^K(t,x)\big)$ vanish. 
\begin{lemma}\label{lem: empmsreq}
	For all $t\geq 0$, $x \in \mathbb{R}$, we have that    \begin{equation}\label{fini_eq}
		\partial_t\mathbb{E}_{\psi_{\pi}}\Big(\widetilde{F}^K(t,x)\Big) = \frac{1}{2}\partial_{xx} \mathbb{E}_{\psi_{\pi}}\Big(\widetilde{F}^K(t,x)\Big)	-\mathbb{E}_{\psi_{\pi}}\Bigg[\frac{c_K(N^K_t)^2}{N^K_t-1}\Big(\widetilde{F}^K(t,x) 
		- \widetilde{F}^K(t,x)^2\Big)\Bigg]
	\end{equation}
\end{lemma}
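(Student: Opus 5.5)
The plan is to apply Dynkin's formula to the functional $G_x(\boldsymbol{x}) := \frac{1}{n}\sum_{i=1}^n \mathbb{1}_{\{x_i\le x\}}$ on $\mathbb{S}$, which gives $\widetilde{F}^K(t,x)=G_x(\mathbf{X}^K(t))$. This functional is not $C^2$, so we first replace the indicator by a smooth test function. Concretely, we consider $\Phi_\phi(\boldsymbol{x}) := \frac1n\sum_i \phi(x_i)$ for $\phi\in C^2_b(\mathbb{R})$. We then compute $\mathcal{G}\Phi_\phi$ using the generator \eqref{InfGenLogBBM} with $c$ replaced by $c_K$, and take expectations. Finally, we integrate in a test function against $x$, or equivalently let $\phi\uparrow\mathbb{1}_{(-\infty,x]}$, to recover \eqref{fini_eq} in the distributional sense in $x$.

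\textbf{Step 1 (generator computation).} We treat the three terms of the generator separately.

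The diffusion part gives $\frac{1}{2n}\sum_i\phi''(x_i)$.

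The branching part gives $\sum_i[\frac{1}{n+1}(\sum_j\phi(x_j)+\phi(x_i)) - \frac1n\sum_j\phi(x_j)]$. Summing over $i$, this equals $\frac{n}{n+1}\langle\phi\rangle_n+\frac{n}{n+1}\langle\phi\rangle_n - n\langle\phi\rangle_n\cdot\frac{1}{n}\cdot n$ (suitably normalised), which is identically $0$. The mean is unchanged because the duplicated particle is picked uniformly.

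The competition part is $c_K\sum_{i\neq j}[\frac{1}{n-1}(\sum_l\phi(x_l)-\phi(x_{k_{ij}}))-\frac1n\sum_l\phi(x_l)]$. With ordered positions, the particle of rank $r$ is the loser in $2(n-r)$ ordered pairs. Summing over pairs gives
\[
c_K\Big[n\langle\phi\rangle - \tfrac{1}{n-1}\textstyle\sum_r 2(n-r)\phi(x_{(r)})\Big],
\]
where $\langle\phi\rangle=\frac1n\sum\phi(x_i)$.

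Now take $\phi=\mathbb{1}_{(-\infty,x]}$ formally, and let $k=nF$ be the number of particles at or below $x$. Then $\sum_{r\le k}2(n-r)=k(2n-k-1)$. The competition term becomes
\[
c_K\Big[k-\frac{k(2n-k-1)}{n-1}\Big]=-c_K\frac{k(n-k)}{n-1}=-\frac{c_K n^2}{n-1}(F-F^2).
\]
This is exactly the nonlinear term in \eqref{fini_eq}.

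\textbf{Step 2 (martingale and expectation).} The process $\Phi_\phi(\mathbf X^K(t))-\int_0^t\mathcal{G}\Phi_\phi(\mathbf X^K(s))ds$ is a local martingale. It is a true martingale because $\Phi_\phi$ is bounded by $\|\phi\|_\infty$, and $\mathcal{G}\Phi_\phi$ is bounded by $C(1+c_K N^K_s)$, which is integrable by Lemma \ref{lem: vagueapprox} a). Taking expectations then gives
\[
\frac{d}{dt}\mathbb{E}\langle\phi,\widetilde\mu^K_t\rangle=\tfrac12\mathbb{E}\langle\phi'',\widetilde\mu^K_t\rangle+\mathbb{E}[\text{competition term}].
\]
The case $n=1$ causes no problem: the competition sum is empty there, and the factor $k(n-k)/(n-1)$ vanishes when $n=1$, so we interpret it as $0$.

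\textbf{Step 3 (passing to the indicator).} Choose $\phi=\int\mathbb{1}_{(-\infty,y]}(\cdot)\,g(y)dy$ with $g\in C_c^\infty$. This gives $\langle\phi,\widetilde\mu\rangle=\int g(y)\widetilde F(t,y)dy$ and $\langle\phi'',\widetilde\mu\rangle=\int g''(y)\widetilde F(t,y)dy$. Since the competition term is linear in $\phi$, its expectation equals $-\int g(y)\mathbb{E}[\frac{c_K N^2}{N-1}(\widetilde F-\widetilde F^2)(t,y)]dy$. This yields \eqref{fini_eq} in the weak sense in $x$.

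\textbf{Main obstacle.} The hard part is upgrading the weak identity to the pointwise identity for all $t$ and $x$. The plan is to note that $\mathbb{E}\widetilde F^K(t,\cdot)$ is smooth for $t>0$, since each particle's position has a Gaussian component. We would also control the tie-handling in the rank counting (ties occur with probability zero), and the integrability of $\frac{N^2}{N-1}$, via Lemma \ref{lem: vagueapprox} a). Together these let the derivatives be exchanged with the expectation.
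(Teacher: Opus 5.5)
Your proposal is correct, and its core matches the paper's proof. The diffusive part yields $\frac12\partial_{xx}$. The branching part vanishes because the duplicated particle is chosen uniformly. The competition part reduces, through the count $\sum_{r\le k}2(n-r)=k(2n-k-1)$, to $-c_K\,k(n-k)/(n-1)=-\frac{c_K n^2}{n-1}(F-F^2)$. That count is exactly the identity for $\sum_{u\neq v}\mathbb{1}_{\{\min(X_u,X_v)<x\}}$ used at the end of the paper's argument.

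What differs is the framework:
\begin{itemize}
\item The paper expands $\frac1h\mathbb{E}_{\psi_\pi}\big(\widetilde F^K(t+h,x)-\widetilde F^K(t,x)\big)$ directly with indicators. It splits into no event, one branching, or one competition in $[t,t+h]$, drops $O(h^2)$ terms, and simply asserts that the no-event part gives $\frac12\partial_{xx}\mathbb{E}_{\psi_\pi}\widetilde F^K$.
\item You apply Dynkin's formula to the smooth functionals $\Phi_\phi$ and justify the martingale property via $|\mathcal{G}\Phi_\phi|\le C(1+c_KN)$. You first obtain the identity in the weak form $\frac{d}{dt}\int g\,\mathbb{E}\widetilde F^K=\int\big(\frac12 g''\,\mathbb{E}\widetilde F^K-g\,R\big)$, with $R$ the nonlinear term.
\end{itemize}
Your route is more honest about the non-smoothness of the indicator. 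It is also the only meaningful form at $t=0$ when the initial configuration has atoms (e.g.\ $\eta^K_0$, used later), where the pointwise claim ``for all $t\ge0$'' cannot hold. The price is a separate regularity step, which the paper never addresses either.

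For that step, do not rely on ``each particle has a Gaussian component''. Selection removes particles according to their positions, so the surviving positions are not conditionally Gaussian. A cleaner route is Duhamel's formula $u(t)=P_tu(0)-\int_0^tP_{t-s}R(s)\,ds$ for $u=\mathbb{E}_{\psi_\pi}\widetilde F^K$.
\begin{itemize}
\item $R$ is bounded by $c_K\,\mathbb{E}(N^K_t)$, since $k(n-k)/(n-1)\le n/2$ for $n\ge2$.
\item For $t>0$, $R$ is Lipschitz in $x$. Indeed $|k'(n-k')-k(n-k)|\le n|k'-k|$. By the coupling of Subsection \ref{subsec: coupling}, the intensity of Log-BBM particles is dominated by the BBM intensity, whose density is at most $\mathbb{E}(N^K_0)\,e^t(2\pi t)^{-1/2}$.
\end{itemize}
This gives $C^2$ regularity in $x$ and $C^1$ in $t$ for $t>0$, hence the pointwise identity there.

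Finally, the intermediate expression you wrote for the branching term is garbled; as written it does not sum to zero. The correct bookkeeping is $\sum_i\big[\frac{1}{n+1}(S+\phi(x_i))-\frac1n S\big]=\frac{n}{n+1}S+\frac{1}{n+1}S-S=0$, with $S=\sum_j\phi(x_j)$.
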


Before stating the next lemma, we clarify the concept of stochastic domination. We say that two probability measures $\mu_1$ and $\mu_2$ in $M_F(\mathbb{R})$ satisfy $\mu_1\leq_{st} \mu_2$ if, for every $x\in \mathbb{R}$, $\mu_1([x,\infty))\leq \mu_2([x,\infty))$. In this case, we say that $\mu_2$ stochastically dominates $\mu_1$. On the other hand, for random variables $X,Y \in \mathbb{R}$ we write $X\leq_{st} Y$ if the distribution of $X$ is stochastically dominated by the distribution of $Y$.
Now, we show that the gaps between the particles in the Log‑BBM evolve monotonically.
\begin{lemma}\label{gaps}
	Let $(\mathbf{X}^{1,n}(t), t\geq 0)$  and   $(\mathbf{X}^{2,n}(t), t\geq 0)$ be two copies of the Log-BBM$(1,c)$, starting from $n \geq 2$  particles with initial configurations $\mathbf{X}^{1,n}(0)$ and $\mathbf{X}^{2,n}(0)$, respectively. If the processes satisfies that
	\[X^{1,n}_{(i+1)}(0)-X^{1,n}_{(i)}(0)\leq_{st} X^{2,n}_{(i+1)}(0)-X^{2,n}_{(i)}(0) \quad \forall \,i\in[1,n-1],\]
	therefore, for all $t\geq 0$, 
	\[X^{1,n}_{(i+1)}(t)-X^{1,n}_{(i)}(t)\leq_{st} X^{2,n}_{(i+1)}(t)-X^{2,n}_{(i)}(t) \quad \forall \, i\in[1,N^n_t-1].\]
\end{lemma}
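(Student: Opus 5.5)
We will prove Lemma~\ref{gaps} by a coupling construction, in the spirit of \cite{Groisman2020,Groisman2021}. We build both processes on the same probability space so that the gap ordering holds pathwise for all $t\geq0$. Since both processes start with the same number $n$ of particles, and the total-rate dynamics of the LB-process do not depend on positions, we first couple the population sizes so that $N^{1,n}_t=N^{2,n}_t=:N_t$ for all $t$. Concretely, we use one Poisson clock for births at total rate $N_t$ and one for competitions at total rate $cN_t(N_t-1)$. We then work with the ranked configurations and attach the randomness to \emph{ranks} rather than to particle labels. At a birth event, both systems duplicate the particle of the same rank $I$, chosen uniformly in $[1,N_t]$. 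At a competition event, both systems choose the same pair of ranks $\{I,J\}$ and remove the lower-ranked particle. Finally, the particle of rank $i$ in both systems is driven by the same Brownian increment $dW_i$.

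By exchangeability this rank-based description has the same law as the Log-BBM$(1,c)$. Particles are i.i.d.\ and uniformly chosen, and Brownian motions reflected at collisions, relabelled by rank, are again independent Brownian motions, since swapping labels at ties preserves the law. This gives a Skorokhod-type representation of the ordered process. By Skorokhod coupling of the initial laws we may also assume the initial gap ordering holds almost surely, i.e.\ $g^1_i(0)\leq g^2_i(0)$ with $g^k_i:=X^{k,n}_{(i+1)}-X^{k,n}_{(i)}$. Here we must be careful: the hypothesis only gives marginal stochastic order for each $i$, so we will either interpret it as a joint coordinatewise order (which is what is used later) or realise a joint coupling; this is a point to flag.

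We then check that each transition preserves $g^1_i\leq g^2_i$ for all $i$.

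\emph{Brownian motion.} With common increments per rank, the gaps evolve as $dg_i=dW_{i+1}-dW_i+dL_i$, where $L_i$ is a local-time term at $0$ keeping $g_i\geq0$. The difference $g^2_i-g^1_i$ therefore has zero martingale part. A Skorokhod reflection comparison (monotonicity of the Skorokhod map in its input) gives that the larger input keeps the larger reflected output.

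\emph{Birth at rank $I$.} A zero gap is inserted at position $I$ in both vectors and the other gaps shift index identically, so the order is preserved.

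\emph{Competition removing rank $k$.} If $k=1$, the first gap is deleted. Otherwise gaps $g_{k-1},g_k$ merge into $g_{k-1}+g_k$. Sums preserve the order, so it is kept.

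Combining these steps between the finitely many jump times on $[0,t]$ yields almost-sure ordering, hence the stochastic ordering claimed.

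The main obstacle is the diffusive part. One must justify that driving ranks, rather than labels, by independent Brownian motions reproduces the Log-BBM law, and that the multidimensional reflected gap process, whose reflection terms at neighbouring gaps are coupled, is monotone in its initial condition. Here we would appeal to the order-preserving property of the Skorokhod problem in the orthant with the rank-based reflection matrix, which is an $M$-matrix in the Harrison--Reiman sense, with comparison results as in \cite{Groisman2020}. The statistical point about marginal versus joint ordering of initial gaps is the secondary obstacle.
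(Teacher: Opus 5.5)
Your proposal is correct and follows essentially the same route as the paper. The paper also couples the two systems through shared birth and competition clocks, rank-indexed uniform marks and common Brownian drivers. Between events it cites Sarantsev's Corollary~3.11 for competing Brownian particles, which is the Harrison--Reiman comparison you invoke; it then performs the same insertion and deletion/merging check at jump times. Your treatment of the case $k=1$, where the first gap is deleted rather than ``unchanged'', is more accurate than the paper's.

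One precision for the diffusive step: the Skorokhod map is not monotone under arbitrary pointwise ordering of inputs. What makes the comparison work is that the two inputs differ by a constant nonnegative vector.

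Your flag about marginal versus joint ordering of the initial gaps is well founded, since the paper's proof also tacitly uses a pathwise joint ordering. This is harmless in the application, where one system starts with all $n$ particles at the origin, so all its gaps are zero.
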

\begin{proof}
	As the processes has the same law and initial number of particles, we can construct a coupling such as they share the sequence of branching times $\left(\tau^{b,n}_k\right)_{k\geq 1}$ and competition times $\left(\tau^{d,n}_k\right)_{k\geq 1}$, and also are driven by the same standard Brownian motions between such events. To each $\tau^{b,n}_k$ we associate a uniform random variable $U^n_k\sim \text{Unif}([\ N^n_{\tau^{b,n}_k}])$, and to each $\tau^{d,n}_k$  we associate a uniform random vector $V_k^n = (V_k^{n}(1), V_k^{n}(2))\sim \text{Unif}([ 1,N^n_{\tau^{d,n}_k}]^2)$. It is clear that both processes are driven by the same marks $(\tau_k^{b,n},U_k^n)_{k\geq 1}$ and $(\tau_k^{d,n},V_k^n)_{k\geq 1}$. Now we analyze separately the behavior of the gaps during intervals in which particles move independently as Brownian motions, and during times when splitting or competition events occur.  On the interval $[0,\tau^{b,n}_1\wedge\tau^{d,n}_1)$, the particle dynamics resemble those of a system of competing Brownian particles, so Corollary 3.11 in \cite{Sarantsev2019} ensures that
	\[X^{1,n}_{(i+1)}(\tau^{b,n}_1\wedge\tau^{d,n}_1 -)-X^{1,n}_{(i)}(\tau^{b,n}_1\wedge\tau^{d,n}_1-)\leq X^{2,n}_{(i+1)}(\tau^{b,n}_1\wedge\tau^{d,n}_1 -)-X^{2,n}_{(i)}(\tau^{b,n}_1\wedge\tau^{d,n}_1-) \]
	for all ranks $i\in[1,n-1]$. At the time of the first event we have two possibilities. If $\tau^{b,n}_1\wedge\tau^{d,n}_1 = \tau_1^{b,n}$, then we update both position vectors and obtain $\Pi_{U_1^n}^+(X^{.,n})$.  At this time, the two particles located at position $X^{.,n}_{(U_1^n)}(\tau_1^{b,n})$ have gap zero (in both systems). Moreover, the newly created gaps between this new particle and its neighbors are inherited from the gaps associated with the particle ranked $U_1^n$, while the gaps between all other ranked particles remain unchanged. Hence, the domination property is preserved. 
	
	On the other hand, if $\sigma^n_1\wedge\tau^n_1 = \tau_1^n$ we update both position vectors and obtain $\Pi_{V_1^n(1)V_1^n(2)}^-(X^{.,n})$. Without loss of generality, assume that the particle suppressed from the system is the one with rank $V_1^n(1)$. If $V_1^n(1)=1$, i.e. the leftmost particle is removed, the gaps remain unchanged. If $V_1^n(1)>1$, then the new gap is formed as the sum of the gaps between the particle ranked  $V_1^n(1)$ and its two neighbors. This ensures that the dominance property is preserved. We can proceed inductively, so that the dominance can be established for any time in the future.
\end{proof}

\begin{proof}[Proof of item ii) of \ref{thm: speed} .]
	\emph{Lower bound.} 
	In Subsection~\ref{subsec: seen-min}, we proved that the system observed from its minimum, $(\mathbf{Z}(t))_{t\geq 0}$ has a unique stationary distribution, denoted by $\boldsymbol{\nu}^c$. We define $c_K :=c/K$ for each $K\geq 1$ and set $\boldsymbol{\nu}^{c_K} = \boldsymbol{\nu}^K$. Denote by $\eta^K_{\nu}$ the initial distribution of the process $\mathbf{X}^K$ in which the particle at the leftmost position is at the origin and the others are arranged according to $\boldsymbol{\nu}^K$. 
	We start by rewriting $\widetilde{F}^K(t,x)$ as 
	\[
	\widetilde{F}^K(t,x) = \frac{1}{N_t^K}\sum_{j=1}^{N^K_t}  \mathbb{1}_{\{X^K_{(j)}(t)\leq x \}}= \mathbb{1}_{\{X^K_{(N_t^K)}(t)\leq x\}} + \sum_{j=1}^{N^K_t-1} \frac{j}{N_t^K} \mathbb{1}_{\{X^K_{(j)}(t)\leq x \leq X^K_{(j+1)}(t)\}},
	\]
	so we can substitute this representation of $\widetilde{F}^K(t,x)$ in \eqref{eq: dervempmean} and obtain the following
	expression in terms of the spacings between each consecutive order statistics of the process
	\begin{equation}
		\frac{\partial}{\partial t}	\mathbb{E}_{\eta^K_{\nu}}\left(\overline{\mathbf{X}}^K(t)\right) =  \mathbb{E}_{\eta^K_{\nu}}\left[\sum_{j=1}^{N^K_t-1} \frac{c_Kj(N_t^K-j)}{N_t^K-1} \left(X^K_{(j+1)}(t) - X^K_{(j)}(t) \right) \right] 
	\end{equation}

	For every $n\in\mathbb{N}$,  denote by $\eta^{n,K}_{\nu}$ the initial distribution $\eta^{K}_{\nu}$ of the process $\mathbf{X}^K$ conditioned to have $n$ particles at the beginning. We can then write  
	\[\mathbb{E}_{\eta^{K}_{\nu}}(\cdot)=\sum_{n\geq 1} \mathbb{E}_{\eta^{n,K}_{\nu}}(\cdot)\pi(n).\]
	Let $\eta_{0}^{K}$ denote the configuration when the system is started from the stationary population size, and let $\eta_{0}^{n,K}$ denote the initial configuration in which $n \in\mathbb{N}$ particles are located at the origin. 
	For each 
	$n$ the gaps between particles in the system started from the distribution $\eta^{n,K}_{\nu}$
	remain, at all times, larger than those in the system started from 
	$\eta_{0}^{n,K}$. This ensures that, by Lemma \ref{gaps}
	\begin{equation*}
		\mathbb{E}_{\eta^{n,K}_{\nu}}\left[\sum_{j=1}^{N^K_t-1} \frac{c_Kj(N_t^K-j)}{N_t^K-1} \left(X^K_{(j+1)}(t) - X^K_{(j)}(t) \right) \right] \geq \mathbb{E}_{\eta^{n,K}_{0}}\left[\sum_{j=1}^{N^K_t-1} \frac{c_Kj(N_t^K-j)}{N_t^K-1} \left(X^K_{(j+1)}(t) - X^K_{(j)}(t) \right) \right]
	\end{equation*}
	and therefore, we have the following domination
	\begin{equation}\label{eq: dom}
		\frac{\partial}{\partial t}	\mathbb{E}_{\eta_{\nu}^{K}}\left(\overline{\mathbf{X}}^K(t)\right) \geq 	\frac{\partial}{\partial t}	\mathbb{E}_{\eta_{0}^{K}}\left(\overline{\mathbf{X}}^K(t)\right) = \int_{-\infty}^\infty \mathbb{E}_{\eta_{0}^{K}}\left[\frac{c_K(N^K_t)^2}{N^K_t-1}\left(\tilde{F}^K(t,x) 
		- \tilde{F}^K(t,x)^2\right)\right]dx.
	\end{equation}		
	
	If we start the system from the initial distribution $\eta_{0}^{K}$, we know that for each $K\geq 1$, $\tilde{F}^K(0,x) = \mathbb{1}_{\{x\leq 0\}}$. Since this initial (random) cumulative function does not depend on $K$, taking the limit $K\to\infty$ yields $\widetilde{F}^K(0,x)\to \widetilde{F}(0,x)= \mathbb{1}_{\{x\leq 0\}}$ in probability.
	Moreover, by Corollary \ref{Col: cdf} and Remark \ref{rem: equalmeas}  $\widetilde{F}^K(t,x)$ converges in probability to $F(t,x)$ with initial condition $F(0,x)= \mathbb{1}_{\{x\leq 0\}}$, which, in this particular case, is the classical solution to the FKPP equation. Therefore, it is known that there exists a nontrivial limiting shape $w_{\sqrt{2}}$ and a centering term $m(t)$ such that 
	\begin{equation}\label{conv}
		F(t,m(t)+z)\to w_{\sqrt{2}}(z)\quad t\to\infty.
	\end{equation}
	uniformly in $z$. If we substitute $w_{\sqrt{2}}(z)$ into equation \eqref{eq: fkpp} for the classical case, with $H(u)=u(u-1)$, we obtain the following ODE:
	\begin{equation}
		\begin{cases}
			w_{\sqrt{2}}'' + \sqrt{2}\, w_{\sqrt{2}}' + w_{\sqrt{2}}(w_{\sqrt{2}} - 1) = 0, \\
			w_{\sqrt{2}}(-\infty) = 0, \\
			w_{\sqrt{2}}(+\infty) = 1.
		\end{cases}
	\end{equation}
	Integrating with respect to 
	$z$, we obtain the identity 
	\begin{equation}\label{eq: sqrt2}
		\int_{\mathbb{R}}w_{\sqrt{2}}(z)(1-w_{\sqrt{2}}(z))dz=\sqrt{2}.
	\end{equation}
	
	On the other hand, we can take the expectation of the empirical mean $\overline{\mathbf{X}}^K$ w.r.t. $\eta^K_{\nu}$ and add and subtract  the expectation of the minimum w.r.t. $\eta^K_{\nu}$, to obtain
	\begin{equation}\label{eq: meanempmean}
		\mathbb{E}_{\eta^K_{\nu}}	\left(\overline{\mathbf{X}}^K(t)\right) = 
		\mathbb{E}_{\eta^K_{\nu}} \left(\frac{1}{N^K_t} \sum_{j= 1}^{N^K_t} \left(X^K_{(i)}(t)  - X^K_{(1)}(t) \right)\right) + \mathbb{E}_{\eta^K_{\nu}}\left(X^K_{(1)}(t)\right).
	\end{equation}
	The first term in \eqref{eq: meanempmean} is the empirical mean of the process $(\mathbf{Z}(t))_{t\geq 0}$ starting from the stationary distribution $\boldsymbol{\nu}^K$, so its expectation will not depend on time. On the other hand, the process $(X_{(1)}(t))_{t\geq 0}$ has stationary increments, and therefore its mean with respect to $\eta^K_{\nu}$ is linear with slope equal to $v_K$, by item i) of Theorem \ref{thm: speed}. So, if we take the derivative with respect to $t$ on both sides of \eqref{eq: meanempmean}, we obtain that 
	\begin{equation}\label{eq:slope}
		\frac{\partial}{\partial_t}\mathbb{E}_{\eta^K_{\nu}}	\left(\overline{\mathbf{X}}^K(t)\right) = v_K
	\end{equation}  
	Thanks to \eqref{eq: dom}, taking $0< b<\infty$ we have that 
	\begin{equation}\label{eq: lim1}
		v_K = \frac{\partial}{\partial t}	\mathbb{E}_{\eta_{\nu}^K}\Big(\overline{\boldsymbol{X}}^K(t)\Big) \geq \int_{-b+m(t)}^{b+m(t)} \mathbb{E}_{\eta_0^K}\Bigg[\frac{c_K(N^K_t)^2}{N^K_t-1}\Big(\widetilde{F}^K(t,x) 
		- \widetilde{F}^K(t,x)^2\Big)\Bigg]dx.
	\end{equation}
	Since $\widetilde{F}^K$ is uniformly bounded by one, the bounded convergence theorem implies that $\widetilde{F}^K$ also converges to $F$ in $L^1$. Consequently, $\widetilde{F}^K(t,x) - \widetilde{F}^K(t,x)^2$ converges to $F(t,x) 
	- F(t,x)^2$ in $L^1$. Moreover, we know that $\frac{c_K{(N^K_t)}^2}{N^K_t-1}\to 1$ in $L^1$ and therefore 
	\[\lim_{K\to\infty}\mathbb{E}_{\eta_0^K}\left[\frac{c_K(N^K_t)^2}{N^K_t-1}\left(\tilde{F}^K(t,x) 
	- \tilde{F}^K(t,x)^2\right)\right]= F(t,x)(1-F(t,x)),\]
	so taking limits as $K\to \infty$ on both side of \eqref{eq: lim1}, and performing the change of variables $x=z+m(t)$, we obtain
	\[
	v_0 \geq \lim_{t\to\infty}\int_{-b}^{b} (F(t,z+m(t))(F(t,z+m(t))-1))dz = \int_{-b}^{b} w_{\sqrt{2}}(z)(1-w_{\sqrt{2}}(z))dz,
	\]
	where the last equality is given by \eqref{conv}. We can conclude the proof of the lower bound by taking the supremum over $b$ thanks to \eqref{eq: sqrt2}.
	
	\noindent
	\emph{Upper Bound}. Consider the coupling construction between the Log-BBM($1,c$) (with initial configuration given by $\xi_{N_0^c}$) and a system of $N_0^c$ dyadic branching Brownian motions defined in Subsection \ref{subsec: coupling}. If we denote by $M_t^i$ the position of the rightmost particle at time $t$ in the $i$-th branching Brownian motion, we have
	\[\max_{u\in \mathcal{N}^c_t}X^c_u(t)\leq \max_{u\in \mathcal{M}_t}X_u(t)=\max\{M^1_t,\cdots,M^{N_0^c}_t\}\quad \text{a.s. for}\quad t\geq 0.\]
	Moreover, by \cite{Champneys1995} it is known that, for any $i\in[1,N_0^c]$,  
	\[\lim_{t\to\infty}t^{-1}M^i_t=\sqrt{2}
	\qquad \text{a.s.}\]
	Therefore, we obtain the desired upper bound.
\end{proof}

\section*{Acknowledgements}
This project was supported by the UNAM-PAPIIT grant IN109924 {\it ``Comportamiento de poblaciones con interacción y competencia''}.

\appendix 
\section{Proof of Lemma~\ref{lem: martngl} }\label{sec: AppdxA}
\begin{lemma}\label{lem: order}
	Let $(|\mathcal{M}_t|)_{t\geq 0}$ denote the population size at time $t$ of $N^K$ independent branching Brownian motions. Assume that, for $K \in \mathbb{N}$, $N^K$ is a random variable with the distribution of a Poisson random variable with parameter $K/c$, conditioned to be positive. Then, for every fixed $t> 0$ and $m\in\mathbb{N}$
	\[\mathbb{E}\big(|\mathcal{M}_t|^m\big) = O_t(K^m)\text{ as }K\to \infty.\]
\end{lemma}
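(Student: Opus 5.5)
Conditioning on $N^K$ reduces the claim to two elementary moment bounds. Given $N^K=n$, the population size $|\mathcal{M}_t|$ is a sum $G_1+\cdots+G_n$ of $n$ i.i.d.\ geometric random variables with success parameter $e^{-t}$, as noted in Subsection~\ref{subsec: coupling}. Each $G_i$ has finite moments of all orders, bounded by constants depending only on $t$:
\[
\mathbb{E}(G_1^j)\le j!\,e^{jt}=:C_j(t).
\]
Expanding the $m$-th power of the sum multinomially and using independence, each term is a product of moments $\mathbb{E}(G_1^{j_1})\cdots\mathbb{E}(G_1^{j_r})$ with $j_1+\cdots+j_r=m$. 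The number of ordered index choices with $r$ distinct indices is at most $n^r\le n^m$ for $n\ge1$. This gives
\[
\mathbb{E}\big(|\mathcal{M}_t|^m\,\big|\,N^K=n\big)\le C(m,t)\,n^m,
\]
with $C(m,t)$ independent of $n$ and $K$. A quicker route is Minkowski's inequality, $\|G_1+\dots+G_n\|_m\le n\|G_1\|_m$, which gives $\mathbb{E}(|\mathcal{M}_t|^m\mid N^K=n)\le n^m\,\mathbb{E}(G_1^m)$ directly. I would use the Minkowski version.

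Taking expectations, it remains to show $\mathbb{E}((N^K)^m)=O(K^m)$, where $N^K$ is Poisson with parameter $\lambda=K/c$ conditioned to be positive. For $P\sim\mathrm{Poisson}(\lambda)$ we have $\mathbb{E}((N^K)^m)=\mathbb{E}(P^m)/(1-e^{-\lambda})$. The moment $\mathbb{E}(P^m)=\sum_{k=0}^m S(m,k)\lambda^k$ is a polynomial in $\lambda$ of degree $m$ (via Stirling numbers, or by induction using the factorial moments $\mathbb{E}[P(P-1)\cdots(P-k+1)]=\lambda^k$). Since $1-e^{-\lambda}\ge 1-e^{-1/c}>0$ for $K\ge1$, this yields
\[
\mathbb{E}((N^K)^m)\le \frac{1}{1-e^{-1/c}}\sum_{k=0}^m S(m,k)(K/c)^k=O(K^m).
\]
Combining the two bounds gives $\mathbb{E}(|\mathcal{M}_t|^m)\le \mathbb{E}(G_1^m)\,\mathbb{E}((N^K)^m)=O_t(K^m)$.

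There is no real obstacle here; the one point to check is that the conditional bound is uniform in $n$ with the exact power $n^m$. Minkowski's inequality provides this cleanly, so no combinatorial bookkeeping is needed. The $t$-dependence enters only through $\mathbb{E}(G_1^m)\le m!\,e^{mt}$, which justifies the subscript in $O_t$.
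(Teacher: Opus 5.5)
Your proof is correct. It shares the paper's skeleton: condition on $N^K$, show that the conditional $m$-th moment is at most a $t$-dependent constant times a degree-$m$ expression in $n$, then use the moments of a Poisson variable conditioned to be positive. The key step is done differently, however.

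The paper computes the conditional moment exactly. It writes the moment generating function as $\mathbb{E}[f(t,\lambda)^{N^K}]$, differentiates $m$ times with Fa\`a di Bruno's formula, and evaluates at $\lambda=0$. This expresses $\mathbb{E}(|\mathcal{M}_t|^m)$ as a combination of the factorial moments $\mathbb{E}[N^K(N^K-1)\cdots(N^K-j+1)]$, $j\le m$, with Bell-polynomial coefficients depending on $t$. It then uses that these factorial moments equal $(K/c)^j/(1-e^{-K/c})$.

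You replace all of this with the one-line Minkowski bound $\mathbb{E}(|\mathcal{M}_t|^m\mid N^K=n)\le n^m\,\mathbb{E}(G_1^m)$, together with the Stirling-number (Touchard) expression for Poisson moments.

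The paper's route yields an exact formula, and with it the sharp leading constant $(\mathbb{E}G_1)^m$. The price is differentiating under the expectation and some care with the Bell-polynomial coefficients and with the event $\{N^K<m\}$. Your route only gives the constant $\mathbb{E}(G_1^m)$, which is irrelevant for an $O(K^m)$ statement. In exchange it is fully elementary, makes the $t$-dependence explicit, and handles the conditioning on positivity exactly through $\mathbb{E}((N^K)^m)=\mathbb{E}(P^m)/(1-e^{-\lambda})$ and $1-e^{-\lambda}\ge 1-e^{-1/c}$.

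The geometric moment bound you quote without proof is also correct. Since $G_1^j\le j!\binom{G_1+j-1}{j}$ and $\mathbb{E}\binom{G_1+j-1}{j}=e^{jt}$ for $G_1$ geometric on $\{1,2,\dots\}$ with parameter $e^{-t}$, you get $\mathbb{E}(G_1^j)\le j!\,e^{jt}$.
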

\begin{proof}

	For each $t\geq 0$, let $M_{|\mathcal{M}_t|}(\lambda)$ denote the moment generating function of $|\mathcal{M}_t|\sim \text{NB}(N^K,e^{-t})$, conditional on the variable $N^K$.
	We know that, for $\lambda<-\log(1-e^{-t})$, $M_{|\mathcal{M}_t|}(\lambda) = \mathbb{E}\left[f(\lambda)^{N^K}\right]$ with 
	\[f(t,\lambda)=\frac{e^{-t}}{1-(1-e^{-t})e^{\lambda}}\ .\] The function $f(t,\lambda)$ is infinitely differentiable in $\lambda$, therefore $M_{|\mathcal{M}_t|}(\lambda)$ is also infinitely differentiable in $\lambda$, and using Faa di Bruno's formula we obtain 
	\[\frac{\partial^m M_{|\mathcal{M}_t|}(\lambda)}{\partial \lambda^m} = \mathbb{E}\Bigg(\sum_{j=1}^{m\wedge N^K} \frac{N^K!}{(N^K-j)!}f(t,\lambda)^{N^K-j}B_{m,j}(\partial_{\lambda}f(t,\lambda), \cdots, \partial_{\lambda}^{(m-j+1)}f(t,\lambda))\Bigg),\]
	where $B_{m,j}$ are the Bell polynomials. Given that $f(0)=1$ and $\partial_{\lambda}^{(j)}f(t,0)$ is finite for every $j$, then for $\lambda=0$ we have
	\begin{equation}\label{eq: moment}
		\begin{split}
			\mathbb{E}\Big(|\mathcal{M}_t|^m\Big) &= \mathbb{E}\Bigg(\sum_{j=1}^{m\wedge N^K} \frac{N^K!}{(N^K-j)!}B_{m,j}(\partial_{\lambda}f(t,0), \cdots, \partial_{\lambda}^{(m-j+1)}f(t,0))\Bigg)\\
			& \leq B_{m,j}(\partial_{\lambda}f(t,0), \cdots, \partial_{\lambda}^{(m-j+1)}f(t,0))m\mathbb{E}\Bigg(\frac{N^K!}{(N^K-m\wedge N^K)!}\Bigg)
		\end{split}.
	\end{equation}
	In particular, on the event $\{m\leq N^K\}$, the expectation in \eqref{eq: moment} equals the $m$-th factorial moment of $N^K$ and is therefore given by  $(1-e^{-K/c})^{-1}(K/c)^m$, which is of order $K^m$. As the probability of the event $\{m> N^K\}$ converges to zero as $K \to \infty$, we can conclude the desired result.
\end{proof}

\begin{proof}[Proof Lemma~\ref{lem: martngl}]

	For all $\phi(t,x) \in C^{1,2}_b(\mathbb{R}_+\times\mathbb{R})$, we have
	\begin{equation}\label{eq: incrmt}
		\left< \phi, \mu_t\right> - \left< \phi, \mu_0\right> = \frac{1}{m_K} \left[\sum_{u \in \mathcal{N}^K_t} \phi(t,X^K_u(t))  -  \sum_{u \in \mathcal{N}^K_0} \phi(0,X^K_u(0))\right].
	\end{equation}
	We can divide the particles appearing in \eqref{eq: incrmt} into three groups: those that remain alive throughout the interval $[0,t]$ (i.e. $u\in\mathcal{N}^K_0$ with $\tau^d_u>t$ ); those that were initially present in the system but died before time $t$ (i.e. $u\in\mathcal{N}^K_0$ with $ 0<\tau^d_u \leq t$); and those who were born during $(0,t]$ and survive until time $t$ (i.e. $u\in \mathbb{T}^K$ with $0<\tau^b_u \leq t<\tau^d_u$). Consequently, \eqref{eq: incrmt} can be rewritten as follows:
	\begin{equation}\label{eq: incrmt1} 
		\begin{split}
			m_K\left(\left< \phi, \mu_t\right> - \left< \phi, \mu_0\right> \right) =& \sum_{u\in\mathcal{N}^K_0: \tau^d_u>t} \phi(t,X^K_u(t))  -   \phi(0,X_u(0))\\  &+ \sum_{u\in\mathbb{T}^K: 0<\tau^b_u \leq t<\tau^d_u} \phi(t,X^K_u(t)) 
			- \sum_{u\in\mathcal{N}^K_0: 0<\tau^d_u \leq t} \phi(0,X^K_u(0)).
		\end{split}
	\end{equation}
	For completeness, in the second term on the right-hand side of \eqref{eq: incrmt} we may add and subtract the quantity $ \phi(\tau^{b}_u,X^K_u(\tau^{b}_u))$ for all $u\in\mathbb{T}^K$ such that $0<\tau^b_u \leq t$. Such particles may survive up to time $t$ (and possibly beyond) or die before time $t$. With this observation, the second term on the right-hand side of \eqref{eq: incrmt} can be rewritten as
	\begin{equation}\label{eq: incrmt2 }
		\begin{split}
			\sum_{u\in\mathbb{T}^K: 0<\tau^b_u \leq t<\tau^d_u} \phi(t,X^K_u(t)) 
			& = \sum_{u\in\mathbb{T}^K:0<\tau^b_u \leq t<\tau^d_u} \phi(t,X^K_u(t)) - \phi(\tau^{b}_u,X^K_u(\tau^{b}_u))
			\\
			& - \sum_{u\in\mathbb{T}^K:0<\tau^b_u < \tau^d_u <t} \phi(\tau^{b}_u,X^K_u(\tau^{b}_u)) + \sum_{u\in\mathbb{T}^K:0<\tau^b_u \leq t} \phi(\tau^{b}_u,X^K_u(\tau^{b}_u))
		\end{split}
	\end{equation}
	Proceeding in the same way, in the third term of \eqref{eq: incrmt} we add and subtract $\phi(\tau^{d}_u,X^K_u(\tau^{d}_u-))$ for all $u\in\mathbb{T}^K$ such that $0<\tau^d_u \leq t$. We then divide these particles into two groups: the initial particles that died before time 
	$t$, and those that were born during $(0,t)$ and died before time $t$. This gives us the following representation of the third term:
	\begin{equation}\label{eq: incrmt3 }
		\begin{split}
			\sum_{u\in\mathcal{N}^K_0: 0<\tau^d_u \leq t} \phi(0,X^K_u(0)) 
			&= \sum_{u\in\mathcal{N}^K_0: 0<\tau^d_u \leq t} \phi(0,X^K_u(0))-\phi(\tau^{d}_u,X^K_u(\tau^{d}_u-))  \\
			&  + \sum_{u\in \mathbb{T}^K:0<\tau^d_u \leq t} \phi(\tau^{d}_u,X^K_u(\tau^{d}_u-))- \sum_{u\in \mathbb{T}^K:0<\tau^b_u < \tau^d_u \leq t} \phi(\tau^{d}_u,X^K_u(\tau^{d}_u-)).
		\end{split}
	\end{equation}
	Putting together \eqref{eq: incrmt1}, \eqref{eq: incrmt2 }, \eqref{eq: incrmt3 }, we obtain
	\begin{equation}\label{eq: incrmt4}
		\begin{split}
			m_K\big(\big< \phi, \mu_t\big> - \big< \phi, \mu_0\big> \big) =& \sum_{u\in\mathbb{T}^K: 0 \leq \tau_u^b\leq t} \phi(t\wedge \tau_u^d,X^K_u(t\wedge \tau_u^d-))-\phi(\tau^b_u,X^K_u(\tau_u^b))  \\  &+ \sum_{u\in\mathbb{T}^K:0<\tau^b_u \leq t} \phi(\tau^{b}_u,X^K_u(\tau^{b}_u))
			- \sum_{u\in \mathbb{T}^K:0<\tau^d_u \leq t} \phi(\tau^{d}_u,X^K_u(\tau^{d}_u-)).
		\end{split}
	\end{equation}
	\noindent
	Since every particle $u\in\mathbb{T}^K$ so that $0 \leq \tau_u^b\leq t$ evolves as a Brownian motion in the interval 
	$\tau^b_u\leq s \leq \tau_u^d\wedge t $, we may apply Ito’s formula to each term in the sum appearing in the first term on the right-hand side of \eqref{eq: incrmt4}. In doing so, we obtain the following.
	\begin{equation}\label{eq: incrmt5}
		\begin{split}
			\sum_{\substack{
					u \in \mathbb{T}^K \\
					0 \le \tau_u^b \le t}} \phi(t\wedge \tau_u^d,X^K_u(t\wedge \tau_u^d-))&-\phi(\tau^b_u,X^K_u(\tau_u^b))
			= \sum_{\substack{
					u \in \mathbb{T}^K \\
					0 \le \tau_u^b \le t
			}}\int_{\tau_u^b}^{t\wedge \tau^{d}_u} \partial_x\phi(s, X^K_u(s)) dX^K_u(s)\\
			&+  \sum_{\substack{
					u \in \mathbb{T}^K \\
					0 \le \tau_u^b \le t
			}} \int_{\tau_u^b}^{t\wedge \tau^{d}_u}\Big[\partial_t\phi(s, X^K_u(s)) +\frac{1}{2}  \partial_{xx}\phi(s, X^K_u(s))\Big] ds
		\end{split}
	\end{equation}
	Substituting \eqref{eq: incrmt5} into \eqref{eq: incrmt4}, and adding and subtracting $D_t^K$ and $B_t^K$ we obtain \eqref{eq: stochdiff}. It remains to verify that the processes defined in \eqref{eq: martings} are indeed square-integrable martingales. We start with the process $(M^K_t)_{t\geq 0}$ 
	\begin{equation}\label{hl.19}
		M^K_t = \frac{1}{m_K}\sum_{u\in\mathbb{T}^K: 0 \leq \tau_u^b\leq t}\int_{0}^{t} \mathbb{1}_{\{\tau^{b}_u\leq s< \tau^{d}_u\}}\partial_x\phi(s, X^K_u(s)) dX^K_u(s) 
	\end{equation} 
	Given that $\phi$ is bounded and on $\{\tau^{b}_u\leq s< \tau^{d}_u\}$ the particle $u$ behaves as a Brownian motion, the integrals in \eqref{hl.19} are Ito's integrals and therefore martingales. Furthermore, using Itô’s isometry and the fact that, by the coupling defined in Section \ref{subsec: coupling}, the number $\#\big\{u\in\mathbb{T}^K:0 \leq \tau_u^b\leq t\big\}$ of births on $[0,t]$ in the Log-BBM($1,c_K$)  can be bounded above by the number of particles in a standard BBM with a birth rate one and an initial mass $m_K$, we obtain, for each $t\geq 0$
	\[
	\begin{split}
		\lim_{K\rightarrow\infty}\mathbb{E}\Big[\big(M_t^K\big)^2\Big] &\leq m_K^{-2}t\big\|\big(\partial_x\phi\big)^2\big\|_{\infty}\mathbb{E}\big[\#\big\{u\in\mathbb{T}^K:0 \leq \tau_u^b\leq t\big\}\big] \\
		&\leq \lim_{K\rightarrow\infty} m_K^{-1}t\big\|\big(\partial_x\phi\big)^2\big\|_{\infty} e^t  =0.
	\end{split}
	\]
	Thus, we can conclude that $M_t^K$ is a square-integrable martingale that vanishes as $K\to \infty$. Also, by the Doob-Meyer decomposition and Ito's isometry we obtain that
	\begin{equation}\label{hl.19}
		[M^K]_t = \frac{1}{m_K^2}\sum_{u\in\mathbb{T}^K: 0 \leq \tau_u^b\leq t}\int_{0}^{t} \mathbb{1}_{\{\tau^{b}_u\leq s< \tau^{d}_u\}}\big(\partial_x\phi(s, X^K_u(s))\big)^2 ds. 
	\end{equation} 
	We now turn to the process $(D^K_t)_{t\geq 0}$, and fix $K\geq 1$. 
	Let $\{\mathcal{F}^K_t\}$ be the natural filtration associated to $\boldsymbol{X}^K$.
	For a fix $s\leq t$, define 
	\begin{equation*}\label{eq: diffD}
		f(t) := \mathbb{E}(D_t^{K} - D_s^{K}| \mathcal{F}^K_s).
	\end{equation*} 
	We can compute the derivative of  $f(t)$ directly from its definition as follows
	\begin{equation}\label{eq: derD}
		\begin{split}
			f'(t)=\lim_{h\rightarrow 0}\frac{1}{h} \mathbb{E}\big(D_{t+h}^{K} - D_t^{K}\big| \mathcal{F}^K_s\big)
			& = \lim_{h\rightarrow 0}\frac{1}{h}\mathbb{E}\Bigg(\frac{1}{m_K}\sum_{\substack{
					u \in \mathbb{T}^K \\
					t \le \tau_u^d \le t+h
			}} \phi(\tau^{d}_u,X^K_u(\tau^{d}_u-))\\ & - \int_t^{t+h}   \frac{2c_K}{m_K}  \sum_{\stackrel{u,v \in \mathcal{N}^K_w}{v\neq u}}  \phi(w,X^K_v(w))\mathbb{1}_{X^K_v(w)\leq X^K_u(w)}dw\Bigg|\mathcal{F}^K_s\Bigg) 
		\end{split}
	\end{equation}
	Consider the times $\tau^d_{1} = \inf\{s>t: \exists u \in \mathbb{T}^K \ \text{such that} \ \tau_u^d=s\}$ and $\tau^b_{1} = \inf\{s>t: \exists u \in \mathbb{T}^K \ \text{such that} \ \tau_u^b=s\}$, which respectively denote the first death time and the first birth time after $t$. Also, let $u_1\in\mathbb{T}^K$ be such that $\tau_{u_1}^d = \tau^d_{1}$, and denote $X_{u_1}^K(\tau_1^d-)$ its position just before death. For the first term in the r.h.s. of \eqref{eq: derD}, since over a time interval of length $h$ (for $h$ sufficiently small), the probability of observing two or more events is of order $h^2$, we obtain 
	\begin{equation}\label{eq: firsttime}
		\mathbb{E}\Bigg(\frac{1}{m_K}\sum_{\substack{
				u \in \mathbb{T}^K \\
				t \le \tau_u^d \le t+h
		}}  \phi(\tau^{d}_u,X^K_u(\tau^{d}_u-))\Bigg|\mathcal{F}^K_t\Bigg)= \mathbb{E}\Bigg(\frac{1}{m_K}\phi(\tau^{d}_1,X^K_{u_1}(\tau^{d}_1-))\mathbb{1}_{\tau_1^b>t+h}\Bigg|\mathcal{F}^K_t\Bigg) +O_K(h^2)
	\end{equation}
	Let $\overline{c_K} := c_K N^K_t(N^K_t-1)$ and $\kappa(N^K_t) := N_t^K + \overline{c_K}$. 
	Conditional on the event $\{\tau_1^b>t+h\}$, the time $\tau_1^d -t$ has density  $f_{\tau^{d}_1 -t}(h) =  \overline{c_K}e^{-\kappa(N^K_t)h}$. Moreover, at time $\tau_1^d$, a pair of particles $(u,v)$ is uniformly selected among all ordered pairs in $\mathcal{N}_t^K$, that is, with probability $c_K/\overline{c_K}$, and we  set $u_1 = k_{u,v}(\tau_1^d)$. Therefore, the first term in the rhs of \eqref{eq: firsttime} can be written as
	\begin{equation}\label{eq: jumpart}
		\begin{split}
			&\mathbb{E}\Bigg(\frac{1}{m_K} \sum_{\stackrel{v,u \in\mathcal{N}^K_t}{u\neq v}}\int_t^{t+h}  \phi(w,X_{k_{u,v}(w)}(w))\mathbb{P}(u,v\ \text{compete})f_{\tau_1^d -t}(w-t)dw \Bigg| \mathcal{F}^K_t \Bigg)\\
			&= \mathbb{E}\Bigg(\frac{1}{m_K} \sum_{\stackrel{v,u \in\mathcal{N}^K_t}{u\neq v}}\int_t^{t+h}  c_K\phi(w,X_{k_{u,v}(w)}(w)) dw\Bigg| \mathcal{F}^K_t\Bigg) +O_K(h^2)\\
			& = \mathbb{E}\Bigg(\frac{2c_K}{m_K} \int_t^{t+h}\Bigg( \sum_{u \in\mathcal{N}^K_t} \phi(w,X^K_u(w))\Bigg(\sum_{v \in\mathcal{N}^K_t}\mathbb{1}_{X^K_u(w)\leq X^K_v(w)} -1\Bigg)\Bigg)\Bigg| \mathcal{F}^K_t\Bigg) +O_K(h^2),
		\end{split}
	\end{equation}
	Here, the first equality holds since $\mathbb{P}(u,v\ \text{compete})f_{\tau_1^d -t}(w-t) = c_K + O_K(h)$ and $\phi$ is bounded. On the other hand, for the second term in the rhs of \eqref{eq: derD}, an infinitesimal analysis on the interval $[t,t+h]$ gives us that it equals to 
	\begin{equation*}
		\mathbb{E}\Bigg(\frac{2c_K}{m_K} \int_t^{t+h}\Bigg( \sum_{u \in\mathcal{N}^K_t} \phi(w,X^K_u(w))\Bigg(\sum_{v \in\mathcal{N}^K_t}\mathbb{1}_{X^K_u(w)\leq X^K_v(w)} -1\Bigg)\Bigg)\Bigg| \mathcal{F}^K_t\Bigg) +O_K(h^2)
	\end{equation*}
	Therefore, we can conclude from equations \eqref{eq: derD}-\eqref{eq: jumpart} that $f'(t) =0$ for all $t\geq 0$. Since $f(s)=0$, it follows that $f(t)=0$ for all $t\geq s$.    
	
	It remains to verify that the process $(D^K_t)$ is square–integrable. 
	In fact, by definition
	\begin{equation}\label{eq: quadvarD}
		\begin{split}
			\mathbb{E}\Big(\big|m_KD^{K}_t\big|^2\Big) &\leq \mathbb{E}\Bigg[\Bigg(\sum_{\substack{
					u \in \mathbb{T}^K \\
					0 \le \tau_u^d \le t
			}}  \phi(\tau^{d}_u,X^K_u(\tau^{d}_u-))\Bigg)^2\Bigg]\\
			&+4\mathbb{E}\Bigg[\Bigg|\sum_{\substack{
					u \in \mathbb{T}^K \\
					0 \le \tau_u^d \le t }}  \phi(\tau^{d}_u,X^K_u(\tau^{d}_u-)) \int_0^t  c_K  \sum_{\stackrel{u,v \in \mathcal{N}^K_w}{u\neq v}} \phi(w,X^K_u(w)) \mathbb{1}_{X^K_u(w)\leq X^K_v(w)}dw\Bigg|\Bigg]\\
			& + \mathbb{E}\Bigg[\Bigg(\int_0^t  2c_K  \sum_{\stackrel{u,v \in \mathcal{N}^K_w}{u\neq v}} \phi(w,X^K_u(w)) \mathbb{1}_{X^K_u(w)\leq X^K_v(w)}dw\Bigg)^2\Bigg]
		\end{split} \quad.
	\end{equation}
	By the coupling described in Subsection \ref{subsec: coupling}, we have $\big|\mathcal{M}_t\big|\geq \#\big\{u\in\mathbb{T}^K:0< \tau_u^d \leq t\big\}$  for $t\geq 0$.  Therefore, for the first term in \eqref{eq: quadvarD} we obtain 
	\[\begin{split}
		\mathbb{E}\Bigg[\Bigg(\sum_{\substack{
				u \in \mathbb{T}^K \\
				0 \le \tau_u^d \le t
		}} \phi(\tau^{d}_u,X^K_u(\tau^{d}_u-))\Bigg)^2\Bigg] \leq \|\phi\|^2_{\infty}\mathbb{E}\Big(\big|u\in\mathbb{T}^K:0< \tau_u^d \leq t\big|^2\Big)\leq\|\phi\|^2_{\infty}\mathbb{E}\big({\big|\mathcal{M}_t\big|}^2\big).
	\end{split}\]
	For the second term, we have a similar domination: $\big|\mathcal{M}_t\big| \geq |\mathcal{N}^K_w|$ for $t\geq w$, and hence
	\[
	\begin{split}
		&4\mathbb{E}\Bigg[\Bigg|\sum_{\substack{
				u \in \mathbb{T}^K \\
				0 \le \tau_u^d \le t
		}}  \phi(\tau^{d}_u,X^K_u(\tau^{d}_u-)) \int_0^t  c_K  \sum_{\stackrel{u,v \in \mathcal{N}^K_w}{u\neq v}} \phi(w,X^K_u(w)) \mathbb{1}_{X^K_u(w)\leq X^K_v(w)}dw\Bigg|\Bigg]\\
		&\leq 4c_K\|\phi\|^2_{\infty}\mathbb{E}\Big[\big|u\in\mathbb{T}^K:0< \tau_u^d \leq t\big|\int_0^t|\mathcal{N}^K_w|(|\mathcal{N}^K_w|-1)dw\Big]\leq 4tc_K\|\phi\|^2_{\infty}\mathbb{E}\Big({\left|\mathcal{M}_t\right|}^3\Big)
	\end{split} \quad;
	\]
	and similarly for the third term
	\[
	\mathbb{E}\Bigg[\Bigg(\int_0^t  2c_K  \sum_{\stackrel{u,v \in \mathcal{N}^K_w}{u\neq v}} \phi(w,X^K_u(w)) \mathbb{1}_{X^K_u(w)\leq X^K_v(w)}dw\Bigg)^2\Bigg]
	\leq 4t^2c^2_K\|\phi\|^2_{\infty}\mathbb{E}\big({\big|\mathcal{M}_t\big|}^4\big) \,.
	\]
	By Lemma \ref{lem: order}, for $n \in\{ 2,3,4\}$, the moment $\mathbb{E}\big({\big|\mathcal{M}_t\big|}^n\big)$ is of the order $O(K^n)$. Consequently, for $K$ sufficiently large
	\[\mathbb{E}\big[(D^{K}_t)^2\big]\leq m_K^{-2}\|\phi\|^2_{\infty}\Big(\mathbb{E}\big[{\left|\mathcal{M}_t\right|}^2\big]+ 4tc_K\mathbb{E}\big[{\big|\mathcal{M}_t\big|}^3\big] + 4t^2c^2_K\mathbb{E}\big[{\big|\mathcal{M}_t\big|}^4\big]\Big)\leq \|\phi\|^2_{\infty} C,\]
	where $C:=C(t,c)$. So, $\sup_{t\leq T}\mathbb{E}\big[(D^{K}_t)^2\big]\leq\|\phi\|^2_{\infty}C(T,c)$.
	We have shown that $(D_t^K)$ is a square-integrable martingale with $D_0^K = 0$.  Now, since the integral part of $D^K_t$ has finite variation  its contribution to the quadratic variation vanishes. Therefore, the quadratic variation $[D^K]_t$ comes entirely from the jump part of the process, and in particular,
	\[ [D^K]_t = \frac{1}{m^2_K}\sum_{\substack{
			u \in \mathbb{T}^K \\
			0 \le \tau_u^d \le t
	}}  \phi^2(\tau^{d}_u,X^K_u(\tau^{d}_u-)).\]
	Therefore, using a similar approach as for proving that $D^K_t$ is a martingale, we can prove that 
	\[[D^K]_t - \int_0^t  \frac{2c_K}{m^2_K}  \sum_{u \in \mathcal{N}^K_w} \phi^2(w,X^K_u(w))\Bigg(\sum_{v \in \mathcal{N}^K_w}\mathbb{1}_{X^K_u(w)\leq X^K_v(w)}-1\Bigg)dw\]
	is a martingale, and because the integral in the expression above is predictable, the Doob–Meyer decomposition ensures that this integral is the predictable quadratic variation, that is
	\[\left<D^K\right>_t = \int_0^t  \frac{2c_K}{m^2_K}  \sum_{u \in \mathcal{N}^K_w} \phi^2(w,X^K_u(w))\Bigg(\sum_{v \in \mathcal{N}^K_w}\mathbb{1}_{X^K_u(w)\leq X^K_v(w)}-1\Bigg)dw.\]
	Hence, using the fact that $\mathbb{E}[(D_t^K)^2] = \mathbb{E}\big(\big<D^K
	\big>_t\big)$ it follows that
	\[
	\begin{split}
		\mathbb{E}[(D_t^K)^2] &= \mathbb{E}\Bigg[\int_0^t  \frac{2c_K}{m^2_K}  \sum_{u \in \mathcal{N}^K_w} \phi^2(w,X^K_u(w))\Bigg(\sum_{v \in \mathcal{N}^K_w}\mathbb{1}_{X^K_u(w)\leq X^K_v(w)}-1\Bigg)dw\Bigg]\\
		& \leq \frac{2tc_K}{m^2_K}\|\phi\|^2_{\infty}\mathbb{E}\big(\big|\mathcal{M}_t\big|^2\big)\leq c_K\|\phi\|^2_{\infty}\widetilde{C}\longrightarrow 0 \qquad \text{as} \quad K\longrightarrow 0,
	\end{split}
	\]
	for some constant $\widetilde{C}$. Then $D_t^K \to 0$ in $L^2$. 
	
	An analogous argument applies to $B_t^K$. In particular, its predictable quadratic variation is given by
	\[\big<B^K\big>_t = \int_0^t \frac{1}{m_K^2}\sum_{u \in \mathcal{N}^K_s}\phi^2(s,X^K_u(s))ds.\]
\end{proof}

\section{Proof of Lemma \ref{lem: vagueapprox}}\label{appendix: vagueapprox}
\noindent
For item (a) in Lemma \ref{lem: vagueapprox}, a similar property was studied in \cite[Proposition $2.7$]{Meleard2015} for the process $(N_t)_{t\geq 0}$ (without rescaling). However, for each $K\in\mathbb{N}$, a straightforward adaptation suffices for the rescaled process $(K^{-1}N_t^K)_{t\geq 0}$. For items (b) and (c), the proof follows closely the arguments of \cite{Fontbona2013} and \cite{Jourdain2011},
with only minor modifications. We include it here for completeness. 
\begin{itemize}
	\item[a)] For a stopping time $T_n = \inf\{t>0: K^{-1}N_t^K \geq n\}$ there exists a constant $C$ that does not depend on $n$ such that  
	\[\mathbb{E}\Big(\sup_{t\leq T}K^{-p}\big(N_{t\wedge T_n}^K\big)^p\Big)\leq \mathbb{E}\big(K^{-p}\big(N_0^K\big)^p\big) +C\int_0^T \mathbb{E}\Big(\sup_{s\in[0,t\wedge T_n]}K^{-p}\big(N^K_s\big)^p\Big)dt\]
	using Gronwall's Lemma, there exists a constant $\tilde{C}:= \tilde{C}(p,T) $ such that 
	\[\mathbb{E}\Big(\sup_{t\leq T\wedge T_n}K^{-p}\big(N_{t}^K\big)^p\Big)\leq \tilde{C}\mathbb{E}\big(K^{-p}\big(N_0^K\big)^p\big) \]
	As the sequence $(T_n)_n$ tend to infinite a.s., then taking the supremum over $K$ we obtain the desired result.
	\item[b)] The selection of the function $\psi$ ensures that 
	$\psi_0 = 1$, and for $n> 1$,
	$\psi_n = 0$ in $[-n+1,n-1]$ and  $\psi_n = 1$ in $[-n,n]^c$ and also, that both $(\psi_n)_{xx}$ and $(\psi_n)_{x}$ are bounded. Therefore, there exists a constant $C':=C'(K,c)$  such that
	\begin{equation}
		\begin{split}
			\big< \psi_n, \mu^{K}_t\big>   \leq \big< \psi_n, \mu^{K}_0\big> +  C' \int_{0}^{t}   K^{-1}N^K_s+ K^{-2}(N^K_s)^2ds +\int_0^t C'\big<\psi_n,\mu^K_s\big>ds  +  L^{K,\psi_n}_t
		\end{split}
	\end{equation}
	and 
	\begin{equation}
		\begin{split}
			\big< L^{K,\psi_n}\big>_t\leq T C' \Big( \sup_{t\leq T}K^{-1}N^K_s+\sup_{t\leq T}K^{-2}(N^K_s)^2\Big) 
		\end{split}
	\end{equation}
	The constant $C'$ is decreasing in $K$, and $\sup_K C'(K,c)<\infty$, so we can bound it by a constant $C'$ that does not depend on $K$.
	As a consequence of the Buckholder-Davis-Gundy inequality and \cite[Item ($4.b$'), Table $4.1$, p. $162$]{Barlow1986}
	we have that there exists constants $C''$ and $\tilde{C}''$ such that 
	\[\mathbb{E}\Big(\sup_{t\leq T}L^{K,\psi_n}_t\Big)\leq C''\mathbb{E}\Big([L^{K,\psi_n}]^{1/2}_T\Big)\leq \tilde{C}''\mathbb{E}\Big(\big<L^{K,\psi_n}\big>^{1/2}_T\Big)\]
	By part $a)$, there exists a constant $C$ such that
	\begin{equation}
		\begin{split}
			\mathbb{E}\Big(\sup_{t\leq T}\big< \psi_n, \mu^{K}_t\big>\Big)  
			\leq \big< \psi_n, \mu^{K}_0\big>  +C\int_0^T \mathbb{E}\Big(\sup_{s\leq t}\big<\psi_n,\mu^K_s\big>\Big)dt  
			+ CT \mathbb{E}\big(1+K^{-2}(N^K_0)^2\big)
		\end{split}
	\end{equation}
	Consequently, applying Gronwall’s lemma yields the existence of a constant $\tilde{C}$ depending on  $\sup_K \mathbb{E}\big(K^{-2}(N^K_0)^2\big)$ and on $T$ such that 
	\[\sup_K\mathbb{E}\Big(\sup_{t\leq T}\big< \psi_n, \mu^{K}_t\big>\Big)  \leq \tilde{C} \sup_K\mathbb{E}\big(\big< \psi_n, \mu^{K}_0\big>\big) \]
	We know that $\big<\psi_n, \mu^K_0\big>$ converges in law to $\big<\psi_n, \mu_0\big>$ as $K\to \infty$ and also, by part $a)$, it is uniformly integrable, therefore $\mathbb{E}\big(\big< \psi_n, \mu^{K}_0\big>\big)$ converges to $\big<\psi_n, \mu_0\big>$ as $K\to \infty$. Taking the limit as $n\to \infty$, we can conclude.
	\item[c)] For each $n,l\in\mathbb{N}$ define the function $\psi_{n,l}\in C^2_c({\mathbb{R}})$ such that   $\psi_{n,l}:=\psi_n(1-\psi_l)$. We know that $\psi_{n,l}\leq \psi_n\leq 1$, therefore by part $a)$ we can ensure that the sequence $\sup_{t\leq T}\big<\phi_{n,l},\mu_t\big>$ is uniform integrable. Hence, given the continuity of the map $\nu\to \sup_{t\leq T}\big<\psi_{n,l}, \nu_t\big>$ in the space $D([0,T], M^v_F(\mathbb{R}))$ we have
	\[\lim_{K\to \infty} \mathbb{E}\Big(\sup_{t\leq T}\big<\psi_{n,l}, \mu^K_t\big>\Big) = \mathbb{E}\Big(\sup_{t\leq T}\big<\psi_{n,l},\mu_t \big>\Big)\]
	taking limit in $l\to \infty$, it yields
	\[\mathbb{E}\Big(\sup_{t\leq T}\big<\psi_{n}, \mu_t\big>\Big) \leq \lim_{K\to \infty} \mathbb{E}\Big(\sup_{t\leq T}\big<\psi_{n}, \mu^K_t\big>\Big) \]
	Now we can take the limit in $n$ and due to part $b)$ we can conclude that 
	\[\lim_{n\to \infty}\mathbb{E}\Big(\sup_{t\leq T}\big<\psi_{n}, \mu_t \big>\Big) = 0 \]
\end{itemize} 

\section{Proof of Lemma \ref{lem3}}\label{appendix: lem3}
The following is a well‑known result that will be used in the proof of Lemma \ref{lem3}. 
\begin{lemma}[Many-to-One]\label{lem: manytone}
	For any $t\geq 0$, let $F:C([0,t],\mathbb{R})\to\mathbb{R}$ be a measurable function and $(B(s),s\geq 0)$ denote a standard Brownian motion. Then, if $\boldsymbol{X}(t): = \{X_u(t), u\in\mathcal{M}_t\}$ is a BBM starting with $N$ particles located at the origin, it holds that
	\[\mathbb{E}\left(\sum_{u\in\mathcal{M}_t}F(X_u(s), 0\leq s\leq t)\right) = \mathbb{E}(|\mathcal{M}_t|)\mathbb{E}(F(B(s),0\leq s\leq t))\]
\end{lemma}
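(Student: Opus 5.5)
The statement to prove is the many-to-one formula (Lemma~\ref{lem: manytone}). The plan is to reduce to a single BBM started from one particle, since the $N$ initial particles generate independent copies. Conditioning on $N$ and using linearity,
\[
\mathbb{E}\Big(\sum_{u\in\mathcal{M}_t}F\Big)=\mathbb{E}\Big(\sum_{n=1}^{N}\mathbb{E}\Big[\sum_{u\in\mathcal{N}^n_t}F\Big]\Big)
\]
and $\mathbb{E}(|\mathcal{M}_t|)=\mathbb{E}(N)e^t$. So it suffices to show, for one dyadic BBM started at $0$,
\[
\mathbb{E}\Big[\sum_{u\in\mathcal{N}_t}F(X_u(s),0\le s\le t)\Big]=e^t\,\mathbb{E}[F(B)].
\]
Here $F$ is taken nonnegative first; the general case follows by splitting $F=F^+-F^-$, with integrability assumed.

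The key step uses the construction of Subsection~\ref{subsec: coupling}. The genealogical tree $\mathcal{T}$, that is the branching times, is independent of the family $\{B_u\}_{u\in\mathcal{U}}$ of Brownian increments. Fix a realization of the tree and a particle $u\in\mathcal{N}_t$. By \eqref{eq: position}, the path $(X_u(s))_{s\le t}$, including ancestral segments under the stated convention, is the concatenation of independent Brownian pieces $B_{u|k}$ over the successive intervals $[\tau^b_{u|k},\tau^b_{u|k+1})$. The deterministic times are fixed by the conditioning, and the final piece is $B_u$ on $[\tau^b_u,t]$. By the independent-increments and Markov property of Brownian motion, this concatenation is a standard Brownian motion on $[0,t]$. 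Hence
\[
\mathbb{E}\big[F(X_u(\cdot))\,\big|\,\mathcal{T}\big]=\mathbb{E}[F(B)]
\]
for each $u\in\mathcal{N}_t$.

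Summing over $u$, which is a countable sum of nonnegative terms justified by Tonelli, gives
\[
\mathbb{E}\Big[\sum_{u\in\mathcal{N}_t}F\,\Big|\,\mathcal{T}\Big]=|\mathcal{N}_t|\,\mathbb{E}[F(B)].
\]
Taking expectations and using $\mathbb{E}|\mathcal{N}_t|=e^t$ (the Yule process, geometric with parameter $e^{-t}$) completes the one-particle case. Combining with the reduction above then yields the stated identity.

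The only delicate point is the claim that the spliced path is Brownian. To verify it, I would check the finite-dimensional distributions: for times $0\le s_1<\dots<s_k\le t$, the increments of $X_u$ decompose into sums of increments of distinct independent $B_{u|j}$ over disjoint intervals. These are therefore independent Gaussians with variances equal to the interval lengths. Continuity of the spliced path at the branch points also holds, since each piece starts from its parent's position at the birth time, so the path lies in $C([0,t],\mathbb{R})$ and $F$ is evaluated legitimately.
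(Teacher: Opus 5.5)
The paper does not prove this lemma. It appears in Appendix~\ref{appendix: lem3} as a well-known result and is used directly, with $F$ an indicator, in the proof of Lemma~\ref{lem3}. So there is no route of the paper's to compare against.

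Your argument is the standard proof, and it is correct. Conditioning first on $N$ and then on the genealogy $\mathcal{T}$ is legitimate because, in the construction of Subsection~\ref{subsec: coupling}, both the number of initial trees and the branching times are independent of the family $\{B_u\}_{u\in\mathcal{U}}$. Given $\mathcal{T}$, each ancestral path is a splice of independent Brownian pieces at deterministic times, so it is a standard Brownian motion on $[0,t]$, regardless of whether the parent keeps its label after giving birth. Writing the sum as $\sum_{u\in\mathcal{U}}\mathbb{1}_{\{u\in\mathcal{N}_t\}}F(X_u)$ over the deterministic countable label set makes the Tonelli step clean. This gives $|\mathcal{N}_t|\,\mathbb{E}[F(B)]$, hence $e^t\,\mathbb{E}[F(B)]$ per tree, and $\mathbb{E}(N)e^t\,\mathbb{E}[F(B)]$ overall.

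Two small points are worth making explicit. First, the statement as written tacitly requires $F\ge 0$ or $\mathbb{E}|F(B)|<\infty$. Your restriction is exactly what is needed, and it covers the paper's only use. Second, in your finite-dimensional check, increments of $X_u$ over different intervals $[s_i,s_{i+1}]$ can involve the same piece $B_{u|j}$, but over disjoint time sub-intervals. Joint independence therefore comes from the independent increments of each $B_{u|j}$ together with mutual independence of the family, not from the pieces being distinct.
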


\begin{proof}[Proof of Lemma \ref{lem3}]
	As $\gamma \geq 0$, it holds that 
	$\mathbb{E}_{(0)}(\gamma) = \int_0^{\infty} \mathbb{P}_{(0)}(\gamma \geq x) dx$. Then, in order to prove that $\mathbb{E}_{(0)}(\gamma) $ is finite it suffices to show that the function $f: \mathbb{R}^{+}\rightarrow [0,1]$ given by $f(x)= \mathbb{P}_{(0)}(\gamma \geq x)$ is integrable. 
	For $x>0$, we have 
	\begin{equation}\label{eq: lem1}
		\begin{split}
			\mathbb{P}_{(0)}\big(\gamma \geq x\big) &= \mathbb{P}_{(0)}\big(\gamma \geq x, T > \sqrt{x}\big) + \mathbb{P}_{(0)}\big(\gamma \geq x, T \leq \sqrt{x}\big)\\
			& \leq \mathbb{P}_{(0)}\big(T > \sqrt{x}\big) + \mathbb{P}_{(0)}\Bigg(\exists t\in[0,\sqrt{x}]:\Big|\max_{u\in \mathcal{N}^c_t} X^c_u(t)\Big| \geq x\Bigg).
		\end{split}
	\end{equation}
	
	We claim that $T$ has finite $k$-moment for $k\in\mathbb{N}$. Indeed, by the Markov property of the process $(N_t^c)$, we can prove that there exists a constant $K \in (0,1)$ (that depends only on $c$) such that $\mathbb{P}_{(0)}(T \geq 2n) \leq \left(1-K\right)^n$ and from here, for any $k \in \mathbb{N}$,
	\begin{equation*}
		\mathbb{E}\big(T^k\big) \leq 	 k \sum_{n=1}^{\infty} \int_{0}^{\infty}(2n)^{k-1} \mathbb{P}(T \geq 2(n-1) ) \mathbb{1}_{\{x\in[2(n-1),2n) \}}dx = k \sum_{n=1}^{\infty} (2n)^{k-1} \left(1-K\right)^{n-1} <\infty
	\end{equation*}
	and therefore the function $g(x) = \mathbb{P}(T > \sqrt{x})$  is integrable. By the coupling defined in Subsection \ref{subsec: coupling}, the following inequality holds for the second term of the sum in the last line of \eqref{eq: lem1}:
	\begin{equation}\label{eq: C.2}
		\mathbb{P}_{(0)}\Big(\exists t\in[0,\sqrt{x}]:\Big|\max_{u\in \mathcal{N}^c_t} X^c_u(t)\Big| \geq x\Big) \leq \mathbb{P}_{(0)}\Big(\exists t\in[0,\sqrt{x}]:\Big|\max_{u\in \mathcal{M}_t} X_u(t)\Big| \geq x\Big). 
	\end{equation} 
	We have that
	\[
	\Big\{\exists t\in[0,\sqrt{x}]:|\max_{u\in \mathcal{M}_t} X_u(t)| \geq x\Big\} \subset \Big\{\exists t\in[0,\sqrt{x}]:\max_{u\in \mathcal{M}_t} X_u(t) \geq x \ \text{or} \ \max_{u\in \mathcal{M}_t} X_u(t) \leq -x \Big\},
	\]
	and we know that if $\max_{u\in\mathcal{M}_t} X_u \leq -x$ then $\min_{u\in\mathcal{M}_t} X_u \leq -x$.  Therefore $-\min_{u\in\mathcal{M}_t} X_u \geq x$, so
	\begin{equation}\label{eq: inclusion}
		\begin{split}
			\Big\{\exists t\in[0,\sqrt{x}]: \max_{u\in \mathcal{M}_t} X_u(t) \leq -x \Big\} & \subset
			\Big\{\exists t\in[0,\sqrt{x}]: -\min_{u\in \mathcal{M}_t} X_u(t) \geq x \Big\} \\
			&= \Big\{\exists t\in[0,\sqrt{x}]: \max_{u\in \mathcal{M}_t} (-X_u(t)) \geq x \Big\}
		\end{split} \quad.
	\end{equation} 
	Given that the ancestor of particle $u \in \mathcal{M}_t$ is at the origin at time $0$, we can use \eqref{eq: inclusion} and the symmetric property of standard Brownian motion to obtain that
	\begin{equation}
		\begin{split}
			\mathbb{P}\Big(\exists t\in[0,\sqrt{x}]:\Big|\max_{u\in \mathcal{M}_t} X_u(t)\Big| \geq x\Big)&\leq \mathbb{P}\Big(\exists t\in[0,\sqrt{x}]:\max_{u\in \mathcal{M}_t} X_u(t) \geq x \ \text{or} \ \max_{u\in \mathcal{M}_t} (-X_u(t)) \geq x \Big)\\
			&\leq 2 \mathbb{P}\Big(\exists t\in[0,\sqrt{x}]:\max_{u\in \mathcal{M}_t} X_u(t) \geq x\Big) .
		\end{split}
	\end{equation} 
	
	We have $\{\exists t\in[0,\sqrt{x}]:\max_{u\in \mathcal{M}_t} X_u(t) \geq x\} = \{\exists t\in[0,\sqrt{x}]: \exists u\in \mathcal{M}_t \ \text{such that} \  X_u(t) \geq x\}$.  Assume at time $t = \sqrt{x}$ the system has $\big| \mathcal{M}_{\sqrt{x}}\big|$ particles then for $t\in[0,\sqrt{x}],  \big| \mathcal{M}_{t}\big| \leq \big| \mathcal{M}_{\sqrt{x}}\big|$. Then we have that  $\{\exists t\in[0,\sqrt{x}]: \exists u\in \mathcal{M}_t \ \text{such that} \  X_u(t) \geq x\}  = \bigcup_{ u\in \mathcal{M}_{\sqrt{x}}} \{\exists t\in[0,\sqrt{x}]:   X_u(t) \geq x\}$. That implies 
	\begin{equation}
		\begin{split}
			2 \mathbb{P}_{(0)}\Big(\exists t\in[0,\sqrt{x}]:\max_{u\in \mathcal{M}_t} X_u(t) \geq x\Big) & = 2 \mathbb{P}_{(0)}\Big(\bigcup_{ u\in \mathcal{M}_{\sqrt{x}}} \{\exists t\in[0,\sqrt{x}]:   X_u(t) \geq x\}\Big)\\
			&=2  \mathbb{E}_{(0)} \Big(\mathbb{1}_{\bigcup_{ u\in \mathcal{M}_{\sqrt{x}}} \{\exists t\in[0,\sqrt{x}]:   X_u(t) \geq x\}}\Big)\\
			& \leq 2 \mathbb{E}_{(0)} \Bigg(\sum_{u\in \mathcal{M}_{\sqrt{x}}}\mathbb{1}_{\{ \exists t\in[0,\sqrt{x}]: X_u(t) \geq x\}}\Bigg)
		\end{split}		 
	\end{equation} 
	We can use now Lemma \ref{lem: manytone} to obtain
	\begin{equation*}
		2 \mathbb{E}_{(0)} \Bigg(\sum_{u\in \mathcal{M}_{\sqrt{x}}}\mathbb{1}_{\{ \exists t\in[0,\sqrt{x}]: X_v(t) \geq x\}}\Bigg) = 2 \mathbb{E}_{(0)}\Big(| \mathcal{M}_{\sqrt{x}}|\Big) \mathbb{P}\Big(\exists t\in[0,\sqrt{x}]: B(t) \geq x\Big)
	\end{equation*}
	where $B(t)$ is a Brownian motion and  $\mathbb{E}(| \mathcal{M}_{\sqrt{x}}|) = \mathbb{E}(N^c_0)e^{\sqrt{x}}$, with $\mathbb{E}(N^c_0)$ finite. We know that $\{\exists t\in[0,\sqrt{x}]: B(t) \geq x\} = \{\max_{t\in[0,\sqrt{x}]} B(t)\geq x \}$ and using the Reflection Principle we obtain that
	\[
	2 \mathbb{E}_{(0)} \Bigg(\sum_{u\in \mathcal{M}_{\sqrt{x}}}\mathbb{1}_{\{ \exists t\in[0,\sqrt{x}]: X_v(t) \geq x\}}\Bigg) = 4\mathbb{E}(N^c_0) e^{\sqrt{x}} \mathbb{P}(B(\sqrt{x})\geq x).
	\]
	We can bound the tail distribution of a Gaussian variable by 
	\begin{equation*}
		4 \mathbb{E}(N^c_0)e^{\sqrt{x}} \mathbb{P}\left(B(\sqrt{x})\geq x\right)
		=  4 \mathbb{E}(N^c_0)e^{\sqrt{x}} \mathbb{P}\left(x^{1/4}\text{N}(0,1)\geq x\right) \leq  \mathbb{E}(N^c_0)\frac{\sqrt{2}e^{\sqrt{x}-x^{3/2}/2}}{\sqrt{\pi}x^{3/4}},
	\end{equation*}
	and we know that for large $x$, the term $x^{3/2}/2$ dominates $\sqrt{x}$, therefore the term $\exp\{\sqrt{x}-x^{3/2}/2\}$ will decay rapidly, at the same time it will decay faster than any polynomial, which means that effectively this term is integrable as a function of $x$, and  we then deduce the desired result.
\end{proof}
\section{Proof of Lemma \ref{lem: empmsreq}}\label{appendix: empmsreq}
\begin{proof}
	We will analyze the derivative
	\[\partial_t\mathbb{E}_{\psi_{\pi}}\left(\widetilde{F}^K(t,x)\right)
	:= \lim_{h \to 0} \frac{\Delta_h \widetilde{F}^K(t,x)}{h}= 
	\lim_{h \to 0} \frac{1}{h}\mathbb{E}_{\psi_{\pi}}\left(\widetilde{F}^K(t+h,x) - \widetilde{F}^K(t,x)\right)  ,\]
	Note that in the interval $[t,t+h]$, we have three main events: nor branching or competition event;  a single branching event; or a single competition event. Any other outcome is of the order $O(h^2)$. Next, we will analyze each one of these main scenarios.
	\begin{itemize}
		\item \textbf{Nor branching or competition event.}  In this case, we have that 
		\begin{equation}\label{6.4}
			\Delta_h \widetilde{F}^K(t,x)\mathbb{1}_{\text{no event}}= \frac{1}{N_t^K}\sum_{u \in \mathcal{N}^K_{t}}\Big(\mathbb{1}_{\{X^K_u(t+h)\leq x\}} - \mathbb{1}_{\{X^K_u(t)\leq x\}}\Big)
		\end{equation}
		and from here, using that $\mathbb{P}_{\psi_{\pi}}\big(N_t^K =N\big)=\pi(N)$, we have that 
		\begin{equation*}\label{eq: Deltahfirstterm}
			\begin{split}   	
				\lim_{h\to 0}\frac{1}{h}\mathbb{E}_{\psi_{\pi}}\Big[\Delta_h \widetilde{F}^K(t,x)\mathbb{1}_{\text{no event}} \Big]
				&= \sum\limits_{N=1}^{\infty}\frac{1}{N}\sum_{i=1}^{N}\lim_{h\to 0}\frac{\mathbb{P}_{\psi_{\pi}}(X_{u_i}(t+h)<x)- \mathbb{P}_{\psi_{\pi}}(X_{u_i}(t)<x)}{h}\pi(N)\\
				& = \sum\limits_{N=1}^{\infty}\frac{1}{N}\sum_{i=1}^{N} \partial_t\mathbb{P}_{\psi_{\pi}}(X_u(t)<x)\pi(N)\\
				&=\frac{1}{2}\partial_{xx}\mathbb{E}_{\psi_{\pi}}\left[\frac{1}{N^K_t}\sum_{u \in \mathcal{N}^K_{t}}\mathbb{1}_{\{X_u(t)<x\}}\right]
			\end{split},
		\end{equation*}
		where the last equality holds  as  each particle $u$ moves as a Brownian motion.
		\item \textbf{A single branching event.} Denote by $B$ the event of a single branching on $(t,t+h]$ and $B_u$ the event where particle $u$ branches. In this case
		\[\begin{split}
			\mathbb{E}_{\psi_{\pi}}\Big[\Delta_h \widetilde{F}^K(t,x)\mathbb{1}_{B}\Big] &= \sum_{N=1}^{\infty}\sum_{i\geq 1}^N\mathbb{E}_{\psi_{\pi}}\Big[\Delta_h \widetilde{F}^K(t,x)\mathbb{1}_{B_{u_i}}\Big]\pi(N) \\
			& = \sum_{N=1}^{\infty}\sum_{i\geq 1}^N\mathbb{E}_{\psi_{\pi}}\Big[\Delta_h \widetilde{F}^K(t,x)\Big|B_{u_i}\Big]\mathbb{P}_{\psi_{\pi}}(B_{u_i})\pi(N)
		\end{split}\]
		where $\mathbb{P}_{\psi_{\pi}}(B_{u_i}) = h + O(h^2)$. The spatial movement is independent of the reproduction event, so
		\begin{equation*}
			\begin{split}
				\sum_{i\geq 1}^N\mathbb{E}_{\psi_{\pi}}\Big[\Delta_h \widetilde{F}^K(t,x)|B_{u_i}\Big]
				& = \sum_{i\geq 1}^N\mathbb{E}_{\psi_{\pi}}\Bigg[\frac{1}{N+1}\sum_{j=1}^{N}\mathbb{1}_{\{X^K_{u_j}(t+h)<x\}} - \frac{1}{N}\sum_{j=1}^{N}\mathbb{1}_{\{X^K_{u_j}(t)<x\}} \\ 
				& +\frac{1}{N+1}\mathbb{1}_{\{X^K_{\widetilde{u}_i}(t+h)<x\}} \Big|B_{u_i}\Bigg] \\
				&=\frac{N}{N+1}\mathbb{E}_{\psi_{\pi}}\Bigg[\sum_{j=1}^{N}\mathbb{1}_{\{X^K_{u_j}(t+h)<x\}}\Bigg] - \mathbb{E}_{\psi_{\pi}}\Bigg[\sum_{j=1}^{N}\mathbb{1}_{\{X^K_{u_j}(t)<x\}}\Bigg] \\ 
				& +\sum_{i\geq 1}^N\mathbb{E}_{\psi_{\pi}}\Bigg[\frac{1}{N+1}\mathbb{1}_{\{X^K_{u_i}(t+h)<x\}} \Big|B_{u_i}\Bigg]
			\end{split},
		\end{equation*}
		where in the last equality we use that $X^K_{\tilde{u}_i}(t+h)\overset{d}{=}X^K_{u_i}(t+h)$ when $\tilde{u}$ is a descendant of individual $u$, born in $(t, t+h]$. Hence, 
		\begin{equation*}
			\begin{split}
				\sum_{i\geq 1}^N\mathbb{E}_{\psi_{\pi}}\Big[\Delta_h \widetilde{F}^K(t,x)|B_{u_i}\Big]
				&=\frac{N}{N+1}\mathbb{E}_{\psi_{\pi}}\Bigg[\sum_{j=1}^{N}\mathbb{1}_{\{X^K_{u_j}(t+h)<x\}}\Bigg] - \mathbb{E}_{\psi_{\pi}}\Bigg[\sum_{j=1}^{N}\mathbb{1}_{\{X^K_{u_j}(t)<x\}}\Bigg] \\ 
				& +\frac{1}{N+1}\mathbb{E}_{\psi_{\pi}}\Bigg[\sum_{i\geq 1}^N\mathbb{1}_{\{X^K_{u_i}(t+h)<x\}}\Bigg]\\
				&=\mathbb{E}_{\psi_{\pi}}\Bigg[\sum_{j=1}^{N}\mathbb{1}_{\{X^K_{u_j}(t+h)<x\}}\Bigg] - \mathbb{E}_{\psi_{\pi}}\Bigg[\sum_{j=1}^{N}\mathbb{1}_{\{X^K_{u_j}(t)<x\}}\Bigg]
			\end{split}.
		\end{equation*}
		Therefore, we have that
		\begin{equation}\label{6.5}
			\begin{split}
				\frac{1}{h}\mathbb{E}_{\psi_{\pi}}\Big[\Delta_h \widetilde{F}^K(t,x)\mathbb{1}_{B} \Big] &= \frac{1}{h}\sum_{N=1}^{\infty}\sum_{i= 1}^N\mathbb{E}_{\psi_{\pi}}\Big[\mathbb{1}_{\{X^K_u(t+h)\leq x\}} - \mathbb{1}_{\{X^K_u(t)\leq x\}}\Big]\Big(h + O(h^2)\Big)\pi(N)
			\end{split}
		\end{equation}
		which vanishes  as $h$ approaches to zero.
		\item \textbf{A single competition event.} Denote by $C$ the event of a single competition on $(t,t+h]$ and $C_{u,v}$ the event where particle $u$ and $v$ competes. In this case, 
		\[\begin{split}
			\mathbb{E}_{\psi_{\pi}}\Big[\Delta_h \widetilde{F}^K(t,x)\mathbb{1}_{C}\Big] &= \sum_{N=1}^{\infty}\sum_{j,w: w\neq j}^N\mathbb{E}_{\psi_{\pi}}\Big[\Delta_h \widetilde{F}^K(t,x)\mathbb{1}_{C_{u_w,u_j}}\Big]\pi(N) \\
			& = \sum_{N=1}^{\infty}\sum_{j,w: w\neq j}^N\mathbb{E}_{\psi_{\pi}}\Big[\Delta_h \widetilde{F}^K(t,x)|C_{u_w,u_j}\Big]\mathbb{P}_{\psi_{\pi}}(C_{u_w,u_j})\pi(N)
		\end{split}\]
		where $\mathbb{P}_{\psi_{\pi}}(C_{u_w,u_j}) = c_Kh +O(h^2)$
		\begin{equation*}
			\begin{aligned}
				\sum_{j,w: w\neq j}^N\mathbb{E}_{\psi_{\pi}}\Big[\Delta_h \widetilde{F}^K(t,x)&|C_{u_w,u_j}\Big]
				=	\sum_{j,w: w\neq j}^N\mathbb{E}_{\psi_{\pi}}\Bigg[\frac{1}{N-1}\sum_{i\geq 1 }^N\mathbb{1}_{\{X^K_{u_i}(t+h)<x\}}- \frac{1}{N}\sum_{i\geq 1}^N\mathbb{1}_{\{X^K_{u_i}(t)<x\}}  \\
				&- \frac{1}{N-1}\mathbb{1}_{\{X_{ k_{u_w,u_j}(\tau)}(  t+h)<x\}}\Big|C_{u_w,u_j}\Big]\\
				& =	N(N-1)\mathbb{E}_{\psi_{\pi}}\Bigg[\frac{1}{N-1}\sum_{i\geq 1 }^N\mathbb{1}_{\{X^K_{u_i}(t+h)<x\}}\Bigg]\\
				& - N(N-1)\mathbb{E}_{\psi_{\pi}}\Bigg[\frac{1}{N}\sum_{i\geq 1}^N\mathbb{1}_{\{X^K_{u_i}(t)<x\}}\Bigg]  \\
				&- \sum_{j,w: w\neq j}^N\frac{1}{N-1}\mathbb{E}_{\psi_{\pi}}\Big[\mathbb{1}_{\{X_{ k_{u_w,u_j}(\tau)}(  t+h)<x\}}\Big]\\
				& =	\mathbb{E}_{\psi_{\pi}}\Bigg[N\sum_{i\geq 1 }^N\mathbb{1}_{\{X^K_{u_i}(t+h)<x\}} \Big]- (N-1)\mathbb{E}_{\psi_{\pi}}\Bigg[\sum_{i\geq 1}^N\mathbb{1}_{\{X^K_{u_i}(t)<x\}}\Bigg]  \\
				&- \frac{1}{N-1}\mathbb{E}_{\psi_{\pi}}\Big[\sum_{j,w: w\neq j}^N\mathbb{1}_{\{X_{ k_{u_w,u_j}(\tau)}(  t+h)<x\}}\Big]
			\end{aligned}
		\end{equation*}
		\noindent
		where $t<\tau\leq t+h$ is the time of the first competition event and $k_{u_w,u_j}(\tau)$ is the label of the particles $u_w$ or $u_j$ with the smaller position at time $\tau$. Given that at this time the particle with label $k_{u_w,u_j}(\tau)$ is removed from the system, we consider $X^K_{k_{u_w,u_j}(\tau)}(t+h)$ to be the position of a copy of particle $k_{u_w,u_j}(\tau)$ that kept moving as a Brownian motion from $\tau$ to $t+h$.  For this, we have 
		\begin{equation}\label{6.6}
			\sum_{j,w: w\neq j}^N\mathbb{1}_{\{X_{k_{v_w,v_j}(\tau)}(t+h)<x\}} =   2\sum_{i,j: i\neq j}^N\mathbb{1}_{\{X^K_{u_i}(t+h)<x\}}\mathbb{1}_{\{X^K_{u_i}(\tau)<X^K_{v_j}(\tau)\}}.
		\end{equation}
		Therefore
		\[\begin{split}
			\frac{1}{h}\mathbb{E}_{\psi_{\pi}}\Big[\Delta_h \widetilde{F}^K(t,x)&\mathbb{1}_{C}\Big] = \frac{1}{h}\sum_{N=1}^{\infty}\mathbb{E}_{\psi_{\pi}}\Big[N\sum_{i\geq 1 }^N\mathbb{1}_{\{X^K_{u_i}(t+h)<x\}} - (N-1)\sum_{i\geq 1 }^N\mathbb{1}_{\{X^K_{u_i}(t)<x\}}\\
			&- \frac{2}{N-1}\sum_{i,j: i\neq j}^N\mathbb{1}_{\{X^K_{u_i}(t+h)<x\}}\mathbb{1}_{\{X^K_{u_i}(\tau)<X^K_{v_j}(\tau)\}}\Big]\Big(c_Kh +O(h^2)\Big)\pi(N)
		\end{split}\]
		so we can conclude
		\[\begin{split}
			\lim_{h\to 0}\frac{1}{h}\mathbb{E}_{\psi_{\pi}}\Big[\Delta_h \widetilde{F}^K(t,x)\mathbb{1}_{C}\Big] = \mathbb{E}_{\psi_{\pi}}\Big[&c_K\sum_{u\in\mathcal{N}^K_t}\mathbb{1}_{\{X^K_{u}(t)<x\}}\\
			&- \frac{2c_K}{N_t^K-1}\sum_{u,v\in \mathcal{N}_t^K}\mathbb{1}_{\{X^K_{u}(t)<x\}}\mathbb{1}_{\{X^K_{u}(t)<X^K_{v}(t)\}}\Big].
		\end{split}\]
	\end{itemize}
	Based in the analysis above, we obtain that 
	\begin{equation*}
		\begin{split}
			\partial_t\mathbb{E}_{\psi_{\pi}}\left(\widetilde{F}^K(t,x)\right) & = \frac{1}{2}\partial_{xx}\mathbb{E}_{\psi_{\pi}}\left(\widetilde{F}^K(t,x)\right) \\    
			& \mathbb{E}_{\psi_{\pi}}\Big[c_K\sum_{u\in\mathcal{N}^K_t}\mathbb{1}_{\{X^K_{u}(t)<x\}}- \frac{2c_K}{N_t^K-1}\sum_{u,v\in \mathcal{N}_t^K}\mathbb{1}_{\{X^K_{u}(t)<x\}}\mathbb{1}_{\{X^K_{u}(t)<X^K_{v}(t)\}}\Big]          \end{split} ,
	\end{equation*}
	and finally we can give an expression for the derivative in the integrand on \eqref{eq: dervempmean} in terms of $\widetilde{F}^K(t,x)$, namely 
	\begin{equation}\label{fini_eq}
		\partial_t\mathbb{E}_{\psi_{\pi}}(\widetilde{F}^K(t,x)) = \frac{1}{2}\partial_{xx} \mathbb{E}_{\psi_{\pi}}\big(\widetilde{F}^K(t,x))	-\mathbb{E}_{\psi_{\pi}}\Big[\frac{c_K (N_t^K)^2}{N_t^K-1}(\widetilde{F}^K(t,x) 
		- \widetilde{F}^K(t,x)^2\big)\Big],
	\end{equation}
	where we use \eqref{6.6} and the fact that
	\[ \sum_{\stackrel{u,v\in\mathcal{N}^K_t}{u\neq v}}\mathbb{1}_{\{\min(X^K_u(t),X^K_v(t))<x\}}  = 
	2(N^K_t-1)\sum_{u\in\mathcal{N}^K_t}\mathbb{1}_{\{X^K_u(t) <x\}} - \left(\sum_{u\in\mathcal{N}^K_t}\mathbb{1}_{\{X^K_u(t) <x\}}\right)^2+ \sum_{u\in\mathcal{N}^K_t}\mathbb{1}_{\{X^K_u(t) <x\}}.\]
	
\end{proof} 

		\bibliographystyle{amsplain}
		

\end{document}